\numberwithin{equation}{section}
\renewcommand{\aa}{{a_{n,1}}}
\newcommand{\bb}{{a_{n,2}}}
\newcommand\cc{{C}}
\newcommand\dd{D}
\newcommand\dv{\mathop{\rm div}}
\renewcommand{\k}{k}
\newcommand{\m}[1]{\mathbbm{#1}}
\newcommand{\q}[1]{\mathcal{#1}}
\newcommand\san{{s_{A,n}}}
\newcommand\SCinq{1}%{8}
\DeclareMathOperator{\Id}{\mathrm{Id}}
\theoremstyle{plain}
\newtheorem{thm}{Theorem}
\newtheorem*{thm*}{Theorem}
\newtheorem{propo}[thm]{Proposition}
\newtheorem{prop}{Proposition}[section]
\newtheorem{lem}[prop]{Lemma}
\newtheorem{cl}[prop]{Claim}
\theoremstyle{definition}
\theoremstyle{remark}
\newtheorem*{nb}{Remark}
\def\blfootnote{\xdef\@thefnmark{}\@footnotetext}
\title{\textbf{Behavior rigidity near non-isolated blow-up points for
  the semilinear heat equation}}
\author{Frank Merle\\
{\it \small CY Cergy Paris Universit\'e
%Universit\'e de Cergy Pontoise 
and IHES}\\
Hatem Zaag\\
%\footnote{H.Z. is supported by the ANR project ANA\'E ref. ANR-13-BS01-0010-03.}\\
{\it \small 
Universit\'e Sorbonne Paris Nord,
%Universit\'e Paris 13,
  % Sorbonne Paris Cit\'e},{\it \small
  LAGA, CNRS (UMR 7539), F-93430, Villetaneuse, France}
}
\begin{document}     
% jfa cmp imrn tams jems ajm

% cmp imrn tams jems

% cmp nakanishi, ou imrn yvan

% Le 17/2/2021, Frank confirme le choix de IMRN.

% \bigskip

% \setcounter{page}{0}

% Questions en suspens:

% \medskip

% \begin{enumerate}
%   \item Ici, on traite le cas $N=2$, $p=2$ et l'ordre du profil $m\ge
%     4$.
% \begin{enumerate}
% \item Doit-on dire quelque chose du cas $m=2$ ?
%   \item Que dire/faire du cas $N=2$ et
%     $p>1$ ?
%     \item Que dire/faire du cas $N\ge 3$ ?
%      \item Et le cas de la solution sans signe ? Il faudra en parler,
%        et dire qu'on a un crit\`ere d'explosion sans signe.
%   \end{enumerate}

 %  \item Biblio :
% \begin{enumerate}
%    \item  Il faudrait citer tes travaux avec Pierre sur la
%      chaleur ?
%    \item Justement, avec Pierre, avez-vous un exemple o\`u le profil
%      \`a la Vel\'azquez est d\'eg\'en\'er\'e et le point est isol\'e ?
%    \end{enumerate}

% \item Pour le moment, il y a 3 propositions et un corollaire dans l'intro et aucun th\'eor\`eme.

% \item Sur le tableau qui suit \eqref{taylor2-}, il faut mieux
%   expliquer pourquoi on n\'eglige les termes qui ne sont pas facteur
%   de $e^\tau$ pour la premi\`ere  colonne et $e^{\frac \tau 2}$ pour
%   les 2 autres.

%   \item Sur le tableau juste avant \eqref{c66}, il y a des $e^{2\tau}$
%     et $e^{\frac 32 \tau}$ qui apparaissent. Certes c'est formel et
%    seule la preuve rigoureuse qui importe, mais
%     c'est chelou tout de m\^eme....
%\end{enumerate}
  
\newpage

% With AMS-LaTeX, \maketitle follows the abstract
\maketitle   

\begin{abstract}
We consider the semilinear heat equation with Sobolev subcritical power nonlinearity in
dimension $N=2$, and $u(x,t)$ a solution which blows up in finite time $T$.
Given a non isolated blow-up point $a$, we assume that the Taylor
expansion of the solution near $(a,T)$ obeys some degenerate
situation labeled by some even integer $m(a)\ge 4$. If we have a
sequence $a_n \to a$
as $n\to \infty$, we show after a change of coordinates and the
extraction of a subsequence that either  $\aa-a_1 = o((a_{n,2}-a_2)^2)$ or
$|a_{n,1}-a_1||a_{n,2}-a_2|^{-\beta} |\log|a_{n,2}-a_2||^{-\alpha} \to L>
0$ for some $L>0$,
%up to extracting a subsequence still denoted the same,
where $\alpha$ and $\beta$ 
enjoy a finite number of rational values with $\beta \in(0,2]$ and $L$ is
a solution of a polynomial equation depending on the
coefficients of the Taylor expansion of the solution.
If $m(a)=4$, then $\alpha=0$ and either $\beta=3/2$ or $\beta =2$.
\end{abstract}

\medskip

{\bf MSC 2010 Classification}:  
35L05, 
%35L71, 35L67,
35K10,   	%Second-order parabolic equations
35K58,   	%Semilinear parabolic equations
35B44, 35B40

% 35B40    	Asymptotic behavior of solutions
% 35B44    	Blow-up
% 35L05    	Wave equation
% 35L67    	Shocks and singularities
% 35L71    	Semilinear second-order hyperbolic equations

\medskip

{\bf Keywords}: Semilinear heat equation, blow-up behavior, blow-up set.

% \section*{Questions pour Frank}

% - 

%%%%%%%%%%%%%%%%%%%%%%%%%%%%%%%%%%%
%%%%%%%%%%%%%%%%%%%%%%%%%%%%%%%%%%%
\section{Introduction}
%%%%%%%%%%%%%%%%%%%%%%%%%%%%%%%%%%%
%%%%%%%%%%%%%%%%%%%%%%%%%%%%%%%%%%%
We consider the semilinear heat equation in space dimension $N\ge 1$
with a Sobolev subcritical power nonlinearity:
\begin{equation}\label{equ}
\left\{
\begin{array}{l}
\partial_t u =\Delta u+|u|^{p-1}u,\\
u(0)=u_0\in L^\infty(\m R^N)
\end{array}
\right.
\end{equation}
where
%$p>1$,
\begin{equation}\label{condp}
p>1,\;\;(N-2)p<N+2.
\end{equation}
Other phenomena arise in the Sobolev critical and supercritical cases (see from
example Merle, Rapha\"el and Szeftel \cite{MRSimrn20}, Schweyer \cite{Sjfa12} and the
references therein).

\medskip

We consider $u(t):x\in{\m R}^N \rightarrow u(x,t)\in{\m R}$ 
a solution which blows up at time $T>0$ and introduce the set of its blow-up points
\[
\q S = \{a\in \m R^N\;\;|\;\;|u(a,t)|\to +\infty\mbox{ as }t\to T\}
\]
(note that $u(x,t) \to u^*(x)$ as $t\to T$ whenever $x\not \in \q
S$). In the literature, we know examples of blow-up solutions where
$\q S$ is finite or the union of some concentric spheres.
% We denote by $\q S$ the set
% of its blow-up points, where we mean by a blow-up point any $a\in \m R^N$
% such that $|u(a_n,t_n)|\to \infty$ as $n\to \infty$, for some sequence
% $(a_n,t_n)\to (a,T)$. 

\medskip

In this paper, we are interested in non-isolated blow-up points.
%in the Sobolev subcritical range \eqref{condp}.
%Note from \eqref{condp} that
%we work only in the Sobolev subcritical range.
% Other phenomena arise in the critical and supercritical cases (see from
% example Merle, Rapha\"el and Szeftel \cite{MRSimrn20}, Schweyer \cite{Sjfa12} and the
% references therein).
No results are available in this context except for curve
singularity (see Zaag \cite{Zihp02, Zcmp02, Zbeit00, Zdmj06} and
Ghoul, Nguyen and Zaag \cite{GNZans17}). Our goal is to introduce new techniques to track this
kind of question.

\medskip

When $N=1$, we know from Chen and Matano \cite{CMjde89} and Herrero and Vel\'azquez \cite{HVcpde92} that
all the blow-up points are isolated.
As we are interested in the asymptotic behavior
near non-isolated blow-up points, we need to assume that $N\ge 2$.

\medskip

Let  us note that all our statements do hold for unsigned solutions
(Proposition \ref{propexp} together with Theorems \ref{th0m} and
\ref{cor0m}). However, for simplicity, we reduce in the presentation
and the proofs to the case of nonnegative solutions,
without loss of generality. Indeed, we know from Corollary 2 page 108
in Merle and Zaag \cite{MZma00} that the solution has a constant sign
in some neighborhood of any given blow-up point, which means that in
similarity variables \eqref{defw}, the
unsigned case can be seen as a perturbation of the nonnegative case
with arbitrarily small exponential terms. Although the proof
needs a crucial blow-up criterion given below in Proposition
\ref{problo} and which
is
%seems to be
valid only for nonnegative
solutions, one should
% not worry about its validity for unsigned
keep in mind that 
%solutions, since
we have a twin version
%of that criterion
valid
for unsigned solutions
%in the general case
and given in Proposition 1.2 page 111 in \cite{MZma00}.
%See below the beginning of Section \ref{sec0} for more details.

% The reduction to
% the nonnegative case is justified also because 

\medskip

Given  $a\in \q S$, it is convenient to study the local behavior of $u(x,t)$ near $(a,T)$ in the similarity variables version $w_a(y,s)$ first introduced by Giga and Kohn in \cite{GKcpam85} by
\begin{equation}\label{defw}
w_a(y,s) = (T-t)^{\frac 1{p-1}}u(x,t) \mbox{ where } y = \frac{x-a}{\sqrt{T-t}}\mbox{ and }s=-\log(T-t).
\end{equation}
Using \eqref{equ}, we see that $w_a$ (or $w$ for short) satisfies the following PDE for all $s\ge -\log T$ and $y\in \m R^N$:
\begin{equation}\label{eqw}
\partial_s w = \Delta w-\frac 12 y \cdot \nabla w -\frac w{p-1} +|w|^{p-1}w.
\end{equation}
From Giga and Kohn \cite{GKcpam89}, we know that
%up to replacing $u$ by $-u$, we have
\begin{equation}\label{conv}
w_a(y,s) \to \kappa\equiv(p-1)^{-\frac 1{p-1}}\mbox{ as }s\to \infty,
\end{equation}
uniformly on compact sets and also in $L^2_\rho(\m R^N)$, the $L^2$ space with respect to the measure density
\begin{equation}\label{defro}
\rho(y) = \exp\left(-\frac{|y|^2}4\right)/(4\pi)^{N/2}.
\end{equation}
According to Vel\'azquez \cite{Vtams93} (see also Filippas and Kohn
\cite{FKcpam92} together with Filippas and Liu \cite{FLihp93}) , we may refine that
convergence and obtain the following first order classification:\\
%Assuming that $a$ is a non-isolated blow-up point, we know from Vel\'azquez \cite{Vtams93} that\\
- either 
\begin{equation}\label{A2}
  w_a(y,s) -\kappa\sim -\frac \kappa{4ps}\sum_{i=1}^l h_2(y_l)
%\tag{A2}
\end{equation}
where $l=1,\dots,N$, after a rotation of coordinates;\\
% (note that when $\alpha =1$, $a$ is an isolated blow-up point);\\
- or 
\begin{equation}\label{Am}
  w_a(y,s) - \kappa \sim e^{-(\frac m2-1)s}
\sum_{j_1+\dots+j_N=m}C_{m,j_2,,\dots,j_N}h_{j_1}(y_1)\dots h_{j_N}(y_N)
%\tag{Am}
\end{equation}
% \begin{equation}\label{Am}
% w_a(y,s) - 1 \sim e^{-(\frac m2-1)s}\sum_{j=0}^m C_{m,j}h_{m-j}(y_1)h_j(y_2)
% %\tag{Am}
% \end{equation}
as $s\to \infty$, for some even integer $m=m(a)\ge 4$, where $y=(y_1,\dots,y_N)$, $h_j(\xi)$ is the rescaled Hermite polynomial defined by 
\begin{equation}\label{defhj}
h_j(\xi) = \sum_{i = 0}^{\big[j/2\big]} \frac{j!}{i!(j - 2i)!}(-1)^i\xi^{j - 2i},
\end{equation}
%$m\ge 4$ is even 
and the multilinear form 
\begin{equation}\label{defmult}
  \sum_{j_1+\dots+j_N=m}C_{m,j_2,\dots,j_N}y_1^{j_1}\dots y_N^{j_N}
\end{equation}
 is non zero and nonpositive.
% and degenerate.\\
Extending the definition of $m(a)$ by $2$ if \eqref{A2} holds, Khenissy, Rebai and Zaag
called $m(a)$ the ``profile order at $a$'' in 
\cite{KRZihp11}.

\medskip

From Vel\'azquez \cite{Vcpde92}, we know that this expansion may
indicate whether $a$ is an isolated blow-up point or not. Indeed, from Theorem 2
page 1570 in \cite{Vcpde92}, we have the following:

\medskip

(i) {\it If \label{resvel} \eqref{A2} holds with $l=N$ or \eqref{Am}
  holds with the multilinear form in \eqref{defmult}
  negative, then $a$ is an isolated blow-up point}.

(ii) {\it More generally,  for any
    $\epsilon>0$, there exists $\delta>0$ such that $\q S \cap
    B(0,\delta) \subset \Omega_\epsilon$ where:\\
    - $\Omega_\epsilon\equiv
    \{y\in \m R^N\;|\;\sum_{i=1}^l y_i^2 \le  \epsilon|y|^2\}$ if \eqref{A2} holds,\\
    -     $\Omega_\epsilon \equiv \{y\in \m R^N\;|\;
    \left|\sum_{j_1+\dots+j_N=m}C_{m,j_2,\dots,j_N}y_1^{j_1}\dots y_N^{j_N}\right|
    %\left|\sum_{j=0}^{m-1}C_{m,j}y_1^{m-j}y_2^j\right|
    \le \epsilon|y|^m\}$} if \eqref{Am} holds.

\medskip

\noindent Expressing item (ii) differently, we may say that the set
$\Omega_0\cap\{|y|=1\}$ (which is finite) gives indications on the
location of neighboring blow-up points. Indeed, if $\Omega_0\cap \{|y|=1\}=\emptyset$, then
we are in Case (i), and $a$ is an isolated blow-up point ; if
$\Omega_0\cap \{|y|=1\}\neq \emptyset$, then we cannot assert whether
$a$ is isolated or not (in fact, we believe the converse of item (i) to be very
hard); if we further assume that $a$ is non isolated, then for sure the neighboring blow-up points are
located ``along'' the directions of the non-zero elements of $\Omega_0\cap \{|y|=1\}$.

\bigskip

In this paper, our first goal is to refine  the expansion
\eqref{A2}-\eqref{Am} up to the second order, and more if possible.
In fact, we don't consider the case \eqref{A2}, where more refined expansions were obtained in a
series of papers (see Zaag \cite{Zihp02, Zcmp02, Zbeit00} and Ghoul,
Nguyen and Zaag \cite{GNZans17}).
We will instead focus on the case \eqref{Am}, where no refinement is available, up
to our knowledge. In addition, the exponential decay observed in the
case \eqref{Am} is more advantageous than the polynomial interaction of
case \eqref{A2}, which allows us to better handle the interactions between
the various components of the solution in Theorem \ref{cor0m} below.
%Also we will use the exponential decay insteed of polynomial interaction to enlight decoupling (Theorem 3).

\medskip

Our first
result states that in fact such an expansion is possible up to any order:
%\textbf{\`A r\'ecrire pour tout $N\ge 2$ et $p>1$ ?}
%%%%%%%%%%%%%%%%%%%%%%%%%%%%%%%%%%%%
%%%%%%%%%%%%%%%%%%%%%%%%%%%%%%%%%%%%
\begin{propo}[Asymptotic expansion in similarity variables for $m\ge 4$]\label{propexp}
Consider $w(y,s)$ a solution to equation \eqref{eqw} defined for all
$y\in \m R^N$ and $s\ge s_0$ for some $s_0$ and assume that it
satisfies the expansion \eqref{Am} for some even integer
$m\ge 4$.
% as $s\to \infty$, in $L^2_\rho(\m R^2)$ and uniformly on compact sets,
% where  $h_j(\xi)$ is defined by \eqref{defhj}.
Then, for any integer $M\ge 2m$,
\[
  w(y,s) = \kappa+
  \sum_{
{\scriptsize \begin{array}{l}
    j=2m,\dots,M\\
    l=1,\dots,\alpha_j\\
    (i_1,\dots,i_N)\in E_{j,l}
\end{array}}
}
 e^{-\frac j2 s}s^l
    h_{i_1}(y_1)\dots h_{i_N}(y_N) +o(e^{-\frac M2s}s^{\alpha_{M}})
\]
as $s\to \infty$, uniformly on compact sets and in $L^q_\rho$ for any
$q\ge 2$, where $\alpha_j \in \m N$ and $E_{j,l}\subset \m N^N$ is finite. 
\end{propo}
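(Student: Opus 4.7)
Writing $v=w-\kappa$, equation \eqref{eqw} becomes
\begin{equation*}
\partial_s v = \mathcal{L} v + F(v),\quad \mathcal{L} v := \Delta v - \tfrac12 y\cdot \nabla v + v,\quad F(v)=(\kappa+v)^p-\kappa^p-p\kappa^{p-1}v = \tfrac{p(p-1)}{2}\kappa^{p-2}v^2 + O(v^3).
\end{equation*}
The operator $\mathcal{L}$ is self-adjoint on $L^2_\rho$ with discrete spectrum $\{\lambda_k=1-k/2\}_{k\ge 0}$ and eigenfunctions $h_\alpha(y):=h_{\alpha_1}(y_1)\cdots h_{\alpha_N}(y_N)$ for $\alpha\in\m N^N$ with $|\alpha|=k$. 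Expanding $v=\sum_\alpha v_\alpha(s) h_\alpha$ and projecting yields the infinite ODE system $\dot v_\alpha = \lambda_{|\alpha|} v_\alpha + F_\alpha(v,s)$. The hypothesis \eqref{Am} furnishes the base case: $v_\alpha(s)\sim C_\alpha e^{-(m/2-1)s}$ for $|\alpha|=m$, while the unstable and neutral modes $|\alpha|\le 2$ vanish at this order---the unstable ones through the appropriate choice of $(a,T)$, the neutral one because we are in case \eqref{Am} rather than \eqref{A2}---and all remaining modes are subdominant.

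The expansion is then refined iteratively. Given a current approximation $v^{(k)}$ already written as a finite sum of terms $e^{-(j/2)s}s^\ell h_\alpha(y)$, I substitute it into $F$; the quadratic (and higher) parts produce polynomials in $y$ with exponential-in-$s$ coefficients, which decompose in the Hermite basis through the standard multiplication rule $h_i\,h_j=\sum_k \gamma_{ijk} h_k$. This supplies explicit forcing on each mode $\alpha'$. The scalar ODE
\[
\dot v_{\alpha'} - \lambda_{|\alpha'|} v_{\alpha'} = c\, s^\ell\, e^{-\mu s}
\]
is solved explicitly: in the generic case $\mu + \lambda_{|\alpha'|}\neq 0$ the particular solution inherits the form $s^\ell e^{-\mu s}$, while in the resonant case $\mu=-\lambda_{|\alpha'|}$ an extra factor $s$ appears, giving $s^{\ell+1}e^{-\mu s}$. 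Since $F$ is quadratic near $0$ and the base decay is $e^{-(m/2-1)s}$, each iteration improves the exponential decay of the error by at least a factor $e^{-(m/2-1)s}$, so finitely many steps suffice to reach the prescribed precision $e^{-Ms/2}s^{\alpha_M}$; the index sets $E_{j,l}\subset\m N^N$ and the integers $\alpha_j$ are identified along the way.

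For the remainder, I set $r:=v-v^{(\mathrm{final})}$ and write
\[
\partial_s r = \mathcal{L} r + [F(v)-F(v^{(\mathrm{final})})] + \mathrm{src}, \qquad \mathrm{src}=o(e^{-Ms/2}s^{\alpha_M}).
\]
Projecting on Hermite modes, those with eigenvalue $\le -M/2$ inherit the source decay via the linear semigroup on $L^2_\rho$, while the finitely many modes with eigenvalue $>-M/2$ have already been absorbed into $v^{(\mathrm{final})}$; a bootstrap argument in $L^2_\rho$ closes the estimate, interpolation with the $L^\infty$ bound on $v$ (from parabolic smoothing applied to \eqref{eqw}) yields the $L^q_\rho$ bound for any $q\ge 2$, and parabolic regularity promotes this to uniform convergence on compact sets. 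The main obstacle is the combinatorial and resonance bookkeeping: in dimension $N\ge 2$ every nonlinear interaction feeds many Hermite modes simultaneously, and nested resonances compound into $s^l$ factors of growing order, so maintaining the precise structure of the expansion and correctly characterizing the sets $E_{j,l}$ and integers $\alpha_j$ throughout the induction is the technical heart of the proof.
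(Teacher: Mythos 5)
Your overall strategy matches the paper's: decompose $v=w-\kappa$ along Hermite eigenmodes of $\mathcal{L}$, project equation \eqref{eqv} mode by mode, solve the scalar ODEs (with the $s$-polynomial corrections at resonances), iterate, and bound the high-frequency remainder by a differential inequality. That is exactly the paper's scheme. However, there is a genuine gap in your treatment of the $L^q_\rho$ norms, and it matters because the argument cannot close without it.

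The iteration requires, at each step, that the current $L^2_\rho$ decay rate of $v$ be promoted to the \emph{same} decay rate in $L^4_\rho$ (more generally $L^q_\rho$), because the quadratic forcing is controlled by $\|v^2\|_{L^2_\rho}=\|v\|_{L^4_\rho}^2$. You propose to obtain the $L^q_\rho$ bound by interpolating the $L^2_\rho$ estimate with the uniform $L^\infty$ bound $\|v\|_{L^\infty}=O(1)$ coming from parabolic smoothing of \eqref{eqw}. This loses decay: from $\|v\|_{L^2_\rho}=O(e^{-s})$ and $\|v\|_{L^\infty}=O(1)$, H\"older interpolation only gives $\|v\|_{L^4_\rho}=O(e^{-s/2})$, hence $\|v^2\|_{L^2_\rho}=O(e^{-s})$, which does \emph{not} improve on the current estimate and so the iteration is stalled. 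The tool the paper uses instead is the Herrero--Vel\'azquez regularizing effect of the linear semigroup $e^{\mathcal{L}\tau}$ in the weighted spaces (Lemma \ref{lemVel}(i)): after a fixed, $s$-independent time shift $\tau^*=\tau^*(q,2)$, one has $\|e^{\mathcal{L}\tau^*}\psi\|_{L^q_\rho}\le C\|\psi\|_{L^2_\rho}$, and applying this in a Duhamel formulation (together with a cutoff of the far-field $|y|>e^{s/4}$ region, where $\rho$ is super-exponentially small) transfers the $L^2_\rho$ decay rate to $L^q_\rho$ without loss. This is not an optional regularity upgrade tacked on at the end; it must be performed at each iteration \emph{before} the quadratic term can be re-estimated, and your sketch puts the $L^q_\rho$ step only at the very end for the remainder. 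Replacing the interpolation step by the semigroup smoothing lemma, and moving it inside the iteration loop, would close the gap.

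Two smaller remarks. First, when you write $F'(v^{(k)})r+O(r^2)$, note that $v^{(k)}$ contains Hermite polynomials and thus grows polynomially in $y$; the control of this interaction term also relies on the far-field cutoff and the weighted smoothing lemma, not on the $L^\infty$ bound alone. Second, you work with general $p>1$ and carry the cubic remainder $O(v^3)$, which is genuinely more general than the paper's exposition (the paper reduces to $p=2$ for clarity and notes that the general case is a straightforward adaptation); this is fine and adds nothing problematic.
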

%%%%%%%%%%%%%%%%%%%%%%%%%%%%%%%%%%%%
%%%%%%%%%%%%%%%%%%%%%%%%%%%%%%%%%%%%
If the multilinear form in \eqref{defmult} has some degenerate
directions, assuming furthermore that $w=w_a$ where $a$ is a non
isolated blow-up point of \eqref{equ},
% Again, as with case \eqref{A2}, if the semilinear form in
% \eqref{defmult} is negative, no relation between the coefficients of
% the expansion can
% be obtained.
% Furthermore, if that form is only nonpositive with
% some degenerate directions, then, assuming the point $a$ to be non-isolated,
we may uncover some rigidity in the expansion, in the
sense that we show that some coefficients of the Taylor expansion are zero.
In order to simplify the presentation, we assume in the following that
\begin{equation}\label{simple}
  N=2,\;\;p=2,\mbox{ and } u_0\ge 0.
  %the solution is nonnegative}.
\end{equation}
The general case where $N=2$, $p>1$ and $u_0$ has no sign follows with
the same proof and natural adaptations of the statements.
 % we first give the result when $N=2$, $p=2$ and $m=4$,
 %  then we give it in
 % for any even $m\ge 6$, still for $N=2$ and $p=2$.
%Our first statement concerns the Taylor expansion of the solution:
More precisely, we claim the following:
%%%%%%%%%%%%%%%%%%%%%%%%%%%%%
%%%%%%%%%%%%%%%%%%%%%%%%%%%%%
\begin{thm}[Second order refined asymptotic expansion near a non
  isolated point, when
 $m\ge 4$]\label{th0m}
  % $N=2$, $p=2$ and $m\ge 6$] \label{th0m}
 %Assume that $N=2$ and $p=2$, and
 Consider $u(x,t)$ a
   solution of equation \eqref{equ} blowing up at
  time $T>0$.
 Assume in addition that the origin is a
 non isolated blow-up point with $m(0)=m\ge 4$.
 Then:\\
 (i) Up to some rotation of coordinates,
it holds that
\begin{align}
 w _0(y,s)=1+& e^{(1-\frac m2)s}\sum_{j=0}^{m-2}C_{m,j}h_{m-j}(y_1)h_{j}(y_2) \label{do}\\
           &+e^{\frac
             {1-m}2s}\sum_{j=0}^mC_{m+1,j}h_{m+1-j}(y_1)h_{j}(y_2)
            \nonumber
              +O(se^{-\frac m2 s})
\end{align}
as $s\to \infty$ in $L^q_\rho$ for any $q\ge 2$,  for some real coefficients $C_{i,j}$ such that the
multilinear form in \eqref{defmult} is nonpositive, where $w_0(y,s)$
is defined in \eqref{defw}.\\
(ii) If $m=4$, then
\begin{equation}\label{discriminant}
  (C_{4,0}, C_{4,1}, C_{4,2}) \neq (0,0,0),\;\;
C_{4,0}\le 0,\;C_{4,2}\le 0\mbox{ and }C_{4,1}^2-4C_{4,0}C_{4,2}\le 0.
\end{equation}
\end{thm}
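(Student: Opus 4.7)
The plan combines the general asymptotic expansion of Proposition \ref{propexp} with rigidity forced by the non-isolation of the origin, extracted through item (ii) of the Vel\'azquez classification recalled above.

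\textbf{Step 1 (expansion to the right order).} Applying Proposition \ref{propexp} to $w_0$ with $M$ chosen just past $m$ yields, in $L^q_\rho$,
\[
w_0(y,s)-\kappa \;=\; e^{(1-m/2)s}\,P(y) \;+\; e^{-(m-1)s/2}\,R(y)\;+\;O\bigl(s\,e^{-ms/2}\bigr),
\]
where $P$ is the nonpositive multilinear form of degree $m$ from \eqref{Am} and $R$ is a polynomial of degree $m+1$, i.e. a linear combination of $h_{m+1-j}(y_1)h_{j}(y_2)$ for $j=0,\dots,m$. The reason is that the linearized operator $\Delta-\tfrac{y}{2}\cdot\nabla+1$ has eigenvalues $1-k/2$ on the $k$-th Hermite level, and the only eigenvalue strictly between $1-m/2$ and $-m/2$ is $-(m-1)/2$, belonging to the degree-$(m+1)$ level. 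Quadratic nonlinear interactions of the leading mode decay like $e^{(2-m)s}$, which is $\le e^{-ms/2}$ whenever $m\ge 4$, so they are absorbed in the error together with the degree-$(m+2)$ linear modes.

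\textbf{Step 2 (rigidity at leading order).} Since the origin is a non-isolated blow-up point, pick $a_n\in\mathcal{S}\setminus\{0\}$ with $a_n\to 0$, and extract a subsequence so that $v_n:=a_n/|a_n|\to v\in S^1$. Item (ii) above, applied with $\epsilon_n\to 0$, gives $|P(a_n)|\le\epsilon_n|a_n|^m$, so by homogeneity $P(v_n)\to 0$ and thus $P(v)=0$. Because $P\le 0$ on $\mathbb{R}^2$, $v$ is a global maximum of $P|_{S^1}$, hence its tangential derivative vanishes; combined with Euler's identity $y\cdot\nabla P(y)=m\,P(y)$ (which kills the radial derivative at $v$), this forces $\nabla P(v)=0$. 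Rotating coordinates so that $v=(0,1)$, we read
\[
C_{m,m}=P(0,1)=0,\qquad C_{m,m-1}=\partial_{y_1}P(0,1)=0,
\]
which collapses the leading sum to $j=0,\dots,m-2$, matching the first sum of \eqref{do}. The second sum of \eqref{do} is then read off from $R$, with no further constraint derived at this order.

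\textbf{Step 3 (part (ii), $m=4$).} With the vanishings of Step 2, the degree-$m$ form factors as
\[
P(y_1,y_2)=y_1^{2}\bigl(C_{4,0}\,y_1^{2}+C_{4,1}\,y_1 y_2+C_{4,2}\,y_2^{2}\bigr)=y_1^{2}\,Q(y_1,y_2).
\]
Nonpositivity of $P$ is equivalent to nonpositivity of $Q$ on $\mathbb{R}^2$ (directly for $y_1\ne 0$, then by continuity). Evaluating $Q$ at $(1,0)$ and $(0,1)$ gives $C_{4,0}\le 0$ and $C_{4,2}\le 0$, while the standard discriminant test for semidefiniteness of a binary quadratic form yields $C_{4,1}^{2}-4C_{4,0}C_{4,2}\le 0$. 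Non-triviality $(C_{4,0},C_{4,1},C_{4,2})\neq(0,0,0)$ is inherited from the nonvanishing of the multilinear form postulated in \eqref{Am}.

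\textbf{Main obstacle.} The delicate point is Step 1, namely extracting from Proposition \ref{propexp} a clean two-term expansion in which no lower-degree Hermite mode appears at the stated rates, and verifying that the only logarithmic resonances produced by the quadratic nonlinearity land at $e^{-ms/2}$ (contributing the $s$-factor in the error) rather than at the second explicit term. Once this expansion is in hand, Steps 2 and 3 are near-direct consequences of the nonpositivity and nontriviality of the multilinear form in \eqref{Am}.
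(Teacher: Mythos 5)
Your Steps 2 and 3 are sound: the gradient/tangential-derivative argument for $C_{m,m}=C_{m,m-1}=0$ is a clean variant of the paper's nonpositivity argument, and the factoring-out-$y_1^2$ discriminant test for part (ii) is correct. However, there is a genuine gap in Step 1. You assert that the degree-$(m+1)$ correction $R$ is a linear combination of $h_{m+1-j}(y_1)h_j(y_2)$ only for $j=0,\dots,m$, i.e.\ that the mode $h_0(y_1)h_{m+1}(y_2)$ is absent. But that is precisely the content of the theorem that must be proved: Proposition \ref{propexp} by itself gives the full degree-$(m+1)$ level with $j=0,\dots,m+1$, and neither the abstract expansion nor the nonpositivity of $P$ rules out the $j=m+1$ term. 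Your argument never uses the non-isolation of the origin at the degree-$(m+1)$ level, and item (ii) of the Vel\'azquez alternative only constrains the degree-$m$ form, so there is simply no input in your proof that could force $C_{m+1,m+1}=0$.

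The paper proves $C_{m+1,m+1}=0$ by a mechanism of an entirely different nature (Step 3 of the paper's proof): the geometric translation \eqref{wbw000}, $w_b(y_b,s)=w_0(be^{s/2}+y_b,s)$, applied to $b=(0,Ae^{s_0/2})$, together with the blow-up criterion of Proposition \ref{problo}. If $C_{m+1,m+1}\neq 0$, choosing $A$ with the sign of $C_{m+1,m+1}$ makes the zero-mode $w_{b,0,0}(s_0)$ strictly larger than $1$ for $s_0$ large (Lemma \ref{lemcrit}), which by Proposition \ref{problo} would force $w_b$ to blow up in finite similarity time, contradicting the global existence of $w_b$ for every $b\in\m R^2$. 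This is the heart of the second-order rigidity and is where non-isolation and the Liouville-type blow-up criterion enter. Your ``Main obstacle'' paragraph gestures at a potential subtlety in Step 1 but misreads it as a bookkeeping issue about which Hermite levels can appear at a given rate; the actual obstacle is that a new rigidity input is needed, and supplying it is most of the work in proving part (i).
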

%%%%%%%%%%%%%%%%%%%%%%%%%
%%%%%%%%%%%%%%%%%%%%%%%%%
\begin{nb} If the origin is an isolated blow-up point, we expect no
  rigidity in the coefficients of the Taylor expansion.
  \end{nb}
% %%%%%%%%%%%%%%%%%%%%%%%%%%%%%
% %%%%%%%%%%%%%%%%%%%%%%%%%%%%%
% \begin{thm}
%  [Second order refined expansion
%  near a non   isolated blow-up point  when $N=2$,  $p=2$ and $m=4$]
%   \label{th0}
%   Assume that $N=2$ and $p=2$, and consider $u(x,t)$ a
%  solution of equation \eqref{equ} blowing up at
%   time $T>0$.
%  Assume in addition that the origin is a
%  non isolated blow-up point with $m(0)=4$.
%  Then, up to some rotation of coordinates,
%   we have
% \begin{align}
% w_0(y,s)&= 1+e^{-s}\{C_{4,0}h_4(y_1)+C_{4,1}h_3(y_1)h_1(y_2)+C_{4,2}h_2(y_1)h_2(y_2)\}\label{expw0-}\\
% &+e^{-\frac 32s}\left\{C_{5,0}h_5(y_1)+C_{5,1}h_4(y_1)h_1(y_2)+C_{5,2}h_3(y_1)h_2(y_2)
% +C_{5,3}h_2(y_1)h_3(y_2)\right.\nonumber\\
% &\left.\;\;\;\;\;\;+C_{5,4}h_1(y_1)h_4(y_2)\right\}+O(se^{-2s}))\nonumber
% \end{align}
% as $s\to \infty$, for some real coeficients $C_{i,j}$ such that
% \begin{equation}\label{discriminant}
%   (C_{4,0}, C_{4,1}, C_{4,2}) \neq (0,0,0),\;\;
% C_{4,0}\le 0,\;C_{4,2}\le 0\mbox{ and }C_{4,1}^2-4C_{4,0}C_{4,2}\le 0,
% \end{equation}
% where $w_0(y,s)$
% is defined in \eqref{defw}.
% \end{thm}
% %%%%%%%%%%%%%%%%%%%%%%%%%%%%%
% %%%%%%%%%%%%%%%%%%%%%%%%%%%%%
\begin{nb}
  Several higher order improvements of
\eqref{do}  (showing cancelations of coefficients)
%  \eqref{expw0-}
  are available in
  the proof.
%  From this statement, we see that the 4-form introduced in
%   \eqref{defmult} reads as follows:
%   \begin{equation*}%\label{4f}
%     C_{4,0}x_1^4+C_{4,1}x_1^3x_2+C_{4,2}x_1^2x_2^2
% \end{equation*}
%     and that
%   \eqref{discriminant} exactly means that it is non zero and
%   nonpositive.
\end{nb}
Our second statement concerns the local geometry of the blow-up set:
%%%%%%%%%%%%%%%%%%%%%%%%%
%%%%%%%%%%%%%%%%%%%%%%%%%
\begin{thm}[Rigidity in the blow-up set
  near a non-isolated blow-up point when
$m\ge 4$]\label{cor0m}
%  $N=2$,  $p=2$ and $m\ge 6$]\label{cor0m}
 Consider under the hypotheses of Theorem \ref{th0m} some sequence
  $a_n=(\aa,\bb)$ of non-zero blow-up points converging to the
  origin. Then,\\
  (i) It holds that
  \[
a_n \cdot e^\bot = o(a_n \cdot e)\mbox{ as }n\to \infty,
\]
up to extracting a subsequence, where $e\in \m R^2$ is a unitary
vector of a degenerate direction of the multilinear form
\eqref{defmult}, $e^\bot$ is unitary and $e\cdot e^\bot=0$.\\
(ii) Up to a rotation and a symmetry of the axes, and up
to a subsequence, it holds that
$\aa \ge 0$, $\bb\ge 0$ and \eqref{do} still holds with possibly
different constants, with
\[
 \mbox{either }\aa = o(\bb^2)\mbox{ or }\aa\sim L|\bb|^\beta|\log \bb|^\alpha,
\]
for some $\beta$ and $\alpha$ enjoying a finite number of rational
values with $0<\beta\le 2$ (see the proof for a finer description of
the localization of $\beta$).\\
Moreover,
$L$ is
a solution of a polynomial equation involving the coefficients of the
Taylor expansion \eqref{do} of $w_0(y,s)$ or one of its higher order
refinements.\\
(iii) When $m=4$, the only possibilities for $e$ in item (i) are
$e=(0,1)$ and
$e=(-2C_{4,2},C_{4,1})$,
%$e=(-C_{4,1} ,2C_{4,0})$,
the second possibility
occurring only if $C_{4,1}^2-4C_{4,0}C_{4,2}=0$ and $C_{4,2}\neq 0$. In addition, we have
the following simple statement for the behavior of
%the sequence
$a_n$ in item (ii): 
\[
  \mbox{either }\aa = o(\bb^2),
  \mbox{ or }\aa \sim L \bb^2
  \mbox{ or } \aa \sim L \bb^{3/2}\mbox{ with }L>0.
\]
%with $L>0$ 
\end{thm}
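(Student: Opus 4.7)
I would extract a subsequence such that $y_n := a_n/|a_n|\to e\in S^1$. Since $a_n\in\q S$, Vel\'azquez's dichotomy quoted in item (ii) of the introduction (adapted to the case \eqref{Am}) forces $y_n\in\Omega_\epsilon$ for any fixed $\epsilon>0$ and all $n$ large, which translates to $|P(y_n)|\le \epsilon$ where $P$ is the multilinear form \eqref{defmult}. A diagonal limit $\epsilon\to 0$ then gives $P(e)=0$, i.e.\ $e$ is a degenerate direction. Combined with $y_n\cdot e\to 1$ and $y_n\cdot e^\bot\to 0$, this yields $a_n\cdot e^\bot = o(a_n\cdot e)$.

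\textbf{Plan for item (iii), classification of $e$.} In the coordinates fixed by Theorem \ref{th0m}, the leading homogeneous polynomial \eqref{defmult} reduces (after keeping only top-degree monomials of each Hermite polynomial) to $P(y_1,y_2)=C_{4,0}y_1^4+C_{4,1}y_1^3y_2+C_{4,2}y_1^2y_2^2=y_1^2\,Q(y_1,y_2)$, where $Q(y_1,y_2)=C_{4,0}y_1^2+C_{4,1}y_1y_2+C_{4,2}y_2^2$. The real zero directions of $P$ split into the axis $\{y_1=0\}$, giving $e=(0,1)$ up to sign, and the zeros of $Q$. By the discriminant bound in \eqref{discriminant}, $Q$ has real zeros only when $C_{4,1}^2-4C_{4,0}C_{4,2}=0$, producing a double root at slope $y_1/y_2=-C_{4,1}/(2C_{4,0})=-2C_{4,2}/C_{4,1}$; that is, $e$ proportional to $(-2C_{4,2},C_{4,1})$. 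When $C_{4,2}=0$, the discriminant condition also forces $C_{4,1}=0$, so only $e=(0,1)$ remains.

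\textbf{Plan for item (ii) and the main obstacle.} I would first rotate so that the direction $e$ from item (i) becomes $(0,1)$, then pass to a subsequence (possibly after the reflection $y_1\mapsto -y_1$) so that $\aa\ge 0$ and $\bb\ge 0$; the expansion \eqref{do} persists in the new coordinates with updated constants. The central identity is $w_{a_n}(y,s)=w_0(y+a_n e^{s/2},s)$. Substituting \eqref{do}, and if necessary the higher-order refinement from Proposition \ref{propexp}, on the right-hand side, I would Taylor-expand each polynomial factor in $y$ around the translation $a_n e^{s/2}$ so as to separate the $s$-dependence from the $y$-dependence, and then compare with the first-order expansion of $w_{a_n}$ forced by $a_n\in \q S$ (either \eqref{A2} or \eqref{Am} at $a_n$ for some profile order $m(a_n)$). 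Matching Hermite coefficients at each $s$-order yields an analytic germ at the origin $\Phi(a_n)=0$ whose Taylor jets are polynomials in $C_{m,j},\,C_{m+1,j},\ldots$. Setting $\aa=\rho_n\bb^\beta|\log\bb|^\alpha$, a Newton-polygon analysis of $\Phi(a_n)=0$ produces a finite list of admissible rational $\beta\in(0,2]$ and integers $\alpha$, and a polynomial equation for the limit $L=\lim\rho_n$ obtained by retaining only the balanced leading monomials of $\Phi$. The main technical obstacle is propagating the $L^q_\rho$ expansion of $w_0$ to the translated argument $y+a_n e^{s/2}$ in the regime $|a_n e^{s/2}|\to\infty$: one chooses $s_n=s_n(a_n)$ so that $\bb e^{s_n/2}$ stays at a prescribed intermediate scale, then combines parabolic regularity with the blow-up criterion of Proposition \ref{problo} to upgrade the $L^q_\rho$-control into the pointwise control needed on the shifted domain.

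\textbf{Specialization to $m=4$.} For $m=4$ the factorization $P=y_1^2 Q$ collapses the analysis to the two geometric regimes identified in item (iii). For the generic direction $e=(0,1)$ with $Q(e)=C_{4,2}\ne 0$, the dominant balance in $\Phi(a_n)=0$ pits $C_{4,2}\aa^2\bb^2$ against the first non-vanishing $(m+1)$-order contribution (of size $\bb^5$ times a fixed constant), producing either $\aa\sim L\bb^2$ with $L$ the root of a linear equation, or the stronger decay $\aa=o(\bb^2)$ when no such balance is reachable. For the exceptional direction $e\propto(-2C_{4,2},C_{4,1})$, $Q$ vanishes on $e$ so the leading contribution is pushed one order deeper in $\bb$, producing $\aa\sim L\bb^{3/2}$ with $L>0$. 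The disappearance of $|\log|^\alpha$ factors is checked directly: at $m=4$ the admissible orders in Proposition \ref{propexp} do not accommodate a non-trivial $s^l$-resonance at the relevant scale, forcing $\alpha=0$.
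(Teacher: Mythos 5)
Your plans for item (i) and for the direction classification in item (iii) match the paper's Subsection \ref{P2}: Vel\'azquez's inclusion $\q S\cap B(0,\delta)\subset\Omega_\epsilon$ forces the limit direction to be a zero of \eqref{defmult}, and factoring $P=y_1^2 Q$ and using the discriminant bound in \eqref{discriminant} gives exactly the two directions $(0,1)$ and $(-2C_{4,2},C_{4,1})$. That part is fine.

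The gap is in items (ii) and (iii)'s power-law half, where your proposal is really only a sketch of intent and omits the mechanism that the paper actually runs on. You write that you would ``match Hermite coefficients at each $s$-order'' from the translated expansion $w_{a_n}(y,s)=w_0(y+a_ne^{s/2},s)$, get an ``analytic germ $\Phi(a_n)=0$'' and do a Newton-polygon analysis. But the translated expansion of $w_0$ is justified only for bounded $\tau=s-s_{A,n}$ (the paper stresses this explicitly below \eqref{deftau-}), and the crucial contradictions in the proof require letting $\tau=\tau_n\to\infty$. The way the paper bridges this gap is to (a) choose $s_{A,n}$ so that $a_{n,2}e^{s_{A,n}/2}=A$ stays \emph{bounded}, not at an intermediate growing scale; (b) feed the resulting $v_{a_n}(\cdot,s_{A,n})$ as initial data into the ODEs \eqref{eqvij} for the expanding-mode projections $v_{b,0,0},v_{b,1,0},v_{b,0,1}$; (c) control the quadratic term in those ODEs over $[s_{A,n},\dd s_{A,n}]$ via the uniform a priori bounds of Propositions \ref{propunif}, \ref{propexp4} and \ref{propexp5}; and (d) contradict the \emph{uniform Liouville-type convergence} $\sup_{a\in\q S}\|v_a(s)\|_{L^2_\rho}\to 0$. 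This last estimate, not the blow-up criterion \ref{problo}, is the obstruction used throughout Section \ref{sec0} --- \ref{problo} enters only in Theorem \ref{th0m}, where the translated point $b$ need not be a blow-up point; here $b=a_n\in\q S$, which is precisely why the Liouville bound applies. Your sketch cites the wrong proposition and never names the uniform Liouville input.

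A second, substantive mismatch is your attribution of the $3/2$ law to the ``exceptional direction $e\propto(-2C_{4,2},C_{4,1})$.'' In the paper the axes are rotated so that $a_n$ always approaches along $(0,1)$, and in Subsection \ref{P3} the hypothesis $\aa\gg\bb^2$ is then shown to \emph{force} the rigidity $C_{4,2}=C_{4,1}=0$, $C_{5,4}=0$, etc., by a chain of Lemmas \ref{cl2}--\ref{cl5}: the cancellations are conclusions of the obstruction argument, not read off a priori from which geometric direction $a_n$ approaches. Your balance ``$C_{4,2}\aa^2\bb^2$ versus a $\bb^5$ contribution'' for the quadratic regime also does not match the actual dominant balance: the relation \eqref{c4254}, $2C_{4,2}L+C_{5,4}=0$, comes from the $v_{b,1,0}$ component where the two competing entries are $2C_{4,2}A^3B_ne^{-\san}$ and $C_{5,4}A^4e^{-\frac32\san}$, both of size $\bb^3$ (up to a factor of $A$), not $\bb^5$. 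Finally, the claimed reason for $\alpha=0$ at $m=4$ (``no non-trivial $s^l$-resonance at the relevant scale'') is too quick: in fact the paper does encounter a potential $\bb^2|\log\bb|$ regime (Step 1 of Part 4 in Subsection \ref{P3}) and rules it out only after another round of coefficient cancellations ($C_{6,6}=C_{5,3}=C_{6,5}=0$) derived from the same obstruction. So while your high-level framework is pointed in the right direction, the proof as proposed cannot be completed without the uniform Liouville bound, the forward-in-time ODE argument with the uniform propagation estimates, and the iterative rigidity scheme.
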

%%%%%%%%%%%%%%%%%%%%%%%
%%%%%%%%%%%%%%%%%%%%%%%
\begin{nb}
  The possible values taken by $\beta$ are such that
  $\frac{\beta-1}2\in E_1\cup E_2$ where the 2 sets $E_1$ and $E_2$
  are defined respectively in \eqref{defE1} and \eqref{defE2} below in
  the proof.
  % From Proposition \ref{proptheta} below, we have a refined
  % characterization of the possible values for $\beta$.
\end{nb}
\begin{nb}
Our strategy to find the precise subquadratic regimes when $m=4$  can
be carried out for any fixed $m\ge 6$. For example, when $m=6$, we found the
following values for $\beta$ :   $4/3$, $3/2$ and $5/3$, with
$\alpha=0$ each time.
 \end{nb}
\begin{nb}
Note that in  the case $m=2$, we
think that a similar result also holds. However, the proof should be
very different, since we are in polynomial scales of time $s$ (see
\eqref{A2}), unlike the case $m\ge 4$ where we are in exponential
scales (see \eqref{Am}).
\end{nb}
\begin{nb}
We believe that our techniques should apply in the case $N\ge 3$ and
yield a similar result.
\end{nb}

  \bigskip

  Let us briefly explain our strategy in this paper. Using Proposition
  \ref{propexp}, we first make a Taylor expansion of $w_0$. Given $b$
  a nearby blow-up point, we derive from this a Taylor expansion for
  $w_b$ (use \eqref{wbw000} given below). Since $w_b$ is uniformly
bounded in $L^\infty$, its components in the expansion \eqref{defvij}
cannot grow, which implies some cancelations in the Taylor
coefficients of $w_0$, justifying \eqref{do}. In a second step, by the same argument, we
derive some constraints on the location of the neighboring blow-up
points, leading to Theorems \ref{th0m} and \ref{cor0m}.

\medskip

Note that for simplicity, we give the proofs only under assumption \eqref{simple}.
The general case follows by the same proof. The only delicate point is
to replace the blow-up criterion given below in Proposition \ref{problo} by its 
 twin version valid for unsigned solutions and given in Proposition
 2.1 page 111  of Merle and Zaag \cite{MZma00}.

%%%%%%%%%%%%%%%%%%%%%%%%%%%%%%%%%%%%
%%%%%%%%%%%%%%%%%%%%%%%%%%%%%%%%%%%%
\section{Existence of an expansion of the solution in similarity variables up to any order}\label{secexp}
%%%%%%%%%%%%%%%%%%%%%%%%%%%%%%%%%%%%
%%%%%%%%%%%%%%%%%%%%%%%%%%%%%%%%%%%%
We prove Proposition \ref{propexp} in this section. 
% This is the aim of this section:\\
% %%%%%%%%%%%%%%%%%%%%%%%%%%%%%%%%%%%%
% %%%%%%%%%%%%%%%%%%%%%%%%%%%%%%%%%%%%
% \textbf{\`A r\'ecrire pour tout $m\ge 4$...}
% \begin{prop}[Asymptotic expansion in similarity variables]%\label{propexp}
% Consider $w(y,s)$ a solution to equation \eqref{eqw} defined for all $y\in \m R^2$ and $s\ge s_0$ for some $s_0$ and assume that it satisfies the expansion 
% \begin{equation}\label{A4}
% w(y,s) - 1 \sim e^{-s}\sum_{j=0}^4 C_{4,j}h_{4-j}(y_1)h_j(y_2)
% %\tag{Am}
% \end{equation}
% as $s\to \infty$, in $L^2_\rho(\m R^2)$ and uniformly on compact sets, where  $h_j(\xi)$ is defined by \eqref{defhj}. Then, 
% after a rotation of coordinates, $w(y,s)$ has an expansion in
% $L^q_\rho(\m R^2)$ (for any $q\ge 2$) and also uniformly con compacts sets as $s\to \infty$ as a power series whose coefficients are $e^{-\frac {js}2}s^{\alpha_j}$ where $j\ge 2$ is an integer and $\alpha_j$ is also an integer enjoying only a finite number of values. The coefficient of $e^{-\frac {js}2}s^{\alpha_j}$ is a (finite) linear combination of the rescaled Hermite polynomials $h_i(y_1) h_l(y_2)$. 
% \end{prop}
% %%%%%%%%%%%%%%%%%%%%%%%%%%%%%%%%%%%%
% %%%%%%%%%%%%%%%%%%%%%%%%%%%%%%%%%%%%
\begin{proof}[Proof of Proposition \ref{propexp}]
% In order to simplify the presentation of the proof, we only handle the
% case where $N=2$, $p=2$ and
% the solution is nonnegative. The proof can be easily adapted to the
% general case. We would like to notice
Note
that the case $m= 4$ is harder
than the case $m\ge 6$. Indeed, the quadratic term in the equation
induces more interactions in the former than in the latter case. For
that reason, we only give the proof in the harder case, namely when $m=4$.

\medskip

Introducing
\begin{equation}\label{defv}
v=w-1,
\end{equation}
 we see from equation \eqref{eqw} that $v$ satisfies the following PDE for all $s\ge -\log T$ and $y\in \m R^2$:
\begin{equation}\label{eqv}
\partial_s v = \q L v +v^2
\end{equation}
where 
\begin{equation}\label{defL}
\q L v = \Delta v -\frac 12 y\cdot \nabla v+v= \frac 1\rho\dv(\rho \nabla v)+v,
\end{equation}
and $\rho(y)$ is defined in \eqref{defro}.
% \begin{equation}\label{defro}
% \rho(y) = \exp\left(-\frac{|y|^2}4\right)/4\pi.
% \end{equation}
The operator $\q L$ is self-adjoint in $L^2_\rho(\m R^2)$, and its spectrum is given by the set $\{1-\frac j2\;|\;j\in \m N\}$, which consists only in eigenvalues, having  $h_n(y_1)h_l(y_2)$ \eqref{defhj} as eigenfunctions, in the sense that
\[
\q L(h_n(y_1)h_l(y_2))= \left(1-\frac{n+l}2\right)h_n(y_1)h_m(y_2).
\]
Expanding $v$  as follows:
\begin{equation}\label{defvij}
v(y,s) = \sum_{i\in \m N,\;0\le j\le i}v_{i,j}(s) h_{i-j}(y_1) h_j(y_2)
\end{equation}
and
% we may project equation \eqref{eqv} and write for all $i\in \m N$, $0\le j\le i$ and $s\ge -\log T$,
% \[
% v_{i,j}'=\left(1-\frac i2\right)v_{i,j}+\int v(y,s)^2 k_{i-j}(y_1)k_j(y_2) \rho(y)dy,
% \]
% where $k_j(\xi) = h_j(\xi)/\|h_j\|_{L^2_\rho}^2$. 
introducing for any $l\in \m N$,
$v_{-,l}$ and the $L^2_\rho(\m R^2)$ projector $P_{-,l}$ defined by
\begin{equation}\label{defv-m}
v_{-,l}(y,s) = 
P_{-,l} (v)\equiv \sum_{i\ge l,\;0\le j\le i}v_{i,j}(s) h_{i-j}(y_1) h_j(y_2),
\end{equation}
we write the following equations satisfied by $v_{i,j}$ and $v_{-,l}$:
\begin{align}
v_{i,j}'(s)&=\left(1-\frac i2\right)v_{i,j}(s)+\int v(y,s)^2 k_{i-j}(y_1)k_j(y_2) \rho(y)dy, \label{eqvij}\\
\partial_s v_{-,l} &=\q L v_{-,l}+P_{-,l}(v^2),\label{eqv-m}
\end{align}
where
\begin{equation}\label{defk}
k_j(\xi) = h_j(\xi)/\|h_j\|_{L^2_\rho}^2.
\end{equation}
This way, we are in a position to perform the first iteration, in order to refine the asymptotic expansion in \eqref{Am}.

\medskip

{\bf Part 1: The first iteration}

Like the following iterations, we proceed in 3 steps in order to get the next terms in the expansion, starting from \eqref{Am}:\\
- we first use parabolic regularity to improve the convergence in \eqref{Am}, from $L^2_\rho(\m R^2)$ to $L^q_\rho(\m R^2)$ for any $q\ge 2$;\\
- then, we use the improved convergence to expand the quadratic term in equation \eqref{eqv}, and write an ODE satisfied by the component $v_{i,j}$ defined in \eqref{defvij}; solving that ODE gives an estimate on $v_{i,j}$, better than what \eqref{Am} states;\\
- finally, we use again the estimate on the quadratic term of equation \eqref{eqv} and write from \eqref{eqv-m} a differential inequality satisfied by $v_{-,l}$ defined in \eqref{defv-m}; integrating that inequality gives an improved estimated on $v_{-,l}$.

\medskip

{\bf Step 1: Parabolic regularity to improve the convergence in \eqref{Am}}

The following parabolic regularity estimate
%  for the operator 
% \[
% \q L_0 = \q L - \Id = \Delta - \frac 12  y \cdot \nabla
% \]
is crucial for the improvement:
%%%%%%%%%%%%%%%%%%%%%%%%%%%%%%%%%%%%
%%%%%%%%%%%%%%%%%%%%%%%%%%%%%%%%%%%%
\begin{lem}(Regularizing effect of the operator $\q L$)\label{lemVel}
$ $\\
(i) {(\bf Herrero and Vel\'azquez \cite{HVihp93})} For any $q>1$, $r>1$, $\psi_0 \in L^q_\rho(\m R^N)$ and $s>\max\left(0, -\log(\frac{q-1}{r-1})\right)$, it holds that
\[
\|e^{\q Ls}\psi_0\|_{L^r_\rho(\m R^N)}\le  \frac{C(q,r)e^se^{-\frac {Ns}{2r}}}{(1-e^{-s})^{\frac N{2q}}(q-1-e^{-s}(r-1))^{\frac N{2r}}} \|\psi_0\|_{L^q_\rho(\m R^N)}.
\]
(ii) Consider $r\ge 2$ and $v_0\in L^r_\rho(\m R^N)$ such that $|v_0(y)|+|\nabla v_0(y)|\le C(1+|y|^k)$ for some $k\in \m N$. Then, for all $s\ge 0$, we have
$\|e^{\q L s}v_0\|_{L^r_\rho(\m R^N)}\le e^s\|v_0\|_{L^r_\rho(\m R^N)}$.
\end{lem}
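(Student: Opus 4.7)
The plan is to handle the two parts separately. Part (i) is attributed verbatim to Herrero and Vel\'azquez~\cite{HVihp93}, so I would simply invoke their statement, keeping track of the precise form of the constant $C(q,r)$ for use in the subsequent bootstrap iterations.

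For Part (ii), I would first reduce to the Ornstein--Uhlenbeck semigroup by writing $\q L = \q A + \mathrm{Id}$ with $\q A = \Delta - \frac12 y\cdot\nabla$. The identity $e^{\q L s} = e^s e^{\q A s}$ reduces the claim to showing that $T_s := e^{\q A s}$ is a contraction on $L^r_\rho(\m R^N)$ for every $r\ge 2$. The key tool would be the Mehler kernel representation
\[
(T_s v_0)(y) = \int_{\m R^N} v_0\bigl(e^{-s/2}y + \sqrt{1-e^{-s}}\,z\bigr)\,\rho(z)\,dz,
\]
expressing $T_s v_0$ as a Gaussian average of translates of $v_0$. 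Jensen's inequality applied to the convex function $t\mapsto |t|^r$ against the probability density $\rho(z)dz$ gives the pointwise bound $|(T_s v_0)(y)|^r\le \int |v_0(e^{-s/2}y+\sqrt{1-e^{-s}}z)|^r\rho(z)\,dz$; integrating against $\rho(y)dy$ and using the invariance of $\rho(y)\rho(z)\,dy\,dz$ under the orthogonal rotation $(y,z)\mapsto\bigl(e^{-s/2}y+\sqrt{1-e^{-s}}z,\,-\sqrt{1-e^{-s}}y+e^{-s/2}z\bigr)$ of $\m R^{2N}$ then delivers $\|T_s v_0\|_{L^r_\rho}\le \|v_0\|_{L^r_\rho}$, which is the desired estimate after multiplying by $e^s$.

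The hard part will be to justify the Mehler representation for data $v_0$ of polynomial growth rather than in $L^r_\rho$ with compact support, since this is outside the usual semigroup framework when the degree $k$ in the bound $|v_0|+|\nabla v_0|\le C(1+|y|^k)$ is large. A clean workaround is to truncate $v_0$ by $\chi_R v_0$ with $\chi_R$ a smooth compactly supported cut-off, apply both the kernel representation and the contraction inequality to the truncation (for which no issue arises), and pass to the limit $R\to\infty$ by dominated convergence, using the polynomial growth bound together with the rapid decay of $\rho$. Alternatively, one can check directly that the right-hand side of the Mehler formula solves $\partial_s V = \q A V$ with $V(\cdot,0)=v_0$ (which amounts to differentiating under the integral, legitimized by the polynomial bound on $\nabla v_0$) and invoke uniqueness in the class of polynomially growing solutions to identify it with $T_s v_0$.
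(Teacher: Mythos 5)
Your treatment of part (i) matches the paper: both simply defer to Herrero--Vel\'azquez.

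For part (ii), you take a genuinely different route. The paper first cites a lemma of Bricmont--Kupiainen to obtain the pointwise polynomial bound $|e^{\q Ls}(1+|y|^k)|\le Ce^s(1+|y|^k)$, then runs an energy argument: differentiate $\int|\psi|^r\rho\,dy$ in time, integrate the $\frac1\rho\dv(\rho\nabla\cdot)$ part by parts to produce the nonpositive term $-r(r-1)\int|\psi|^{r-2}|\nabla\psi|^2\rho$, and close a Gronwall inequality. Your approach instead uses the factorization $e^{\q Ls}=e^s e^{\q As}$ and proves that $T_s=e^{\q As}$ is a contraction on $L^r_\rho$ directly from the Mehler kernel, combining Jensen's inequality with the invariance of $\rho(y)\rho(z)\,dy\,dz$ under the orthogonal transformation $(y,z)\mapsto(e^{-s/2}y+\sqrt{1-e^{-s}}\,z,\,-\sqrt{1-e^{-s}}\,y+e^{-s/2}z)$. (I checked that your Mehler formula is normalized consistently with the weight $\rho(y)=e^{-|y|^2/4}/(4\pi)^{N/2}$ and the generator $\q A=\Delta-\frac12 y\cdot\nabla$.) Your truncation/dominated-convergence workaround plays the role that the Bricmont--Kupiainen pointwise bound plays in the paper, namely legitimizing the formal manipulations when the datum only has polynomial growth. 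The kernel argument is arguably cleaner in that it requires no integration by parts and makes the contraction property structurally transparent; the energy argument is the one more commonly deployed in this circle of parabolic estimates and extends more directly to perturbed operators. Both give exactly the factor $e^s$, so the proposal is correct.
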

%%%%%%%%%%%%%%%%%%%%%%%%%%%%%%%%%%%%
%%%%%%%%%%%%%%%%%%%%%%%%%%%%%%%%%%%%
\begin{nb} Although we are working in two space dimensions, we felt it better to state the result for any $N\ge 1$, for future purpose.
\end{nb}
\begin{proof}
$ $\\
(i) See Section 2 page 139 of \cite{HVihp93}. Although the proof in that paper was given for $N=1$, it extends to higher dimensions with no difficulty.\\ 
(ii) From Lemma 4 page 555 in Bricmont and Kupiainen, we know that $|e^{\q Ls}(1+|y|^k)|\le Ce^s(1+|y|^k)$. Therefore, if $\psi(y,s) = e^{\q Ls}v_0(y)$, we see from the hypotheses that  $|\psi(y)|+|\nabla \psi(y)|\le Ce^s(1+|y|^k)$. Using the linear equation satisfied by $\psi$, we justify that 
\begin{align*}
\frac d{ds} \int|\psi(y,s)|^r\rho(y) dy &= - r(r-1)\int|\psi(y,s)|^{r-2}|\nabla \psi(y,s)|^2 \rho(y) dy+\int|\psi(y,s)|^r\rho(y) dy\\
& \le \int|\psi(y,s)|^r\rho(y) dy,
\end{align*}
and the result follows.
\end{proof}
Now, if 
\begin{equation}\label{defvbar}
\bar v(y,s) =e^{-s}\sum_{j=0}^4 C_{4,j}h_{4-j}(y_1)h_j(y_2)\mbox{ and } g= v-\bar v
\end{equation}
we see from \eqref{Am} and the definition \eqref{defv} of $v(y,s)$ that 
\begin{equation}\label{gl2}
\|g(s)\|_{L^2_\rho} =o(e^{-s})\mbox{ as }s\to \infty.
\end{equation}
Moreover, $\bar v$ is an approximate solution of equation \eqref{eqv}, up to some error term, in the sense that
\[
\partial_s \bar v = \q L \bar v+\bar v^2+O\left(e^{-2s}(1+|y|^8)\right)
\]
(in fact, in this first iteration, unlike the following iterations, $\bar v$ solves the linear equation, but we felt it better to write that equation as a perturbation of the nonlinear equation \eqref{eqv}, since this latter fact will be always true in the following iterations). Using \eqref{eqv}, we write the following equation satisfied by $g(y,s)$:
\begin{equation}\label{eqg}
\partial_s g = \q L g +(v+\bar v)g+O\left(e^{-2s}(1+|y|^8)\right).
\end{equation}
Recalling the following estimate from Giga and Kohn \cite{GKcpam89} and Giga, Matsui and Sasayama \cite{GMSiumj04}:
\[
\|u(t)\|_{L^\infty}\le \frac C{T-t},
\]
we see by definitions \eqref{defw} and \eqref{defv} of $w$ and $v$ that
\begin{equation}\label{b1}
\|v(s)\|_{L^\infty} \le C.
\end{equation}
Furthermore, by defintion \eqref{defvbar} of $\bar v$, we see that 
\begin{equation}\label{b2}
|\bar v(y,s)|\le C,\mbox{ if }|y|<e^{\frac s4}.
\end{equation}
%  Let us recall that
% \[
% \|v(s)\|_{L^\infty}
% %+\|\nabla v(s)\|_{L^\infty}
% \le M,
% \]
% from Giga and Kohn \cite{GKcpam89} together with Giga, Matsui and Sasayama \cite{GMSiumj04}.\\
% for the estimate on $v$, and Merle and Zaag \cite{MZcpam98} and \cite{MZma00} for the estimate on $\nabla v$ (see also the note \cite{MZmaa02}).\\
Consider then $q \ge 2$. Using the Duhamel formulation of equation \eqref{eqg} together with \eqref{b1} and \eqref{b2}, we write for some $M>0$ and for some $s^*$ to be taken fixed large enough,
\begin{align*}
\|g(s)\|_{L^q_\rho} &\le \|e^{s^*(\q L+M\Id)}g(s-s*)\|_{L^q_\rho}
+\int_{s-s^*}^s \|e^{(s-\sigma)(\q L+M\Id)}1_{\{|x|>e^{\frac \sigma 4}\}}\bar vg(\sigma)\|_{L^q_\rho}d\sigma\\
&+C\int_{s-s^*}^s \|e^{(s-\sigma)(\q L+M\Id)}e^{-2\sigma}(1+|x|^8)\|_{L^q_\rho}d\sigma.
\end{align*}
Fixing $s^* > \log(q-1)$, we write from Lemma \ref{lemVel} 
\begin{equation}\label{boundg}
\|g(s)\|_{L^q_\rho} \le C\|g(s-s*)\|_{L^2_\rho}
+\int_{s-s^*}^s\|1_{\{|x|>e^{\frac \sigma 4}\}}\bar vg(\sigma)\|_{L^q_\rho}d\sigma+Ce^{-2s}\int_{s-s^*}^s \|(1+|x|^8)\|_{L^q_\rho}d\sigma.
\end{equation}
Note that $\|g(s-s*)\|_{L^2_\rho}=o(e^{-s})$ as $s\to \infty$ from \eqref{gl2} and $\|(1+|x|^8)\|_{L^q_\rho}\le C$. It remains then to estimate the middle term in the right-hand side of \eqref{boundg} in order to conclude.\\
Since for $\sigma\in [s-s^*, s]$ and $|y|>e^{\frac \sigma 4}$, we have by
definition \eqref{defvbar} of $g$ and $\bar v$ and estimate \eqref{b1}, 
\[
|\bar v(y,\sigma)g(y,\sigma)| =|\bar v(y,\sigma)(v(y,\sigma)-\bar v(y,\sigma)| 
\le Ce^{-s}(1+|y|^4)+Ce^{-2s}(1+|y|^8),
\]
and
\[
0\le \rho(y) \le Ce^{-|y|^2/8}\times e^{-|y|^2/8}\le Ce^{-e^{(s-s^*)/2}/8}e^{-|y|^2/8},
\]
it follows that 
\[
\|1_{\{|x|>e^{\frac \sigma 4}\}}\bar vg(\sigma)d\sigma\|_{L^q_\rho}\le Ce^{-s-e^{(s-s^*)/2}/8}.
\]
Gathering the above-mentioned bounds, it follows that
\[
\|g(s)\|_{L^q_\rho}= o(e^{-s})\mbox{ as }s\to \infty,
\]
which is precisely the goal of Step 1.
% .....

% Fixing $s^* > \log(q-1)$, we write from item (i) in Lemma \ref{lemVel} and \eqref{gl2}
% \[
% \|e^{s^*(\q L+M\Id)}g(s-s*)\|_{L^q_\rho}\le C(q)\|g(s-s*)\|_{L^q_\rho}=o(e^{-s}).
% \]

% Recalling the following estimate from Giga and Kohn \cite{GKcpam89} and Giga, Matsui and Sasayama \cite{GMSiumj04}:
% \[
% \|u(t)\|_{L^\infty}\le \frac C{T-t},
% \]
% we see by definitions \eqref{defw} and \eqref{defv} of $w$ and $v$ that $\|v(s)\|_{L^\infty} \le C$. 
%Using this bound, we may apply classical parabolic regularity estimates to prove that estimate \eqref{Am} holds in $L^4_\rho$. 

\medskip

{\bf Step 2: Refinement of the behavior of $v_{i,j}$}

Since estimate \eqref{Am} holds in $L^4_\rho$ by Step 1, we may use it to refine the ODE \eqref{eqvij} satisfied by $v_{i,j}$ and write:
\begin{equation}\label{ineqvij}
\left|v_{i,j}'(s)-\left(1-\frac i2\right)v_{i,j}(s)\right| \le C\|v(s)\|_{L^4_\rho}^2\le Ce^{-2s}.\\
\end{equation}
Since $|v_{i,j}(s)|\le Ce^{-s}$ by \eqref{defv} and \eqref{Am}, using
%\eqref{cmm} and
elementary ODE techniques, we derive for $s$ large enough:
\begin{equation}\label{exp1}
\sup_{i\le 5}|v_{i,j}(s)-C_{i,j}e^{(1-\frac i2)s}|\le Cse^{-2s},
\end{equation}
with
\[
  C_{i,j} = 0\mbox{ if }i\le 3.
  %\mbox{ or }(i,j)=(4,4).
\]
Note that the constant $C$ in this step may depend on $(i,j)$.

\medskip

{\bf Step 3: Refinement of the behavior of $v_{-,l}$}

Since $\|P_{-,l}(v^2)\|_{L^2_\rho}\le \|v^2\|_{L^2_\rho}=\|v\|_{L^4_\rho}^2\le Ce^{-2s}$ by the regularity in Step 1, 
we see from equation \eqref{eqv-m} that 
\begin{equation}\label{ineq1}
\|v_{-,l}\|_{L^2_\rho}'
\le \left(1-\frac l2\right)\|v_{-,l}\|_{L^2_\rho}+Ce^{-2s},
\end{equation}
where $C$ may depend on $l$. Taking $l=6$ and integrating this inequality yields
\begin{equation}\label{exp2}
\|v_{-,6}\|_{L^2_\rho}\le Cse^{-2s}.
\end{equation}
Gathering \eqref{exp1} and \eqref{exp2}, we see that
\begin{equation}\label{first-iteration}
%v(y,s) = \bar v(y,s) +O(se^{-2s})
v(y,s) =e^{-s}\sum_{j=0}^4 C_{4,j}h_{4-j}(y_1)h_j(y_2)+e^{-\frac 32s}\sum_{j=0}^{5} C_{5,j}h_{5-j}(y_1)h_j(y_2) +O(se^{-2s})
\end{equation}
as $s\to \infty$, in $L^2_\rho(\m R^2)$.
%  where 
% \[
% \bar v(y,s)=
% e^{-s}\sum_{j=0}^{3} C_{4,j}h_{4-j}(y_1)h_j(y_2)+e^{-\frac 32s}\sum_{j=0}^{5} C_{5,j}h_{5-j}(y_1)h_j(y_2).
% \]
This gives the first terms of the expansion announced in Proposition \ref{propexp}.

\medskip

{\bf Part 2: the following iterations} 

The strategy developed in the first iteration works here, iteratively!
Let us see how the second iteration works: using the parabolic
regularity developed in Step 1 of the first iteration, one can show
that estimate \eqref{first-iteration} holds also in $L^q_\rho$, for
any $q\ge 2$, and also uniformly on compact sets. Using this, one can
refine the quadratic term in equations \eqref{eqvij} and \eqref{eqv-m}
up to $O(se^{-3s})$. Integrating those equations (with $i\le 7$ and
$l=8$), we refine the expansion of $v_{,j}$ and $v_{-,l}$ up to that
order, resulting  in an expansion of $v$ in $L^2_\rho$, up to $O(s^2
e^{-3s})$, and showing terms like $e^{-s}$, $e^{-\frac 32s}$,
$se^{-2s}$ and $e^{-2s}$. Iterating the process, we may get an
expansion for $v$ valid up to any order, in any $L^q_\rho$ space with
$q\ge 2$, and also uniformly on compact sets. This concludes the proof
of Proposition \ref{propexp}, when $p=2$, the solution is nonnegative
and $m=4$. As we explained in the beginning of the proof, this is the
harder case, and the adaptation to the general case is straightforward.
\end{proof}
 %%%%%%%%%%%%%%%%%%%%%%%%%%%%%
%%%%%%%%%%%%%%%%%%%%%%%%%%%%%
\section{Rigidity in the Taylor expansion
  %near non isolated blow-up points
  for general $m\ge 4$}\label{sectaylor}
%%%%%%%%%%%%%%%%%%%%%%%%%%%%%
%%%%%%%%%%%%%%%%%%%%%%%%%%%%%
 This section is devoted to the proof of Theorem \ref{th0m}.

\begin{proof}[Proof of Theorem \ref{th0m}]
  %Assume that $N=2$ and $p=2$, and
  Consider $u(x,t)$ a
  solution of equation \eqref{equ} blowing up at
  time $T>0$.
 Assume that the origin is a
 non isolated blow-up point with $m(0)=m\ge 4$ is even.
 % and consider an arbitrary
 %  sequence of non-zero blow-up points $a_n=(a_{n,1},a_{n,2})$ converging to
 %  the origin as $n\to \infty$.
 %   As we explained in the beginning of the proof of the case $m=4$
%    (see Section \ref{sec0}), we can assume that the solution is
%    nonnegative, without loss of generality. 
% Up to extracting a subsequence and to changing $u(x_1,x_2,t)$ by $u(\pm x_1,\pm x_2,t)$ (also a solution to equation \eqref{equ}), we may assume that 
% \begin{equation}\label{posm}%\label{an2>0}
% \aa\ge 0\mbox{ and }a_{n,2}\ge 0\mbox{ for all }n\in \m N.
% \end{equation}
%
% \medskip
%
% We proceed in 3 parts:\\
% - In Part 1, we use Proposition \ref{propexp} to give the two first
% orders in the Taylor expansion of $w_0(y,s)$. Then, we recall from the
% case $m=4$ some geometrical transformation which is at the heart of
% our strategy.\\
% - Part 2 is then dedicated to the super-quadratic case, where $\aa \gg
% \bb^2$ as $n\to \infty$.\\
% - In Part 3, we deal with the quadratic case, where $\aa \sim L \bb^2$,
% for some $L>0$.\\
% - Finally, in Part 4, we gather all the information and conclude the
% proof. \textbf{Il faut insister sur le fait que l'on peut avoir
%   plusieurs branches, pas uniquement dans le cadran sup\'erieur droit,
%   et qu'il y a une question de rotation du rep\`ere \`a ne pas oublier}.
%
% \bigskip 
%
% \textbf{Part 1: Setting of the problem and strategy of the proof}
%
We proceed in 4 steps:\\
- First, we recall from Vel\'azquez \cite{Vtams93}  the first order
% $e^{(1-\frac m2)s}$
%in the
Taylor expansion of $w_0(y,s)$ and show
that two coefficients are zero.\\
- Second, we introduce some geometrical transformation as a crucial tool in the proof.\\
- Third, we give the next order in the Taylor expansion
%namely $e^{\frac{1-m}2s}$
and
%- Forth, we
show that one coefficient
%of order $e^{\frac{1-m}2s}$
is zero. \\
- Forth, we take $m=4$ and justify \eqref{discriminant}.

\bigskip

\textbf{Step 1:
First order 
%Terms of order $e^{(1-\frac m2)s}$ in the
Taylor expansion}

 As stated in \eqref{Am}, we know from Vel\'azquez \cite{Vtams93} that
\begin{equation}\label{A4m}
  w_0(y,s)= 1+ e^{(1-\frac m2)s}
\sum_{j=0}^mC_{m,j}h_{m-j}(y_1)h_{j}(y_2)+o(e^{(1-\frac m2)s})
\end{equation}
for some real coefficients $C_{m,j}$ for $j=0,\dots,m$ such that the multilinear form 
\begin{equation}\label{defmult-m}
\sum_{j=0}^m C_{m,j}y_1^{m-j}y_2^j
\end{equation}
is non zero and nonpositive.
From the alternative due to Vel\'azquez \cite{Vtams93} and given on
page \pageref{resvel}, we know that this multiform has (at least) one
direction of degeneracy.
% the sequence $a_n$ should be asymptotic to some direction of
% degeneracy of the multiform.
Up to making a rotation of coordinates, we may assume that the degeneracy direction is the axis $\{y_1=0\}$, which means that 
% \begin{equation}\label{onem}%\label{dom12}
% a_{n,1} = o(a_{n,2}) \mbox{ as } n\to \infty
% \end{equation}
% and also that 
\begin{equation}\label{c44m}
C_{m,m}=0.
\end{equation}
Moreover, in order to guarantee the nonpositivity of the multilinear form, we directly see that
\begin{equation}\label{c43m}
C_{m,m-1}=0,
\end{equation}
otherwise, for $|y_2|$ large enough, the multilinear form may achieve positive values. 

\bigskip

\textbf{Step 2: A geometrical transformation}

As already written in the introduction, 
 a crucial idea lays at the heart of our strategy. It consists in remarking that any Taylor expansion of $w_0(y,s)$ can be translated into a Taylor expansion of $w_b(z,s)$, for any other point $b\in \m R^2$ (not necessarily a blow-up point), thanks to the following relation which follows from the similarity variables definition \eqref{eqw}:
\begin{equation}\label{wbw000}
w_b(y_b,s)=w_0(y,s)\mbox{ where }y=be^{\frac s2}+y_b.
\end{equation}
With a suitable choice of $b$, the uniform boundedness of $w_b$ in
$L^\infty$ induces some cancelations of the coefficients appearing in
the Taylor expansion of $w_0$.

\bigskip

\textbf{Step 3: Second order Taylor expansion}

Following the first order expansion \eqref{A4m} together with
\eqref{c44m} and \eqref{c43m}, we may use Proposition \ref{propexp} to find the next order in the Taylor expansion:
\begin{align}
  w_0(y,s)=& 1+ e^{(1-\frac m2)s}\sum_{j=0}^{m-2}C_{m,j}h_{m-j}(y_1)h_{j}(y_2) \label{expw0m}\\
           &+e^{\frac
             {1-m}2s}\sum_{j=0}^{m+1}C_{m+1,j}h_{m+1-j}(y_1)h_{j}(y_2)
             +\bar v_0(y,s) \nonumber
             %+O(se^{-\frac m2s}),\nonumber
\end{align}
% where
% \[
%   \bar v_0(y,s)=O(se^{-\frac m2s})
% \mbox{ in }L^q_\rho\mbox{ for any }q\ge 2, 
%   \mbox{ as }s\to \infty,
% \]
with
\begin{equation}\label{interm0}
\|\bar v_0(s)\|_{L^q_\rho}=O(se^{-\frac m2s})\mbox{ as }s\to \infty,
\end{equation}
for any $q\ge 2$,
for some real constants $C_{m+1,j}$ for $j=0,\dots,m+1$. We claim that
$C_{m+1,m+1}= 0$. Indeed, 
%
%\bigskip
%
%\textbf{Step 4: Proof of the fact that $C_{m+1,m+1}=0$}
%
%As in the case $m=4$,
let us assume by contradiction that $C_{m+1,m+1}\neq 0$ and find a contradiction. Following Step 2 and choosing 
\begin{equation}\label{defb0}
b=(0,Ae^{\frac{s_0}2}),
\end{equation}
where $A$ and $s_0$ will be taken large enough (in modulus for $A$), we show the following (note that $b$ may or may not be a blow-up point):
%%%%%%%%%%%%%%%%%%%%%%%%%
%%%%%%%%%%%%%%%%%%%%%%%%%
\begin{lem}\label{lemcrit}For any $A\in \m R$ such that $|A|\ge 1$,
it holds that
\[
w_{b,0,0}(s_0)-1\sim C_{m+1,m+1} A^{m+1} e^{\frac {(1-m)}2 s_0}\mbox{ as }s_0\to \infty,
\]
where $b$ is given by \eqref{defb0}, and $w_{b,0,0}(s_0)$ is the coordinate of $w_b(y,s_0)$ along the polynomial $h_0(y_1)h_0(y_2)\equiv 1$ as in \eqref{defvij}.
\end{lem}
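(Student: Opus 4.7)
My plan is to use the change of variables \eqref{wbw000} to transport the expansion \eqref{expw0m} of $w_0$ into an expansion for $w_b$, and then extract the zeroth mode by orthogonality against the Gaussian weight $\rho$. Since $b$ in \eqref{defb0} lies on the $y_2$-axis, the shift $be^{s_0/2}$ in \eqref{wbw000} is purely along the second coordinate; denoting it by $(0,\tau)$, I get
\[
w_b(y_b,s_0)=w_0(y_{b,1},\,y_{b,2}+\tau,\,s_0),
\]
so that
\[
w_{b,0,0}(s_0)=\int_{\mathbb{R}^2}w_0(y_{b,1},y_{b,2}+\tau,s_0)\,\rho(y_b)\,dy_b.
\]

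I would then substitute \eqref{expw0m} into this integral and track four contributions. The constant term yields $1$. Each $C_{m,j}$-term with $0\le j\le m-2$ contains the factor $h_{m-j}(y_{b,1})$ with $m-j\ge 2$, which integrates to zero against $\rho_1(y_{b,1})\,dy_{b,1}$ by orthogonality to $h_0\equiv 1$. The $C_{m+1,j}$-terms contain $h_{m+1-j}(y_{b,1})$, forcing $j=m+1$ as the only surviving index. For that index the $y_1$-factor becomes $h_0\equiv 1$, and I would expand the $y_2$-factor via the elementary addition formula
\[
h_{m+1}(\xi+\tau)=\sum_{k=0}^{m+1}\binom{m+1}{k}\tau^{m+1-k}\,h_k(\xi),
\]
which follows immediately by polynomial expansion from the definition \eqref{defhj}. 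Only the $k=0$ summand survives integration against $\rho_1(y_{b,2})$, producing $\tau^{m+1}$. Combining with the prefactor $e^{(1-m)s_0/2}$ and reducing $\tau^{m+1}$ to $A^{m+1}$ via the scaling in \eqref{defb0}, this produces the claimed main term $C_{m+1,m+1}\,A^{m+1}\,e^{(1-m)s_0/2}$.

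It remains to show that the error $\bar v_0$ contributes only $o(e^{(1-m)s_0/2})$. I would change variables $z=y_b+(0,\tau)$ to rewrite its contribution as $\int \bar v_0(z,s_0)\,\rho(z-(0,\tau))\,dz$. The Radon--Nikodym density $\rho(z-(0,\tau))/\rho(z)=\exp(\tau z_2/2-\tau^2/4)$ lies in $L^{q'}_\rho$ for every finite $q'$, by completing the square in the Gaussian exponent. H\"older's inequality applied together with \eqref{interm0} then bounds this error term by $O(s_0 e^{-ms_0/2})$, which is $o(e^{(1-m)s_0/2})$ since $m\ge 4$.

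The main obstacle I anticipate is precisely this final error control: the bound \eqref{interm0} on $\bar v_0$ is given in $L^q_\rho$ anchored at the origin, whereas I need to integrate against a Gaussian centered at the shifted point $(0,\tau)$. The explicit Gaussian form of $\rho$ is what saves the argument, since the Radon--Nikodym factor becomes an exponential whose $L^{q'}_\rho$-norm, after completing the square, is a pure constant depending only on $\tau$ and not on $z$.
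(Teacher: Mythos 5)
Your proposal is correct and follows essentially the same route as the paper's proof: transport the expansion via \eqref{wbw000}, isolate $C_{m+1,m+1}$ by orthogonality against $h_0(y_1)$ and the binomial addition formula \eqref{binomial}, and control the remainder by factoring out the Radon--Nikodym density $\rho(y-(0,A))/\rho(y)$ and applying Cauchy--Schwarz (the paper's choice of H\"older exponent) with \eqref{interm0}.
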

%%%%%%%%%%%%%%%%%%%%%%%%%
%%%%%%%%%%%%%%%%%%%%%%%%%
% (note that $b$ may or may not be a blow-up point), we show that 
% \[
% w_{b,0,0}(s_0)-1\sim C_{m+1,m+1} A^{m+1} e^{\frac {(1-m)}2 s_0}\mbox{ as }s_0\to \infty,
% \]
% which leads to a contradiction by the blow-up criterion of Proposition
% \ref{problo}. Thus, we see that
% $C_{m+1,m+1}=0$. 

% \textbf{Part 4:  Proof of the fact that $C_{5,5}=0$}

% Let us assume by contradiction that $C_{5,5}\neq 0$. Following Step 2 and choosing 
% \begin{equation*}%\label{defb0}
% b=(0,Ae^{\frac{s_0}2}),
% \end{equation*}
% where $A\in \m R$ and $s_0\ge -\log T$ will be taken large enough (in modulus for $A$), we show the following (note that $b$ may or may not be a blow-up point):
% %%%%%%%%%%%%%%%%%%%%%%%%%
% %%%%%%%%%%%%%%%%%%%%%%%%%
% \begin{lem}\label{lemcrit}For any $A\in \m R$ such that $|A|\ge 1$,
% it holds that
% \[
% w_{b,0,0}(s_0)-1\sim C_{5,5} A^5 e^{-\frac 32 s_0}\mbox{ as }s_0\to \infty,
% \]
% where $b$ is given by \eqref{defb0}, and $w_{b,0,0}(s_0)$ is the coordinate of $w_b(y,s_0)$ along the polynomial $h_0(y_1)h_0(y_2)\equiv 1$ as in \eqref{defvij}.
% \end{lem}
% %%%%%%%%%%%%%%%%%%%%%%%%%
% %%%%%%%%%%%%%%%%%%%%%%%%%
\noindent Let us first use this lemma to find a contradiction, then
prove the lemma.\\
Choosing $A$ of the sign of $C_{m+1,m+1}$,
then taking $s_0$ large enough, we see that
\begin{equation}\label{hided}
w_{b,0,0}(s_0)>1.
\end{equation}
In other words, $w_b$ satisfies the following blow-up criterion we proved in \cite{MZcpam98}:
%%%%%%%%%%%%%%%%%%%%%%%%%%%%%
%%%%%%%%%%%%%%%%%%%%%%%%%%%%%
\begin{prop}[Blow-up criterion for nonnegative solutions of
(\ref{eqw})]\label{problo}
Let $w$ be a nonnegative solution of (\ref{eqw}) which
satisfies 
\begin{equation*}%\label{conblo}
w_{0,0}(s_1)>1
\end{equation*}
for some $s_1\in\m R$. Then, $w$ cannot be defined for all $(y,s)\in \m R^2 \times [s_1,\infty)$. 
\end{prop}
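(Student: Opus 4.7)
The plan is to exploit that in the simplified setting $p=2$ with $w\ge 0$, equation \eqref{eqw} reads $\partial_s w=\Delta w-\frac12 y\cdot\nabla w-w+w^2$, and that the weight $\rho$ defined in \eqref{defro} is a probability measure on $\mathbb{R}^2$ for which the operator $\mathcal{L}_0 v\equiv\Delta v-\frac12 y\cdot\nabla v=\frac{1}{\rho}\dv(\rho\nabla v)$ is formally skew-adjoint on constants. So the first step is to project the PDE onto the constant Hermite mode $h_0(y_1)h_0(y_2)\equiv 1$, i.e.\ integrate against $\rho$. Assuming enough decay of $w$ and $\nabla w$ at infinity in $y$ (which follows from $w\in L^\infty$ together with parabolic regularity of \eqref{eqw}, so that the boundary terms vanish), this yields
\[
\frac{d}{ds}w_{0,0}(s)=-w_{0,0}(s)+\int_{\mathbb{R}^2} w(y,s)^2\,\rho(y)\,dy.
\]

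The second step is Jensen's inequality applied to the convex function $t\mapsto t^2$ on $[0,\infty)$ (this is where $w\ge 0$ enters in the simplified setting, and where the full unsigned statement quoted from \cite{MZma00} would require a separate argument): we get $\int w^2\rho\,dy\ge (\int w\rho\,dy)^2=w_{0,0}(s)^2$, hence the differential inequality
\[
w_{0,0}'(s)\ge w_{0,0}(s)\bigl(w_{0,0}(s)-1\bigr).
\]
The third step is a standard ODE comparison. Setting $z(s)=w_{0,0}(s)-1$, the hypothesis $w_{0,0}(s_1)>1$ gives $z(s_1)>0$, and the inequality $z'\ge z(z+1)\ge z^2$ shows that $z$ cannot stay finite past $s_1+1/z(s_1)$. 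Therefore $w_{0,0}$ blows up in finite $s$-time, but on the other hand, if $w$ were defined (as a smooth, i.e.\ $L^\infty_{\mathrm{loc}}(\,ds;\,L^\infty(dy))$) solution of \eqref{eqw} on $\mathbb{R}^2\times[s_1,\infty)$, then $w_{0,0}(s)=\int w(y,s)\rho(y)\,dy$ would remain locally bounded on $[s_1,\infty)$ since $\rho$ is a probability measure. This contradicts the finite-time blow-up of $w_{0,0}$, proving the proposition.

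The main obstacle I foresee is not the ODE comparison, which is immediate, but justifying the projection step rigorously. Concretely, one has to check that the boundary terms coming from integration by parts in $\int\mathcal{L}_0 w\,\rho\,dy$ vanish, and that the constant mode $w_{0,0}$ is a smooth function of $s$ whose derivative equals $\int\partial_s w\,\rho\,dy$. Both points follow from standard parabolic smoothing and decay estimates for \eqref{eqw} (as in Giga--Kohn and the regularizing effect recalled in Lemma \ref{lemVel}), but they have to be written carefully; alternatively, one can first truncate $\rho$, perform the computation on a growing ball, and pass to the limit using the Gaussian decay of $\rho$ together with $\|w(s)\|_{L^\infty}\le C$. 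For the general case $p>1$ with sign-changing solutions, Jensen would be replaced by the convexity of $t\mapsto |t|^{p-1}t$ (or an $L^p_\rho$-norm argument as in Proposition~2.1 of \cite{MZma00}), with threshold $\kappa=(p-1)^{-1/(p-1)}$ in place of $1$.
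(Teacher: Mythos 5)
Your proof is correct. The paper itself gives no proof of Proposition~\ref{problo}, merely citing Proposition~3.8 of \cite{MZcpam98}, and the argument you reconstruct — project \eqref{eqw} onto the constant mode, drop the boundary terms in $\int \frac{1}{\rho}\dv(\rho\nabla w)\,\rho\,dy$, apply Jensen's inequality to get $w_{0,0}'\ge w_{0,0}(w_{0,0}-1)$, and compare with a Riccati ODE that blows up in finite $s$-time — is exactly the standard one underlying the cited reference, with the threshold $1=\kappa$ coming from $p=2$. One micro-gap worth filling: before using the chain $z'\ge z(z+1)\ge z^2$ you should note that $z=w_{0,0}-1$ stays positive for $s>s_1$ (e.g.\ from $z'\ge z$ while $z\ge0$, giving $z(s)\ge z(s_1)e^{s-s_1}>0$), so the Riccati comparison is legitimate on the whole existence interval; and for the final contradiction you need $w$ to be at least $L^\infty_{\mathrm{loc}}$ in time with $w(\cdot,s)\in L^1_\rho$, which holds for the $w=w_b$ to which the paper applies the criterion since those are globally bounded by the blow-up rate estimate $\|u(t)\|_{L^\infty}\le C(T-t)^{-1/(p-1)}$.
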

%%%%%%%%%%%%%%%%%%%%%%%%%%%%%
%%%%%%%%%%%%%%%%%%%%%%%%%%%%%
\begin{nb}
  % As we have already said in the introdcution and in the
  % beginning of this section, we give the proof only in the nonnegative case,
  % without loss of generality. However,
  % If the reader is interested in
  % checking the proof in the unsigned case, he should be adviced that
  This proposition is valid only for nonnegative solutions. For
  unsigned solutions, 
  it should be replaced by a twin statement given in Proposition 1.2
  page 111 in \cite{MZma00}.
  %and which holds for unsigned solutions.
  \end{nb}
\begin{proof}
See Proposition 3.8 page 164 in \cite{MZcpam98}.
\end{proof}
Since $w_b$ exists by definition \eqref{defw} for all $(y,s)\in \m
R^2\times [-\log T, +\infty)$,  a contradiction follows from
\eqref{hided} and this proposition. Thus, $C_{m+1,m+1}=0$, provided that we prove Lemma \ref{lemcrit}.

\medskip

\begin{proof}[Proof of Lemma \ref{lemcrit}]
 %  Consider $A\in \m R$ with $|A|\ge 1$ and $s_0$ to be taken large enough later.
%   Recall that \eqref{expw0m} holds 
%   %
%  %  By definition \eqref{defv} of $v_0$, we may rewrite \eqref{expw0m} as follows:
% % \begin{equation}\label{v0}
% % v_0(y,s)= e^{-s}\left[C_{4,0}h_4h_0+C_{4,1}h_3h_1 +C_{4,2}h_2h_2\right]
% % +e^{-\frac 32s}\sum_{j=0}^5C_{5,j}h_{5-j}(y_1)h_{j}(y_2)
% % +\bar v_0(y,s),
% % \end{equation}
% with
% \begin{equation}\label{interm0}
% \|\bar v_0(s)\|_{L^q_\rho}=O(se^{-\frac m2s})\mbox{ as }s\to \infty,
% \end{equation}
% %in $L^q_\rho$
% for any $q\ge 2$.
%Note also that the notation $h_mh_j$ stands for $h_m(y_1)h_j(y_2)$.
%
%\medskip
 %
Using the transformation \eqref{wbw000} together with the definition \eqref{defb0} of $b$, we see that
\begin{equation}\label{vbv0}
  v_b(y_b,s_0)=v_0(y,s_0)\mbox{ with }
y_1=y_{b,1},\;\;y_2=y_{b,2}+A, 
\end{equation}
hence, we write from \eqref{expw0m}
%% \eqref{v0}
%that
\begin{align}
  v_b(y_b,s_0)&= e^{(1-\frac m2)s_0}\sum_{j=0}^{m-2}C_{m,j}h_{m-j}(y_{b,1})h_{j}(y_{b,2}+A) \\
           &+e^{\frac
             {1-m}2s_0}\sum_{j=0}^{m+1}C_{m+1,j}h_{m+1-j}(y_{b,1})h_{j}(y_{b,2}+A)
             +\bar v_b(y_b,s_0) \label{vb}
%
%             e^{(1-\frac m2)s_0}
%             \left[C_{4,0}h_4(y_{b,1})+C_{4,1}h_3(y_{b,1}) h_1(y_{b,2}+A)
% +C_{4,2}h_2(y_{b,1})h_2(y_{b,2}+A)\right]\nonumber\\
% &+e^{-\frac 32s_0}\sum_{j=0}^5C_{5,j}h_{5-j}(y_{b,1})h_{j}(y_{b,2}+A)+\bar v_b(y_b,s_0)\label{vb}
\end{align}
with
\begin{equation}\label{defvbb}
\bar v_b(y_b,s_0)=\bar v_0(y,s_0).
%\mbox{ and }|\hat v_b(y_b,s_0)|\le Ce^{-3s_0/2}(|y_b|^3+A^2|y_b|).
\end{equation}
Recalling the classical relation
\[
h_j'=j h_{j-1}
\]
related to the Hermite polynomials \eqref{defhj}, we may use a Taylor expansion to derive the following binomial relation
\begin{equation}\label{binomial}
h_j(\xi+A)=\sum_{m=0}^j  \binom jm A^j h_{j-m}(\xi).
\end{equation}
%Using this relation and \eqref{vb}, we may write a long formula for $v_b(y_b,s_0)$. Since we are interested in the projection on $h_0h_0 \equiv h_0(y_1)h_0(y_2)=1$, we will directly project the long formula and obtain a one column table (in fact a list), whose only column is entitled ``$v_{b,0,0}(s_0)/$ and whose lines bear the name of the coeffients $C_{i,j}$. More precisely, this is what we get 
Using this relation and projecting the expression \eqref{vb}, we derive the following expression
\[
v_{b,0,0}(s_0)= A^{m+1} C_{m+1,m+1}e^{\frac {1-m}2 s_0}+\bar v_{b,0,0}(s_0)
%+\int \bar v_b(y_b,s_0)\rho(y_b) dy_b,
\]
where we have used the classical orthogonality relation
\begin{equation}\label{ortho}
\int_{\m R} h_l(\xi)h_j(\xi) \rho(\xi) d\xi = 2^jj! \delta_{l,j}. 
\end{equation}
% \[
% h_1(y_{b,2}+A)=h_1(y_{b,2})+A
% \mbox{ and }
% h_2(y_{b,2}+A)=h_2(y_{b,2})+2h_1(y_{b,2})A+A^2,
% \]
% we write
% \begin{align*}
% &v_b(y_b,s_0)-A^2e^{-s_0}C_{4,2}h_2h_0\\
% =&e^{-s_0}\left[C_{4,0}h_4h_0 +C_{4,1}(h_3h_1+Ah_3h_0)+C_{4,2}(h_2h_2+2Ah_2h_1)\right]
% +
% \bar v_b(y_b,s_0)+\hat v_b(y_b,s_0).
% \end{align*}
% Therefore, 
% \begin{equation}\label{interm}
% \|v_b(y_b,s_0)-A^2e^{-s_0}C_{4,2}h_2h_0\|_{L^2_\rho} \le CAe^{-s_0}+CA^2e^{-\frac{3s_0}2}+\|\bar v_b\|_{L^2_\rho}.
% \end{equation}
 We need to estimate $\int \bar v_b(y_b,s_0)\rho(y_b) dy_b$ in order to conclude. 
By definition, we write
\begin{align*}
%\|\bar v_b(s_0)\|_{L^2_\rho}^2&=
\bar v_{b,0,0}(s_0)
%=\int \bar v_b(y_{b,1},y_{b,2},s_0)\rho(y_b) dy_b%\\
&=\int \bar v_0(y_1,y_2,s_0) \rho(y-(0,A))dy.
\end{align*}
Now, let us compute
\[
\rho(y-(0,A))=\rho(y) \exp\left(\frac A2 y_2\right)\exp\left(-\frac{A^2}4\right).
\]
% \begin{align*}
% &\rho(y-(0,A))\\
% =& 
% \frac 1{4\pi} \exp\left(-\frac{|y_1+O(e^{-s_0/2})|^2}4\right)
% \exp\left( -\frac{|y_2-A+O(e^{-s_0/2})|^2}4\right)\\
% =&\rho(y) \exp(y_1O(e^{-\frac{s_0}2}))\exp(O(e^{-s_0}))
% \exp(y_2[\frac A2+O(e^{-s_0/2})]) 
% \exp(-\frac{(A+O(e^{-s_0/2}))^2}4).
% \end{align*}
Since $\int \exp(Ay_2)\rho(y) dy\le Ce^{A^2}$, 
using the Cauchy-Schwartz inequality together with \eqref{interm0}, we write
\begin{equation}\label{vb00}
|\bar v_{b,0,0}(s_0)|\le
%\left|\int \bar v_b(y_{b,1},y_{b,2},s_0)\rho(y_b) dy_b\right| \le 
 Ce^{\frac { A^2}4}\left(\int \bar v_0(y,s_0)^2\rho(y) dy \right)^{1/2}\le
 Ce^{\frac {A^2}4}s_0e^{-\frac m2 s_0}.
\end{equation}
This concludes the proof of Lemma \ref{lemcrit}. 
\end{proof}This also concludes the proof of the fact that $C_{m+1,m+1}=0$. From
\eqref{expw0m}, se see that the Taylor expansion
\eqref{do}
% \eqref{expw0-}
holds.
%(note that \eqref{discriminant} was already proved in Step 1).
%Proposition \ref{th0} too. 

\bigskip

\textbf{Step 4: Proof of \eqref{discriminant} when $m=4$}

Using \eqref{do},
% which holds from Steps 1 and 3,
we see that the 4-form introduced in
  \eqref{defmult-m} reads as follows:
  \begin{equation*}%\label{4f}
    C_{4,0}x_1^4+C_{4,1}x_1^3x_2+C_{4,2}x_1^2x_2^2.
  \end{equation*}
  Since this form is non-zero and nonpositive, dividing by $x_1^2$, we see that 
  \eqref{discriminant} follows.
%  Note
% that the nonpositivity of the form in \eqref{defmult-} implies that
% $C_{4,0}\le 0$ (just take $y_2=0$ and make $|y_1|\to \infty$) and
% $C_{4,2}\le 0 $ (just take $(y_1,y_2) =(1, A)$ with $|A|$ large enough), and yields the
% condition \eqref{discriminant} on the discriminant of the quadratic
% form, after dividing by $y_1^2$.
  
\medskip

This concludes  the proof of Theorem \ref{th0m}.
\end{proof}
%%%%%%%%%%%%%%%%%%%%%%%%%%%%%
%%%%%%%%%%%%%%%%%%%%%%%%%%%%%
\section{Rigidity
  % near non isolated blow-up points
in the geometry of the blow-up set
  when $m=4$}\label{sec0}
%%%%%%%%%%%%%%%%%%%%%%%%%%%%%
%%%%%%%%%%%%%%%%%%%%%%%%%%%%%
 This section is devoted to the proof of Theorem \ref{cor0m} when $m=4$.
 % Our approach is explicit, and the same strategy applied
 % for $m\ge 6$ would need a number of steps increasing linearly with
 % $m$. For that reason, we will adopt a different strategy  for
 The proof when $m\ge 6$ is different and will be given in Section \ref{sec0m}.

%  Note
% that 
% Corollary \ref{cor0} will not be proved, as it is just a
%     different formulation of Theorem \ref{th0}.

\medskip

%Assume that $N=2$ and $p=2$, and
Consider $u(x,t)$ a
solution of equation \eqref{equ} blowing up at
  time $T>0$.
 Assume that the origin is a
 non isolated blow-up point with $m(0)=4$. Consider $a_n$ a sequence
 of non-zero blow-up points $a_n$ converging to the origin.
%  In order to simplify the proof, we assume that the solution is
%  nonnegative.  In the general case, the proof remains exactly the same, apart from the blow-up
%  criterion given below in Proposition \ref{problo} which holds only for
%  nonnegative solutions, and which should be replaced by its twin
%  version proved in Merle and Zaag \cite{MZma00} (see
%  Proposition 2.1 page 111 in that paper), which involves the energy
%  functional and which is valid for unsigned solutions.
%  By the way, since  we work
%  locally near the origin and the solution
%  has a constant sign locally near blow-up points (see Corollary 2 page
%  108 in \cite{MZma00}), using a cut-off, we can see the unsigned case
%  as a perturbation of the nonnegative case by
%  bounded terms, irrelevant at blow-up.
% % arbitrary small exponential terms. 
 
\medskip

We proceed in 4 subsections to give the proof:\\
%of Theorem \ref{th0} :\\
% - Subsection \ref{P1} is devoted to the proof of the Taylor expansion
% \eqref{expw0-} together with estimate \eqref{discriminant}, after an
% appropriate change of variables. To do that, we use Proposition
% \ref{propexp} to give the two first orders in the Taylor expansion of
% $w_0(y,s)$, and use some geometrical transformation to show that some
% coefficients in the expansion are zero.\\
- In Subsection \ref{P2}, we prove item (i) and the first part of item
(iii). In particular, 
% we use the alternative
% of Vel\'azquez \cite{Vtams93} given on page \pageref{resvel} to show
% that the neighboring blow-up points are located along some degenerate
% directions of some multilinear form, and compute those directions
% explicitly.
after a change of variables, we identify 3 possible
regimes for the convergence of
$a_n$:
% accumulating sequences :
superquadratic, quadratic and subquadratic.\\
- In Subsection \ref{P3}, we show that the only subquadratic regime follows a
$3/2$ law, and determine its constant.\\
- In
Subsection \ref{P4}, we focus on the quadratic regime, and show that its
``constant'' enjoys at most 3 possible values. This will give the
second part of item (iii), which clearly implies item (ii).\\
- Finally, in Subsection \ref{secunif}, we give the proof of some
propositions which were used in the previous subsections, and which
are devoted to some refinements of the behavior of the solution.

\subsection{Superlinear behavior for neighboring points}\label{P2}
%\textbf{Part \PUnBis:  
%  Preparation of the investigation of accumulating sequences}\label{P2}
%%%%%%%%%%%%%%%%%%%%%%%%%%
%%%%%%%%%%%%%%%%%%%%%%%%%%

Recalling the alternative of Vel\'azquez \cite{Vtams93} given on
page \pageref{resvel}, we see that locally near the origin, the blow-up set is located
along some degeneracy directions of the multilinear form
\eqref{defmult}. From the choice of the axes we made in
Theorem \ref{th0m} and \eqref{do},
%Part 1 of Subsection \ref{P1},
we know that the axis $\{y_1=0\}$ is a degeneracy
direction. Moreover, using
% Following the Taylor expansion \eqref{do}
% % \eqref{expw0-}
% and estimate
  \eqref{discriminant}, we remark that if
  \[
C_{4,1}^2-4C_{4,0}C_{4,2}=0\mbox{ and }C_{4,2}\neq 0,
\]
then, we may have another (different) degeneracy direction for the multilinear form
\eqref{defmult}, namely the line of equation
$C_{4,1}x_1+2C_{4,2}x_2=0$, no more. Thus, item (i) holds for the
sequence $a_n$. Since these
two straight lines are orthogonal (respectively) to $(0,1)$ and
$(C_{4,1}, 2C_{4,2})$, the first part of item (iii) follows too.

\medskip

% Let us  consider some sequence of non-zero blow-up
% points $a_n$ converging to the origin.
Up to a rotation and symmetry
of the axes, together with the extraction of a subsequence still denoted the
same, we may assume that $a_n$ converges along the $x_2$-axis,
(which means that
\begin{equation}\label{one}
  \aa = o(\bb)
  %\mbox{ as }n\to \infty,
\end{equation}
as $n\to \infty$), and that
\begin{equation}\label{pos}
\forall n\in \m N,\;\aa\ge 0\mbox{ and }\bb \ge 0.
\end{equation}
% These two hypotheses
% %The two assumptions
% are precisely what we assume in the statement of Theorem
% \ref{th0}, in order to make the results clearer.
Note that after such a change of variables, the Taylor
expansion
\eqref{do}
%\eqref{expw0-}
remains valid, with
possibly different
%other
coefficients, denoted $C_{4,j}$ for $j=0,1,2$ and $C_{5,j}$ for $j=1,\dots,4$.

\medskip

Up to further extracting a subsequence still denoted the same, we are in
one of the following cases as $n\to \infty$:\label{3cases}\\
\textbf{- (subquadratic regime)} with $\aa \gg \bb^2$,\\
\textbf{- (quadratic regime)} with $\aa \sim L\bb^2$ for some $L>0$,\\
\textbf{- (superquadratic regime)} with $\aa \ll \bb^2$.

\medskip

In the two following
subsections,
%parts,
we investigate the subquadratic then
the quadratic regimes for the sequence $a_n$, in order to conclude the
proof of Theorem \ref{cor0m}.

\bigskip

%%%%%%%%%%%%%%%%%%%%%%%%%%
%%%%%%%%%%%%%%%%%%%%%%%%%%
\subsection{
%\textbf{Part \PDeux:
  Subquadratic regimes for neighboring blow-up points}\label{P3}
%%%%%%%%%%%%%%%%%%%%%%%%%%
%%%%%%%%%%%%%%%%%%%%%%%%%%

In this
subsection,
% part,
we assume that
% we investigate the case where a sequence of neighboring
% points has a superquadratic behavior, namely when
\begin{equation}\label{noquad}
a_{n,1}\gg \bb^2\mbox{ as }n\to \infty.
\end{equation}
We will show that the sequence $a_n$ necessarily
follows a $3/2$ power law. We will also determine the ``constant'' in front
of such a law.

\medskip

Since $a_n\neq 0$, we see from \eqref{one} and \eqref{noquad} that
\begin{equation}\label{an2>0}
  \bb >0\mbox{ and }\aa>0.
\end{equation}
We proceed in 4
parts
%steps
to prove the $3/2$ law for $a_n$:\\
- In Part 1,
%- In Step 1,
we prove that $C_{4,2}=C_{4,1}=0$ and $C_{4,0}<0$.\\
- In Part 2.
% - In Step 2,
we prove that $C_{5,4}=0$ and $a_{n,1}=O\left(\bb^{\frac 32}\right)$ as $n\to \infty$.\\
% \sout{- In Step 2, we prove that $a_{n,1}=O\left(a_{n,2}^{\frac 43}\right)$ as $n\to \infty$.\\
% - In Step 3, we prove that $a_{n,1}=o\left(a_{n,2}^{\frac 43}\right)$ as $n\to \infty$.\\
% - In Step 4, we prove that $C_{5,4}=0$.\\
% - In Step \SQuatreBis, we prove that $a_{n,1}=O\left(a_{n,2}^{\frac 32}\right)$ as $n\to \infty$.\\}
- In Part 3,
%- In Step 3,
% (old Step \SQuatreTer),
assuming that $\frac {\aa}{\bb^{\frac 32}}$ has a non-zero
limit, we determine that limit in terms of the Taylor coefficients of
the solution.\\
- In Part 4,
%- In Step 4,
we rule out the case where  $\aa= o(\bb^{\frac 32})$.
% \sout{- In Step \SQuatreQuater, we start our strategy to show that the case
% where $\aa= o(\bb^{\frac 32})$
% never happens; proceeding by contradiction, we assume that it holds
% and follow the following steps
% to reach a contradiction.\\
% - In Step \SCinq, we prove that  $a_{n,1}=O\left(a_{n,2}^2|\log a_{n,2}|\right)$ as $n\to \infty$.\\
% - In Step \SSix, we give the Taylor expansion of $v_0(y,s)$ up to $O(se^{-\frac 52 s})$.\\
% - In Step \SSept, we prove that $C_{6,6}=0$. \\
% - In Step \SHuit, we prove that $C_{5,3}=0$. \\
% - In Step \SNeuf, we prove that $C_{6,5}=0$. \\
% - In Step \SDix, we give the Taylor expansion of $v_0(y,s)$ up to $O(s^2
% e^{-3 s})$.\\
% - In Step \SOnze, we reach a contradiction and conclude 
% that the case where  $\aa^2 = o(\bb^3)$
% never happens, finishing this way the classification of the
% superquadratic regimes to which Part \PDeux $ $ was devoted.
% }

\bigskip

%%%%%%%%%%%%%%%%%%%%%%%%%%
%%%%%%%%%%%%%%%%%%%%%%%%%%
\textbf{Part 1:
  %Step 1:
  Proof of the fact  that $C_{4,2}=C_{4,1}=0$ and $C_{4,0}<0$}
%%%%%%%%%%%%%%%%%%%%%%%%%%
%%%%%%%%%%%%%%%%%%%%%%%%%%

We claim that it is enough to prove that
\begin{equation}\label{c42-}
C_{4,2}=0.
\end{equation}
Indeed, if this holds, recalling that $C_{4,4}=C_{4,3}=0$ in the
multilinear form \eqref{defmult}, thanks to \eqref{do}, we get the
following simpler expression for the form:
%\eqref{c44} and \eqref{c43},
%the multilinear form \eqref{defmult} has the following simple expression:
\[
C_{4,0}y_1^4+C_{4,1}y_1^3y_2.
\]
Since the is nonpositive and non-zero as stated right after \eqref{defmult}, we necessarily have
\[
C_{4,1}=0
\] 
and
\begin{equation}\label{c40<0-}
C_{4,0}<0.
\end{equation}
Let us then focus on the proof of \eqref{c42-}. Proceeding by contradiction, we assume that
\begin{equation}\label{assume}
C_{4,2}\neq 0. 
\end{equation}
Let us reach a contradiction in this case.

\medskip

We proceed as we did in
Step 3 in the Proof of Theorem \ref{th0m} 
%Part 4 of Subsection \ref{P1}
to show that
$C_{m+1,m+1}=0$:
%$C_{5,5}=0$:
we
will derive information related to $w_b$ defined by \eqref{wbw000} in
Step 2 of the Proof of Theorem \ref{th0m}.
%Part 2 of Subsection \ref{P1}.
However, we will not use the expression \eqref{defb0} for $b$. We will instead take
\begin{equation}\label{defb1-}
b=a_n.
\end{equation}
There is also another difference with the proof of the fact that $C_{m+1,m+1}=0$, in the sense that the contradiction will follow from the behavior of $v_{b,1,0}$ instead of $v_{b,0,0}$. 
From \eqref{an2>0} and \eqref{one}, we 
introduce $B_n$ such that
\begin{equation}\label{defbn-}
a_{n,1} =B_n a_{n,2}\mbox{ with }B_n = o(1)\mbox{ as }n\to \infty.
\end{equation}
%from \eqref{one}.
Following \eqref{an2>0} and the fact that $a_n \to 0$, given any $A>0$
(to be take large enough later), we also introduce
\begin{equation}\label{defsan}
  \san\to +\infty\mbox{ as }n\to \infty
\end{equation}
  such that
\begin{equation}\label{an2}
a_{n,2} = Ae^{-\frac \san 2},\mbox{ hence }a_{n,1} = B_n a_{n,2} = A B_ne^{-\frac \san 2}. 
\end{equation}
In particular, the transformation \eqref{wbw000} reads here as 
\begin{equation}\label{wbw-}
w_b(y_b,s)=w_0(y,s)\mbox{ with }
y_1 = y_{b,1}+AB_ne^{\frac\tau 2},\;\;y_2 = y_{b,2}+Ae^{\frac \tau 2}\mbox{ and }\tau = s-\san.
\end{equation}
Using
\eqref{do},
%\eqref{v0} (which holds here) and recalling that $C_{5,5}=0$
%from Part 4 of Subsection \ref{P1},
we write a new version of \eqref{vb} adapted to our new choice of $b$ in \eqref{defb1-}:
\begin{align}
v_b(y_b,\san)&=e^{-\san}\sum_{j=0}^2C_{4,j}h_{4-j}(y_{b,1}+AB_n)h_{j}(y_{b,2}+A)\nonumber\\
&+e^{-\frac 32\san}\sum_{j=0}^4C_{5,j}h_{5-j}(y_{b,1}+AB_n)h_{j}(y_{b,2}+A)+\bar v_b(y_b,\san)\label{vb1--}
%+\hat v_b(y_b,s_0)
\end{align}
where $\bar v_b(y_b,\san)$ is defined in \eqref{defvbb} (with $s_0$
replaced by $\san$).
Using the binomial relation \eqref{binomial} together with the
orthogonality relation given in \eqref{ortho}, we may derive from
\eqref{vb1--} a long formula for $v_b(y_b,\san)$. In fact, we won't do
that, since our argument uses only the projections of $v_b(y_b,\san)$
on $h_0h_0$, $h_1h_0$ and $h_0h_1$, with the notation
\[
h_lh_j\equiv h_l(y_{b,1})h_j(y_{b,2}),
\]
%\footnote{\textbf{Est-ce que cette notation a bien \'et\'e d\'efinie avant ?}}, 
i.e. the projections on the expanding modes $\lambda=1$ and
$\lambda=\frac 12$ of the linear operator $\q L$ \eqref{defL}. For that reason, we will introduce a visual transcription of those 3 projections, in the form of a table whose columns are $v_{b,0,0}(\san)$,  $v_{b,1,0}(\san)$ and  $v_{b,0,1}(\san)$, and whose lines bear the name of the coeffients $C_{i,j}$ present in \eqref{vb1--}, together with the rest term $\bar v_b(y_b,\san)$. More precisely, this is the table:

\bigskip

\begin{center}
\begin{tabular}{|c|c|c|c|}
\hline
&$v_{b,0,0}(\san)$  &$v_{b,1,0}(\san)$ &$v_{b,0,1}(\san)$\\
\hline
&&&\\
$C_{4,i}$&$C_{4,i} A^4 B_n^{4-i}e^{- \san}$&$(4-i)C_{4,i}A^3 B_n^{3-i}e^{-\san}$&$iC_{4,i} A^3 B_n^{4-i}e^{-\san}$\\
% &&&\\
% $C_{4,0}$&$C_{4,0}A^4 B_n^4e^{-\san}$&$4C_{4,0} A^3 B_n^3e^{-\san}$&0\\
% &&&\\
% $C_{4,1}$&$C_{4,1}A^4 B_n^3e^{-\san}$&$3C_{4,1}A^3 B_n^2e^{-\san}$&$C_{4,1}A^3 B_n^3e^{-\san}$\\
% &&&\\
% $C_{4,2}$&$C_{4,2} A^4 B_n^2e^{-\san}$&$2C_{4,2} A^3 B_ne^{-\san}$&$2C_{4,2} A^3 B_n^2e^{-\san}$\\
\hline
&&&\\
$C_{5,i}$&$C_{5,i}A^5 B_n^{5-i}e^{-\frac 32 \san}$&$(5-i)C_{5,i} A^4 B_n^{4-i}e^{-\frac 32 \san}$&$iC_{5,i} A^4 B_n^{5-i}e^{-\frac 32 \san}$\\
% &&&\\
% $C_{5,0}$&$C_{5,0}A^5 B_n^5e^{-\frac 32 \san}$&$5C_{5,0}A^4 B_n^4e^{-\san}$&0\\
% &&&\\
% $C_{5,1}$&$C_{5,1} A^5 B_n^4e^{-\frac 32 \san}$&$4C_{5,1}A^4 B_n^3e^{-\san}$&$C_{5,1} A^4 B_n^4e^{-\san}$\\
% &&&\\
% $C_{5,2}$&$C_{5,2} A^5 B_n^3e^{-\frac 32 \san}$&$3C_{5,2}A^4 B_n^2e^{-\san}$&$2C_{5,2}A^4 B_n^3e^{-\san}$\\
% &&&\\
% $C_{5,3}$&$C_{5,3} A^5 B_n^2e^{-\frac 32 \san}$&$2C_{5,3} A^4 B_ne^{-\san}$&$3C_{5,3} A^4 B_n^2e^{-\san}$\\
% &&&\\
% $C_{5,4}$&$C_{5,4}A^5 B_ne^{-\frac 32 \san}$&$C_{5,4} A^4 e^{-\san}$&$4C_{5,4} A^4B_n e^{-\san}$\\
\hline
&&&\\
$\bar v_b(y_b,\san)$ &$\bar v_{b,0,0}(\san)$&$\bar v_{b,0,1}(\san)$  &$\bar v_{b,1,0}(\san)$\\ 
\hline
\end{tabular}
\end{center}

\bigskip

\noindent Arguing as with \eqref{vb00}, we see that
\begin{equation}\label{exp-bvb10}
|\bar v_{b,i,j}(\san)|\le C(A) \san e^{-2 \san}\mbox{ with }
(i,j)=(0,0),\;\;(1,0)\mbox{ or }(0,1).
\end{equation}
Using this, together with the table and the smallness of $B_n$ written in \eqref{defbn-}, we see that
% \begin{equation}\label{exp-vb10}
% v_{b,1,0}(\san) = \left[2A^3B_ne^{-\san}(C_{4,2}+O(B_n))
% +A^4e^{-\frac 32 \san}(C_{5,4}+O(B_n))\right]+\bar v_{b,1,0}(\san).
% \end{equation}
% Arguing as with \eqref{vb00}, we see that
% \begin{equation}\label{exp-bvb10}
% |\bar v_{b,1,0}(\san)|\le C(A) \san e^{-2 \san},
% \end{equation}
% hence 
\begin{align}
v_{b,1,0}(\san) =& 2A^3B_ne^{-\san}(C_{4,2}+O(B_n))
+A^4e^{-\frac 32 \san}(C_{5,4}+O(B_n))\label{vb10-}\\
&+O(C(A) \san e^{-2 \san})\nonumber
\end{align}
as $n\to \infty$.
In order to know which term is dominant between the two appearing in
the expansion of $v_{b,1,0}(\san)$, 
% Here, we have a discussion, depending on which term is dominant
% between the two appearing in the expansion of $v_{b,1,0}(\san)$. To
% help us decide which is dominant,
let us introduce the ratio
\begin{equation}\label{defrn-}
R_n = \frac{A^3B_ne^{-\san}}{A^4e^{-\frac 32 \san}}=\frac{B_n}{Ae^{-\frac\san 2}}= \frac{a_{n,1}}{a_{n,2}^2}\to \infty
\end{equation}
from \eqref{defbn-}, \eqref{an2} and \eqref{noquad}.
In particular, it holds that
\begin{equation}\label{maher}
  B_n \gg A e^{-\frac \san 2}\mbox{ as }n\to \infty.
\end{equation}
Since $C_{4,2}\neq 0$ by the contradiction hypothesis \eqref{assume}, using \eqref{vb10-}, we write
\begin{equation}\label{v1b0-}
v_{b,1,0}(\san) \sim 2A^3B_nC_{4,2}e^{-\san}\mbox{ as }n\to \infty.
\end{equation}
Let us explain now how we reach a contradiction, first formally, then in a
rigorous way.

\medskip

Proceeding formally first, we restart the argument from \eqref{wbw-} with $\tau
=s-\san\ge 0$ this time. All the estimates run smoothly and we
end-up with the following modification of the table (note that we evaluate functions at $s=\san+\tau$, hence exponential factors appear in the estimates, complying with the expanding nature of the considered projections): 

\medskip

\begin{center}
% \begin{table}
% \caption{\label{table1} Expanding directions on the solution}
\begin{tabular}{|c|c|c|c|}
\hline
&$v_{b,0,0}(\san+\tau)$  &$v_{b,1,0}(\san+\tau)$ &$v_{b,0,1}(\san+\tau)$\\
\hline
&&&\\
$C_{4,i}$&$C_{4,i}e^\tau A^4 B_n^{4-i}e^{- \san}$&$(4-i)C_{4,i}e^{\frac \tau 2} A^3 B_n^{3-i}e^{-\san}$&$iC_{4,i}e^{\frac\tau 2} A^3 B_n^{4-i}e^{-\san}$\\
% &&&\\
% $C_{4,0}$&$C_{4,0}e^\tau A^4 B_n^4e^{-\san}$&$4C_{4,0}e^{\frac \tau 2} A^3 B_n^3e^{-\san}$&0\\
% &&&\\
% $C_{4,1}$&$C_{4,1}e^\tau A^4 B_n^3e^{-\san}$&$3C_{4,1}e^{\frac \tau 2} A^3 B_n^2e^{-\san}$&$C_{4,1}e^{\frac \tau 2} A^3 B_n^3e^{-\san}$\\
% &&&\\
% $C_{4,2}$&$C_{4,2}e^\tau A^4 B_n^2e^{-\san}$&$2C_{4,2}e^{\frac \tau 2} A^3 B_ne^{-\san}$&$2C_{4,2}e^{\frac \tau 2} A^3 B_n^2e^{-\san}$\\
\hline
&&&\\
$C_{5,i}$&$C_{5,i}e^\tau A^5 B_n^{5-i}e^{-\frac 32 \san}$&$(5-i)C_{5,i}e^{\frac \tau 2} A^4 B_n^{4-i}e^{-\frac 32 \san}$&$iC_{5,i}e^{\frac \tau 2} A^4 B_n^{5-i}e^{-\frac 32 \san}$\\
% &&&\\
% $C_{5,0}$&$C_{5,0}e^\tau A^5 B_n^5e^{-\frac 32 \san}$&$5C_{5,0}e^{\frac \tau 2} A^4 B_n^4e^{-\san}$&0\\
% &&&\\
% $C_{5,1}$&$C_{5,1}e^\tau A^5 B_n^4e^{-\frac 32 \san}$&$4C_{5,1}e^{\frac \tau 2} A^4 B_n^3e^{-\san}$&$C_{5,1}e^{\frac \tau 2} A^4 B_n^4e^{-\san}$\\
% &&&\\
% $C_{5,2}$&$C_{5,2}e^\tau A^5 B_n^3e^{-\frac 32 \san}$&$3C_{5,2}e^{\frac \tau 2} A^4 B_n^2e^{-\san}$&$2C_{5,2}e^{\frac \tau 2} A^4 B_n^3e^{-\san}$\\
% &&&\\
% $C_{5,3}$&$C_{5,3}e^\tau A^5 B_n^2e^{-\frac 32 \san}$&$2C_{5,3}e^{\frac \tau 2} A^4 B_ne^{-\san}$&$3C_{5,3}e^{\frac \tau 2} A^4 B_n^2e^{-\san}$\\
% &&&\\
% $C_{5,4}$&$C_{5,4}e^\tau A^5 B_ne^{-\frac 32 \san}$&$C_{5,4}e^{\frac \tau 2} A^4 e^{-\san}$&$4C_{5,4}e^{\frac \tau 2} A^4B_n e^{-\san}$\\
\hline
&&&\\
$\bar v_b(y_b,\san+\tau)$ &$\bar v_{b,0,0}(\san+\tau)$&$\bar v_{b,0,1}(\san+\tau)$  &$\bar v_{b,1,0}(\san+\tau)$\\ 
\hline
\end{tabular}
%\end{table}
\end{center}

\bigskip

\noindent Selecting the column on $v_{b,1,0}(\san+\tau)$, we write 
\begin{equation}\label{v1b0t-}
v_{b,1,0}(\san+\tau) \sim 2e^{\frac \tau 2}A^3B_nC_{4,2}e^{-\san}\mbox{ as }n\to \infty,
\end{equation}
which shows an increase in the size of $v_{b,1,0}(\san+\tau)$ with
$\tau$. Taking an arbitrarily small $\delta_0>0$ then choosing $\tau = \tau_n$ such that
\[
2e^{\frac {\tau_n} 2}|A^3B_nC_{4,2}|e^{-\san}=\delta_0,
\]
which means that
\begin{equation}\label{deftau-}
\tau_n = 2 \san+2 \log \frac {\delta_0}{2|A^3B_nC_{4,2}|}\to \infty
\end{equation}
(use \eqref{defsan} and \eqref{defbn-}),
we see that
\begin{equation}\label{hadaf}
|v_{b,1,0}(\san+\tau_n)| \sim \delta_0,
\end{equation}
% and this violates the global bound given in \eqref{uij} below and
% yields the contradiction we are looking for
and this violates the following convergence, uniform with the respect
to the blow-up point:
%%%%%%%%%%%%%%%%%%%%%%%%%%%%%%%%%%%%%%%%
%%%%%%%%%%%%%%%%%%%%%%%%%%%%%%%%%%%%%%%%
\begin{prop}[Uniform convergence of $v_a(y,s)$ to $0$ when $a$ is a
  blow-up point]
  It holds that
\begin{equation}\label{uniliou}
\sup_{a\in \q S}\|v_a(s)\|_{L^2_\rho}\to 0\mbox{ as }s\to \infty.
\end{equation}
In particular, 
for any $i\in \m N$ and $j=0,\dots,i$ with $(i,j)\neq(0,0)$, we have 
\begin{equation}\label{uij}
\sup_{a\in \q S}|v_{a,i,j}(s)|\to 0\mbox{ as }s\to \infty.
\end{equation}
%as $s\to \infty$.
\end{prop}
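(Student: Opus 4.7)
The plan is a contradiction--compactness argument exploiting the monotonicity of the Giga--Kohn energy. Suppose \eqref{uniliou} fails: then there exist $\varepsilon>0$ and sequences $a_n\in\q S$, $s_n\to\infty$ with $\|v_{a_n}(s_n)\|_{L^2_\rho}\ge\varepsilon$. Since only a compact portion of $\q S$ is relevant for the use made of this proposition in the paper (where $a_n\to 0$), we can extract a subsequence so that $a_n\to a_\infty\in\q S$. I would then consider the time-shifted profiles $W_n(y,\tau):=w_{a_n}(y,s_n+\tau)$, defined for $\tau\ge -s_n-\log T$ and hence eventually for every fixed $\tau\in\m R$. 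The Giga--Kohn bound $\|u(t)\|_{L^\infty}\le C/(T-t)$ yields a uniform $L^\infty$ estimate on $(W_n)$, and standard parabolic regularity applied to \eqref{eqw} upgrades this to uniform $C^\infty_{loc}(\m R^2\times\m R)$ bounds; a diagonal extraction then produces $W_n\to W_\infty$ in $C^\infty_{loc}$, where $W_\infty$ is a bounded nonnegative entire solution of \eqref{eqw}.

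The heart of the argument is to identify $W_\infty\equiv\kappa$. For this I would use the Giga--Kohn Lyapunov functional
\[
E(w)=\int\Bigl(\tfrac12|\nabla w|^2+\tfrac{w^2}{2(p-1)}-\tfrac{|w|^{p+1}}{p+1}\Bigr)\rho\,dy,
\]
which is non-increasing along solutions of \eqref{eqw}, since $\frac d{ds}E(w(s))=-\int(\partial_s w)^2\rho\,dy\le 0$. For every blow-up point $a\in\q S$, classical results of Giga--Kohn give $E(w_a(s))\searrow E(\kappa)$; in particular $E(w_a(s))\ge E(\kappa)$, whence $E(W_n(\tau))\ge E(\kappa)$ for all $n,\tau$. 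For the matching upper bound, fix $\tau\in\m R$ and a large $s_0$: as soon as $s_n+\tau\ge s_0$, monotonicity gives $E(W_n(\tau))\le E(w_{a_n}(s_0))$, and continuous dependence on the base point yields $E(w_{a_n}(s_0))\to E(w_{a_\infty}(s_0))$ as $n\to\infty$; letting then $s_0\to\infty$ (using $a_\infty\in\q S$) produces $\limsup_n E(W_n(\tau))\le E(\kappa)$. Hence $E(W_\infty(\tau))\equiv E(\kappa)$ on $\m R$, and the dissipation identity forces $\partial_\tau W_\infty\equiv 0$, so $W_\infty$ is a bounded nonnegative stationary solution of \eqref{eqw}. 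Giga--Kohn's elliptic classification in the Sobolev subcritical range then leaves $W_\infty\in\{0,\kappa\}$, and the trivial case is excluded by $E(0)=0<E(\kappa)$, yielding $W_\infty\equiv\kappa$.

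To finish, the uniform $L^\infty$ bound on $W_n$ and the integrability of $\rho$ allow dominated convergence to upgrade the $C^\infty_{loc}$ convergence into $\|W_n(\cdot,0)-\kappa\|_{L^2_\rho}\to 0$, i.e.\ $\|v_{a_n}(s_n)\|_{L^2_\rho}\to 0$, contradicting the working assumption and proving \eqref{uniliou}. Estimate \eqref{uij} is then immediate: writing $v_{a,i,j}(s)=\langle v_a(s),k_{i-j}(y_1)k_j(y_2)\rangle_{L^2_\rho}$ and applying Cauchy--Schwarz in $L^2_\rho$ gives $|v_{a,i,j}(s)|\le C_{i,j}\|v_a(s)\|_{L^2_\rho}$, so \eqref{uniliou} implies \eqref{uij}.

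The main obstacle will be the verification that the limit $W_\infty$ is stationary. The Merle--Zaag Liouville theorem for \eqref{eqw} a priori allows, besides the constants $0$ and $\kappa$, a one-parameter family of heteroclinic solutions $\kappa(1+e^{\tau-\tau_0})^{-1/(p-1)}$ connecting $\kappa$ at $-\infty$ to $0$ at $+\infty$; ruling these out is exactly what the identity $E(W_\infty(\tau))\equiv E(\kappa)$ accomplishes. Establishing that identity cleanly is the delicate point, since it requires combining the monotonicity of $E$ with the \emph{asymptotic} value $E(\kappa)$ attained at every blow-up point $a_n$ and with the continuous dependence of $E(w_a(s_0))$ on the base point $a$ (to pass to the limit as $n\to\infty$ before letting $s_0\to\infty$).
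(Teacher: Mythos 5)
Your proof is correct in substance, and it fleshes out the contradiction–compactness argument that the paper merely sketches by reference to the Liouville theorem of \cite{MZcpam98}, \cite{MZma00} and to Proposition~2.2 in \cite{KRZihp11}. The difference lies in how the limiting ancient solution $W_\infty$ is classified. The cited route applies the Merle--Zaag Liouville theorem directly, which identifies $W_\infty$ among $\{0,\kappa\}$ and the one-parameter family of heteroclinics, and then eliminates the unwanted cases. You instead sandwich the Giga--Kohn energy $E(W_n(\tau))$ between $E(\kappa)$ (from the fact that each $a_n\in\q S$ forces $E(w_{a_n}(s))\searrow E(\kappa)$) and $\limsup_n E(w_{a_n}(s_0))=E(w_{a_\infty}(s_0))\to E(\kappa)$, so that $E(W_\infty(\tau))\equiv E(\kappa)$; the dissipation identity then makes $W_\infty$ stationary, and the much simpler Giga--Kohn Pohozaev classification of bounded positive steady states of \eqref{eqw} finishes the job. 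This buys you independence from the full parabolic Liouville theorem, at the price of having to justify the convergence $E(W_n(\tau))\to E(W_\infty(\tau))$ and the continuity of $a\mapsto E(w_a(s_0))$; both hold, but require the uniform $L^\infty$ bound and the polynomial gradient estimates that you invoke, and you should state them explicitly if this were written out in full. Two small points to flag. First, the extraction $a_n\to a_\infty\in\q S$ uses compactness and closedness of $\q S$ (or at least of the part you restrict to); you correctly observe that in the paper's applications $a_n\to 0$, so this is harmless, but as stated the proposition with $\sup_{a\in\q S}$ implicitly requires $\q S$ compact. Second, your deduction of \eqref{uij} from \eqref{uniliou} by Cauchy--Schwarz against $k_{i-j}(y_1)k_j(y_2)\in L^2_\rho$ coincides with the paper's.
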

%%%%%%%%%%%%%%%%%%%%%%%%%%%%%%%%%%%%%%%%
%%%%%%%%%%%%%%%%%%%%%%%%%%%%%%%%%%%%%%%%
\begin{proof}
  % This is a direct consequence of the Liouville theorem we
  % proved in \cite{MZma00} and \cite{MZcpam98}.
  Clearly, \eqref{uij} directly follows from \eqref{uniliou}, thanks to the $L^2_\rho$
  projection defined in \eqref{defvij}. As for \eqref{uniliou}, it is
  a direct consequence of the Liouville theorem of \cite{MZcpam98} and \cite{MZma00}. For a proof, one may adapt with no difficulty the proof of
  Proposition 2.2 page 11 in Khenissy, Rebai and Zaag
  \cite{KRZihp11}. 
\end{proof}
Clearly, a contradiction follows from \eqref{hadaf} and \eqref{uij} (note that $\san+\tau_n\to
\infty$ by \eqref{defsan} and \eqref{deftau-}), justifying that $C_{4,2}=0$.
Unfortunately, the change of variables starting from \eqref{wbw-} can
be justified only when $\tau$ stays bound, and this is not the case in
\eqref{deftau-}.

\medskip

Fortunately, there is a rigorous way to justify \eqref{v1b0t-}. One has
simply to take \eqref{v1b0-} as initial data at $s=\san$, then integrate  for $s\ge \san$ the
following ODE satisfied by $v_{b,1,0}(s)$
which we recall from \eqref{eqvij}:
\begin{equation}\label{eqvb10-}
v_{b,1,0}'(s) = \frac 12 v_{b,1,0}(s) + \int v_b(y,s)^2 \k_1(y_1)
\rho(y) dy,
  \end{equation}
where $k_1(y_1) = \frac {y_1}2$ was already introduced in \eqref{defk}. 

\medskip

In order to do so, we need to evaluate the size of $v_b(y,s)$, and this is possible if
we evaluate it at $s=\san$, then integrate the PDE satisfied by $v_b(y,s)$ for $s\ge
s_n$. Let us start then by
evaluating the size of  $v_b(y,s)$ at $s=\san$.\\
%initial data at $s=\san$.\\
Before that, recalling that
\eqref{do}
%\eqref{v0}-\eqref{interm0}
holds in $L^4_\rho$, we may use
the trick based on the Cauchy-Schwarz inequality we used for
\eqref{vb00} to prove that $\bar v_b(\san)$ defined in \eqref{defvbb}
and appearing in \eqref{vb1--} satisfies
%Noting that
\begin{equation}\label{vbar2}
\|\bar v_b(\san)\|_{L^2_\rho} \le C(A)\san e^{-2\san}.
\end{equation}
%as we did for \eqref{vb10-},
Hence, we may use \eqref{vb1--} together with the
binomial relation \eqref{binomial} in order to
show that 
\begin{equation}\label{fai}
\|v_b(y_b,\san) \|_{L^2_\rho} \le CA^2e^{-\san},
\end{equation}
for $n$ large enough.
Then, integrating the PDE \eqref{eqv}, we show that for $s\ge \san$, the solution will stay as small as its value at $s=\san$. More precisely, we claim the following:
%%%%%%%%%%%%%%%%%%%%%%%%%%
%%%%%%%%%%%%%%%%%%%%%%%%%%
\begin{prop}[Uniform bound for $v_{a_n}$]\label{propunif}
 There exists a universal constant $\cc_0>0$ such that for all $A\ge
 1$ and $\dd\ge 1$, for $n$ large enough, we have 
\[
\forall s\in [\san, \dd \san],
  %\forall s\ge \san,
  \;\;\|v_{a_n}(s)\|_{L^2_\rho}\le \cc_0 A^2e^{-\san}.
\]
\end{prop}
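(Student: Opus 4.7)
The plan is to run a bootstrap/continuity argument on the interval $[\san, \dd \san]$. I would set $V_n(s) := \|v_{a_n}(s)\|_{L^2_\rho}$; the bound $V_n(\san) \le C_1 A^2 e^{-\san}$ at the initial time is already given in \eqref{fai}. I would fix $\cc_0 \ge 2 C_1$ (universal) and define $s^* := \inf\{s \ge \san : V_n(s) \ge \cc_0 A^2 e^{-\san}\}$; the goal is to show, under the a priori bound $V_n \le \cc_0 A^2 e^{-\san}$ on $[\san, s^*)$, that $V_n(s^*) \le \tfrac12 \cc_0 A^2 e^{-\san}$, which rules out $s^* \le \dd \san$ for $n$ sufficiently large (depending on $A$ and $\dd$).

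To close the bootstrap I would decompose $v_{a_n}$ via the spectral decomposition of $\q L$ into three pieces: the unstable modes $(0,0), (1,0), (0,1)$ (eigenvalues $1, \tfrac12, \tfrac12$); the neutral modes with $i+j = 2$ (eigenvalue $0$); and the stable remainder $v_{a_n,-,3}$ (eigenvalues $\le -\tfrac12$). The $L^\infty$ bound \eqref{b1} gives $\|v_{a_n}^2\|_{L^2_\rho} \le C V_n(\sigma)$, so the bootstrap makes the quadratic source of size $\lesssim \cc_0 A^2 e^{-\san}$ in $L^2_\rho$. For $v_{a_n,-,3}$, Duhamel with the contractive semigroup $\|e^{\tau \q L} P_{-,3}\|_{L^2_\rho \to L^2_\rho} \le e^{-\tau/2}$ yields $\|v_{a_n,-,3}(s)\|_{L^2_\rho} \lesssim V_n(\san) + \cc_0 A^2 e^{-\san}$, acceptable for $\cc_0$ large. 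The three neutral modes obey ODEs whose right-hand side is actually quadratic in $v_{a_n}$, hence of size $\cc_0^2 A^4 e^{-2\san}$; integration over an interval of length $\le (\dd - 1)\san$ yields a contribution $\lesssim \dd \san \cc_0^2 A^4 e^{-2\san}$, which is $\ll A^2 e^{-\san}$ once $n$ is large enough so that $\san \to \infty$.

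The main obstacle is the unstable part, and specifically the marginal modes of eigenvalue $\tfrac12$. For the top mode $(0,0)$, I would invoke Proposition \ref{problo} on $w_{a_n}$, which is defined globally in similarity variables: since $a_n \in \q S$, one necessarily has $v_{a_n,0,0}(s) \le 0$ for every $s$, and combined with the lower bound $v_{a_n} \ge -1$ coming from $u \ge 0$, the mode is trapped so that the quadratic source alone governs its dynamics in \eqref{eqvij}. For the marginal modes $(1,0), (0,1)$, however, the naive Duhamel formula allows an amplification $e^{(s - \san)/2}$, potentially as large as $e^{(\dd - 1)\san/2}$, which is the principal difficulty. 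The plan is to combine (i) the uniform smallness $|v_{a_n,i,j}(\san)| = o_n(1)$ obtained from \eqref{uij} for $n$ large, (ii) the quadratic gain in the source term $|\int v_{a_n}^2 k_{i-j} k_j \rho \, dy| \le C V_n^2 \le C \cc_0^2 A^4 e^{-2\san}$, and (iii) translation-invariant versions of the blow-up criterion applied to shifted basepoints, which similarly constrain the first-order modes along the lines of the argument used already in Steps 2--3 of the proof of Theorem \ref{th0m}. Together, these ingredients should be sharp enough to absorb the exponential amplification and close the bootstrap with $\cc_0$ universal and $n$ sufficiently large; this last step is where I expect the main technical work.
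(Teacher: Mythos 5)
Your proposal correctly identifies the principal difficulty — the expanding modes $(0,0)$, $(1,0)$, $(0,1)$ (eigenvalues $1$ and $\tfrac12$), for which naive Duhamel integration over $[\san, \dd\san]$ gives an amplification $e^{(\dd-1)\san}$ or $e^{(\dd-1)\san/2}$ that can overwhelm the $e^{-\san}$ or $e^{-2\san}$ size of the data and source. But the mechanism you propose for closing this does not actually work, and you half-acknowledge this (``this last step is where I expect the main technical work''). The blow-up criterion of Proposition \ref{problo} only gives a one-sided bound $v_{a_n,0,0}\le 0$, which together with $v_{a_n}\ge -1$ traps $v_{a_n,0,0}$ in $[-1,0]$ — a bounded constraint, not a smallness constraint of order $A^2e^{-\san}$. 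Translated blow-up criteria and the quadratic gain likewise give only $O(1)$ or $O(\cc_0^2 A^4 e^{-2\san})$ per unit time, which after integrating against $e^{\tau}$ or $e^{\tau/2}$ over a window of length $(\dd-1)\san$ does not stay below $\cc_0 A^2 e^{-\san}$ for $\dd$ large. So as written, the bootstrap does not close on the unstable block.

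The paper sidesteps this entirely by invoking a result of Khenissy, R\'eba\"{\i} and Zaag \cite{KRZihp11} (recorded as \eqref{sim24}): uniformly in the blow-up point, $v_{a_n}(y,s)\sim P_{\{2\le i\le 4\}}(v_{a_n})$ as $s\to\infty$. That is, the unstable modes $i\le 1$ \emph{and} the high modes $i\ge 5$ are automatically dominated by the block $2\le i\le 4$, whose eigenvalues $1-\tfrac i2$ are all $\le 0$. Combined with the equivalence of norms \eqref{equiv} and the initialization \eqref{fai}, this reduces the whole estimate to a finite family of scalar ODEs \eqref{eqvij} with non-positive linear coefficient, so there is no exponential amplification to fight: the bootstrap closes immediately with a contribution $\lesssim \cc_0^2 A^4 e^{-2\san}(\dd-1)\san \ll A^2 e^{-\san}$ for $n$ large. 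Your treatment of the neutral block $i=2$ and the stable remainder is in the same spirit as the paper's, and your overall bootstrap scaffold is sound; what is missing is precisely the uniform asymptotic comparison \eqref{sim24}, which is the key input that makes the unstable modes harmless. Without it (or an equivalent substitute), the argument has a genuine gap.
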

%%%%%%%%%%%%%%%%%%%%%%%%%
%%%%%%%%%%%%%%%%%%%%%%%%%%
\begin{proof}
  See subsection \ref{secunif} below.
\end{proof} 
\begin{nb}
  The result can not hold if one replaces $v_{a_n}$ by $v_{x_0}$ where $x_0$
  is not a blow-up point. Indeed, from Giga and Kohn \cite{GKcpam89},
  we know in that case that $w_{x_0}(s)\to 0$, hence $v_{x_0}(s)\to -1$ as $s\to \infty$.
  \end{nb}
\begin{nb}
In fact, our next argument needs the same estimate with better norms,
namely the $L^4_\rho$. In order to justify that, we give two
%arguments:\\
facts:\\
(i) First, given an arbitrary $\tau_0>0$, note that estimate \eqref{fai} actually holds for all $s\in [\san-\tau_0,
\san]$, by the same argument (based on the table given right before
\eqref{v1b0t-}, which holds uniformly for $\tau\in [-\tau_0,0]$ with
the same proof). This will provide the $L^2_\rho$ bound of Proposition
\ref{propunif} for any
$s\in [\san, \dd \san]$.\\
%$s\ge \san-\tau_0$.\\
(ii) Second,
using Lemma \ref{lemVel},
% using parabolic regularity,
note that the $L^4_\rho$
estimate follows from the $L^2_\rho$ estimate, after some time shift
$\tau^*=\tau^*(4,2)$ (by the way, using the nonlinear equation
\eqref{eqv} satisfied by $v_{a_n}$ and the $L^\infty$ bound
\eqref{b1}, we see that $\partial_s|v|\le (\q L +C)|v|$ is the
distribution sense, hence Lemma \ref{lemVel} applies).  
% (see Lemma 2.1 page 139 in Herrero and
% Vel\'azquez \cite{HVihp93}, which holds also in higher dimension with
% the same proof).
Choosing the constant $\tau_0$ of item (i) equal
to $\tau^*$ gives the $L^4_\rho$ estimate for any $s\ge \san$.
\end{nb}
Note first from the definition \eqref{deftau-} of $\tau_n$ and
\eqref{maher} that there exists $\dd \ge 1$ such that $\san+\tau_n\in [\san,\dd\san]$.
Plugging the estimate of Proposition \ref{propunif}  in the quadratic term of \eqref{eqvb10-} (we
need in fact the $L^4_\rho$  estimate, which holds thanks to the
remark following Proposition \ref{propunif}), then integrating for
$s'\in [\san, s]$ (remember that $|v_{a_n,1,0}(s')|$ is uniformly
bounded from \eqref{b1}), we obtain (remember that $b=a_n$ from \eqref{defb1-}):
\begin{equation}\label{intvb10}
\forall s\in [\san, \dd \san],
  %\forall s\ge \san,
  \;\;|v_{b,1,0}(s)-e^{\frac \tau 2}v_{b,1,0}(\san)|\le Ce^{\frac \tau 2}A^4 e^{-2\san},\mbox{ where }\tau=s-\san.
\end{equation}
Using \eqref{v1b0-} and \eqref{maher}, we see that \eqref{v1b0t-} is
justified. Taking $\tau=\tau_n$ defined in \eqref{deftau-}, we see
that \eqref{hadaf} holds, which contradicts Proposition
\ref{propunif}. Therefore, \eqref{assume} doesn't hold. Thus,
$C_{4,2}=0$ and also $C_{4,1}=0$ and $C_{4,0}<0$ by the argument
presented at the beginning of this
part.
%step.

\bigskip

\textbf{Part 2:
  %Step 2:
  Proof of the fact that $a_{n,1}=O\left(a_{n,2}^{\frac
      32}\right)$ as $n\to \infty$}

This estimate will be achieved through 4 lemmas below, numbered
\ref{cl2} to \ref{cl5}, where we successively prove that  $a_{n,1}=O\left(a_{n,2}^{\frac
      43}\right)$, then  $a_{n,1}=o\left(a_{n,2}^{\frac
      43}\right)$, then $C_{5,4}=0$ and finally that $a_{n,1}=O\left(a_{n,2}^{\frac
      32}\right)$ as $n\to \infty$. Let us state the first lemma:

 %%%%%%%%%%%%%%%%%%%%%%%%%%%%
%%%%%%%%%%%%%%%%%%%%%%%%%%%%
\begin{lem}\label{cl2}It holds that $a_{n,1}=O\left(a_{n,2}^{\frac
      43}\right)$ as $n\to \infty$.
% \textbf{Step 2: Proof of the fact that $a_{n,1}=O\left(a_{n,2}^{\frac
%       43}\right)$ as $n\to \infty$}
\end{lem}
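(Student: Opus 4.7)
The plan is to mirror the strategy used in Part~1 to show $C_{4,2}=0$, exploiting now the fact that after Part~1 we have $C_{4,1}=C_{4,2}=0$ and $C_{4,0}<0$. I would argue by contradiction: suppose $\aa/\bb^{4/3}\to\infty$ along a subsequence, which in the notation of \eqref{defbn-}--\eqref{an2} is equivalent to $B_n e^{\san/6}\to\infty$.

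Taking $b=a_n$ and reading off the table of Part~1, the $C_{4,i}$ row contributes only through $i=0$, so
\[
v_{b,1,0}(\san)=4C_{4,0}A^3 B_n^3 e^{-\san}+\sum_{i=0}^{4}(5-i)C_{5,i}A^4 B_n^{4-i}e^{-\frac 32 \san}+\bar v_{b,1,0}(\san),
\]
with $|\bar v_{b,1,0}(\san)|\le C(A)\san e^{-2\san}$ by \eqref{exp-bvb10}. Among the quintic contributions the largest in $B_n$ is the $i=4$ term $C_{5,4}A^4 e^{-\frac{3}{2}\san}$, and the ratio of the $C_{4,0}$ term to it equals $(4C_{4,0}/A)B_n^3 e^{\san/2}$, which tends to infinity precisely under the hypothesis $B_n\gg e^{-\san/6}$. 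A similar (but easier) comparison shows the remaining $C_{5,i}$ terms and the $\bar v_{b,1,0}$ remainder are also negligible, so
\[
v_{b,1,0}(\san)\sim 4C_{4,0}A^3 B_n^3 e^{-\san}\quad\text{as }n\to\infty.
\]

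I would then propagate this lower bound along the ODE \eqref{eqvb10-}. Using Proposition~\ref{propunif} together with its $L^4_\rho$ version mentioned in the subsequent remark, the quadratic term in \eqref{eqvb10-} is bounded by $CA^4 e^{-2\san}$ on the whole interval $[\san,\bar d\,\san]$ for any fixed $\bar d\ge 1$, yielding
\[
\bigl|v_{b,1,0}(\san+\tau)-e^{\tau/2}v_{b,1,0}(\san)\bigr|\le Ce^{\tau/2}A^4 e^{-2\san},\qquad \tau\in[0,(\bar d-1)\san].
\]
This correction is negligible compared with the main term as long as $B_n^3\gg A e^{-\san}$, which is again ensured by $B_n\gg e^{-\san/6}$. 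Choosing $\tau_n$ so that the main term equals some fixed $\delta_0>0$ produces $\tau_n=2\san-6\log B_n+O(1)$.

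The key (and I expect the only delicate) point is to verify that $\san+\tau_n$ stays within $[\san,\bar d\,\san]$ so that Proposition~\ref{propunif} remains applicable. But $B_n e^{\san/6}\to\infty$ forces $-\log B_n<\san/6$ for $n$ large, hence $\tau_n\le 3\san+O(1)$ and $\san+\tau_n\le 4\san$; thus $\bar d=4$ suffices. With this book-keeping in place, $|v_{b,1,0}(\san+\tau_n)|\sim\delta_0$ while $\san+\tau_n\to\infty$, which contradicts the uniform convergence \eqref{uij} at the blow-up points $a_n$ and establishes $\aa=O(\bb^{4/3})$.
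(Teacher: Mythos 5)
Your proof is correct and follows essentially the same route as the paper: arguing by contradiction, using the Part~1 cancelations $C_{4,1}=C_{4,2}=0$ to isolate $4C_{4,0}A^3 B_n^3 e^{-\san}$ as the dominant contribution to $v_{b,1,0}(\san)$, then propagating through the ODE \eqref{eqvb10-} with Proposition~\ref{propunif} to reach a contradiction with \eqref{uij}. Your explicit book-keeping that $\bar d=4$ suffices makes precise the paper's remark that $\san+\tau_n\le\dd\san$ for some $\dd\ge1$.
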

%%%%%%%%%%%%%%%%%%%%%%%%%%%%
%%%%%%%%%%%%%%%%%%%%%%%%%%%%
\begin{proof}
  Since $C_{4,2}=C_{4,1}=0$ by
  Part 1,
%Subsection \ref{P1},
  arguing as in that part, we can
get an improved version of \eqref{vb10-}, thanks to the table given right
before
that estimate, the smallness of $B_n$ written in
\eqref{defbn-} together with \eqref{exp-bvb10}:
\[
v_{b,1,0}(\san) =4A^3C_{4,0}B_n^3 e^{-\san}+ A^4e^{-\frac 32 \san}(C_{5,4}+O(B_n))+O(\san e^{-2 \san})
\]
as $n\to \infty$. Here, a discussion arises, according to the ratio of the two terms appearing in this expression:
\[
\frac{A^3B_n^3 e^{-\san}}{A^4 e^{-\frac 32 \san}}=
\frac{B_n^3}{Ae^{-\frac\san 2}}=
\frac{a_{n,1}^3}{a_{n,2}^4},
\]
by definitions \eqref{defbn-} and \eqref{an2} of $B_n$ and
$\san$. Arguing by contradiction to prove
Lemma \ref{cl2}
%the identity in the title of the current step
and recalling that $C_{4,0}\neq 0$ by \eqref{c40<0-}, we see that we have
\begin{equation}\label{haspot}
B_n^3 \gg A e^{-\frac \san 2}\mbox{ and }
v_{b,1,0}(\san) \sim 4A^3C_{4,0}B_n^3 e^{-\san}\mbox{ as }n\to \infty,
\end{equation}
at least for a subsequence denoted the same. 
Arguing as we did for \eqref{v1b0t-} and using this time the table
right before
that estimate, we may show that 
\begin{equation}\label{vb10t-}
v_{b,1,0}(\san+\tau) \sim 4e^{\frac \tau 2} A^3B_n^3 C_{4,0}e^{- \san}\mbox{ as }n\to \infty, 
\end{equation}
then, derive a contradiction as in \eqref{hadaf}, with
\[
\tau_n = -2 \log\frac{4|C_{4,0}|A^3B_n^3e^{-\san}}{\delta_0}
%=\san-2 \log \frac{4|C_{4,0}|A^3B_n^3e^{-\frac \san 2}}{\delta_0}
\]
which satisfies $\tau_n \to \infty$ from \eqref{defbn-},  hence $\san
+\tau_n \to \infty$ from \eqref{defsan}
and also $\san+\tau_n \le \dd\san$ for some $\dd \ge 1$, from \eqref{haspot}.
The question then reduces to justify \eqref{vb10t-}, for this choice
of $\tau_n$, and this can be done exactly as in
Part 1, 
%Step 1,
relying on Proposition \ref{propunif}.
Thus, Lemma \ref{cl2} is proved.
\end{proof}

\bigskip

Now, we state the second lemma of
this part:
%the step:
%%%%%%%%%%%%%%%%%%%%%%%%%%%%
%%%%%%%%%%%%%%%%%%%%%%%%%%%%
\begin{lem} \label{cl3}It holds that $a_{n,1}=o\left(a_{n,2}^{\frac 43}\right)$ as $n\to \infty$.
\end{lem}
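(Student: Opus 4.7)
I argue by contradiction. Suppose $\aa/\bb^{4/3}$ does not converge to $0$; by Lemma~\ref{cl2}, $\aa^3/\bb^4$ is bounded, so after passing to a subsequence still denoted the same, $\aa^3/\bb^4\to\ell$ for some $\ell>0$, equivalently $B_n^3\sim\ell\,Ae^{-\san/2}$ via \eqref{defbn-} and \eqref{an2}. In this critical balance the $C_{4,0}$ contribution and the dominant $C_{5,4}$ contribution in the table preceding \eqref{v1b0t-} are of exactly the same order, while all the other $C_{5,j}$ contributions carry extra powers of $B_n\to 0$. Collecting them, and using \eqref{exp-bvb10} to absorb $\bar v_b$, I would read off
\begin{equation*}
v_{b,1,0}(\san)\sim (4\ell C_{4,0}+C_{5,4})\,A^4 e^{-\frac{3}{2}\san}\quad\text{and}\quad
v_{b,0,0}(\san)\sim \ell^{1/3}(\ell C_{4,0}+C_{5,4})\,A^{16/3} e^{-\frac{5}{3}\san}.
\end{equation*}

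The key algebraic observation is that these two leading coefficients cannot vanish simultaneously: their difference equals $3\ell C_{4,0}$, which is nonzero since $\ell>0$ and $C_{4,0}<0$ by Part~1. Hence at least one of the two expanding modes $(i,j)\in\{(1,0),(0,0)\}$ has a nonzero leading term at $s=\san$.

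For this selected mode I then close the argument exactly as in Lemma~\ref{cl2}. Integrating the ODE \eqref{eqvij} and estimating the quadratic term via the $L^4_\rho$ version of Proposition~\ref{propunif} (cf. the remark after its statement) yields, for $\tau\ge 0$ with $\san+\tau\le \dd\san$,
\begin{equation*}
v_{b,i,j}(\san+\tau)=e^{(1-\frac{i+j}{2})\tau}v_{b,i,j}(\san)+O\!\left(e^{(1-\frac{i+j}{2})\tau}A^4 e^{-2\san}\right).
\end{equation*}
Choosing $\tau_n\sim 3\san$ in case $(1,0)$ (respectively $\tau_n\sim\tfrac{5}{3}\san$ in case $(0,0)$) so that the leading term attains a fixed small value $\delta_0>0$, I verify that $\san+\tau_n\le \dd\san$ for $\dd=4$ (resp.\ $\dd=3$) and that the error is $O(A^4 e^{-\san/2})=o(1)$. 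This forces $|v_{b,i,j}(\san+\tau_n)|\sim\delta_0$, contradicting the uniform bound $\|v_{a_n}(s)\|_{L^2_\rho}\le\cc_0 A^2 e^{-\san}\to 0$ of Proposition~\ref{propunif}.

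The main obstacle, beyond the machinery already deployed in Lemma~\ref{cl2}, is to rule out the degenerate single-mode cancellation $4\ell C_{4,0}+C_{5,4}=0$ that a one-projection analysis cannot exclude. The trick is to pair the mode $(1,0)$ used in Lemma~\ref{cl2} with the top mode $(0,0)$: these produce two distinct linear combinations of $\ell C_{4,0}$ and $C_{5,4}$ with coefficient ratios $4:1$ versus $1:1$, and it is precisely this rank-two feature that precludes any simultaneous cancellation once $\ell>0$ and $C_{4,0}\neq 0$, thereby promoting Lemma~\ref{cl2}'s $O(\bb^{4/3})$ to the strict $o(\bb^{4/3})$ claimed here.
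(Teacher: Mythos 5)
Your proof is correct, but it follows a genuinely different route from the paper's. The paper argues in two stages: it first projects onto $h_0(y_{b,1})h_1(y_{b,2})$, a mode which receives \emph{no} contribution from $C_{4,0}$ whatsoever (because $h_0\equiv 1$ translates trivially, so the $C_{4,0}\,e^{-s}h_4(y_1)h_0(y_2)$ term produces nothing along $h_0h_1$), so that $v_{b,0,1}(\san)$ isolates $C_{5,4}$ cleanly and the usual growth argument yields $C_{5,4}=0$; it then re-runs the argument on $v_{b,1,0}$, now dominated solely by the $C_{4,0}$-term, to conclude $C_{4,0}=0$, contradicting \eqref{c40<0-}. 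You instead track the two expanding modes $h_1h_0$ and $h_0h_0$ simultaneously, read off at leading order the two $\ell$-weighted combinations $4\ell C_{4,0}+C_{5,4}$ and $\ell C_{4,0}+C_{5,4}$, and close by a rank-two observation: the $2\times 2$ coefficient matrix with rows $(4\ell,1)$ and $(\ell,1)$ has determinant $3\ell\neq 0$, so these two leading coefficients cannot vanish together once $C_{4,0}\neq 0$. What the paper's choice of $h_0h_1$ buys is a clean, single-mode elimination of $C_{4,0}$; what yours buys is dispensing with the intermediate step $C_{5,4}=0$ altogether, at the price of carrying the $C_{4,0}$ contribution in both $v_{b,1,0}$ and $v_{b,0,0}$. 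Both approaches close the contradiction in the same way, via Proposition~\ref{propunif} (with the $L^4_\rho$ upgrade) and the ODE integration. One small remark on your $(0,0)$-branch: $4\ell C_{4,0}+C_{5,4}=0$ forces the other leading coefficient $\ell C_{4,0}+C_{5,4}=-3\ell C_{4,0}>0$, so $v_{b,0,0}(\san)>0$ for $n$ large, and one can invoke the blow-up criterion (Proposition~\ref{problo}) directly at $s=\san$ without even integrating the ODE.
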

% \textbf{Step 3: Proof of the fact that
% $a_{n,1}=o\left(a_{n,2}^{\frac 43}\right)$ as $n\to \infty$}
%%%%%%%%%%%%%%%%%%%%%%%%%%%%
%%%%%%%%%%%%%%%%%%%%%%%%%%%%
\begin{proof}
Arguing by contradiction, we assume that up to some subsequence (still denoted the same), we have
\begin{equation}\label{cont}
\frac{a_{n,1}}{a_{n,2}^{\frac 43}}\to L\mbox{ as } n\to \infty,
\end{equation}
for some $L> 0$. By definitions \eqref{defbn-} and \eqref{an2} of $B_n$ and $\san$, we see that
\begin{equation}\label{bnsize}
B_n \sim LA^{1/3}e^{-\frac{\san}6}\mbox{ as } n\to \infty.
\end{equation}
Using \eqref{vb1--} as in
Part 1 and Lemma \ref{cl2},
%Steps 1 and 2,
we may use
\eqref{exp-bvb10} and
the table right
before
it
% \eqref{exp-bvb10}
together with \eqref{bnsize} to derive the
 following expression for another coordinate, namely $v_{b,0,1}(\san)$ (there
 is a great advantage in considering this coordinate instead of its 
 twin  $v_{b,1,0}(\san)$, in the sense that with  $v_{b,0,1}(\san)$,
 there is no contribution of $C_{4,0}$) :
\begin{align*}
  v_{b,0,1}(\san)
&= A^4B_n e^{-\frac 32 \san}(4C_{5,4}+O(B_n))+O\left(C(A)\san e^{-2 \san}\right)\\
&  =4A^{4+\frac 13}L C_{5,4}e^{-\frac 53 \san}+O\left(e^{-\frac{11}6 \san}\right).
\end{align*}
By the same argument as in
Part 1 and Claim \ref{cl2}, 
%Steps 1 and 2,
we see that $C_{5,4}=0$ (the justification follows also in the same
way, thanks to Proposition \ref{propunif}).

\medskip

Repeating the same argument presented here in this step, we derive the following expression for the other coordinate (use the table right before \eqref{exp-bvb10}):
\begin{align*}
  v_{b,1,0}(\san)
&=e^{\frac \tau 2}\left\{ 4A^3C_{4,0}B_n^3 e^{-\san}+O(A^4 B_ne^{-\frac 32
    \san}) +O\left(C(A)\san e^{-2 \san}\right)
\right\}\\
& \sim 4e^{\frac \tau 2}A^4L^3 C_{4,0}e^{-\frac 32 \san}.
\end{align*}
Again, the growing factor implies that $C_{4,0}=0$, which is a contradiction by \eqref{c40<0-}. Thus, \eqref{cont} doesn't hold and
Lemma \ref{cl3} holds.
%the estimate in the title of Step 3 holds.
As for the rigorous proof,
it follows exactly as in
Part 1,
%Step 1,
thanks to Proposition \ref{propunif}.
This concludes the proof of Lemma \ref{cl3}.
\end{proof}

%\bigskip

Now, this is the third lemma of the
part:
%step:
%%%%%%%%%%%%%%%%%%%%%%%%%%%%
%%%%%%%%%%%%%%%%%%%%%%%%%%%%
\begin{lem} \label{cl4} It holds that $C_{5,4}=0$.
  \end{lem}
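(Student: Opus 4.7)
The plan is to run the contradiction scheme of Lemmas \ref{cl2} and \ref{cl3} one more time with $b=a_n$, but projecting this time onto $v_{b,0,1}(\san)$ rather than $v_{b,1,0}(\san)$. The key observation is that in the third column of the table preceding \eqref{exp-bvb10}, each $C_{4,i}$ entry carries a multiplicative factor of $i$, which annihilates $C_{4,0}$, the only nonzero member of that row by Part 1. Consequently no $e^{-\san}$ term survives in $v_{b,0,1}(\san)$, and the dominant contribution for small $B_n$ in the $C_{5,\cdot}$ row is precisely $4 C_{5,4} A^4 B_n e^{-\frac 32 \san}$. This is exactly the role the coefficient $C_{5,4}$ played in the computation already carried out inside the proof of Lemma \ref{cl3}.

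Reproducing that calculation, which used only $C_{4,1}=C_{4,2}=0$ together with the remainder estimate \eqref{exp-bvb10}, I obtain
\[
v_{b,0,1}(\san) = 4 C_{5,4} A^4 B_n e^{-\frac 32 \san} + O\bigl(A^5 B_n^2 e^{-\frac 32 \san}\bigr) + O\bigl(C(A)\san e^{-2\san}\bigr).
\]
The current hypotheses provide a two-sided sandwich on $B_n$: the subquadratic assumption \eqref{noquad}, equivalently \eqref{maher}, gives $B_n \gg A e^{-\san/2}$, while Lemma \ref{cl3} gives $B_n = o(A^{1/3}e^{-\san/6})$. The lower bound makes $A^4 B_n e^{-\frac 32 \san}$ strictly dominate $C(A)\san e^{-2\san}$ once $A$ is fixed and $n$ is large, and the upper bound forces $B_n \to 0$, so the $O(B_n^2)$ correction is negligible. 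Hence, assuming toward contradiction that $C_{5,4}\neq 0$, we get
\[
v_{b,0,1}(\san) \sim 4 C_{5,4} A^4 B_n e^{-\frac 32 \san}.
\]

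To close the argument, I would integrate forward the ODE
\[
v_{b,0,1}'(s) = \tfrac 12 v_{b,0,1}(s) + \int v_b(y,s)^2 k_0(y_1) k_1(y_2) \rho(y)\, dy
\]
on $[\san, \san+\tau]$, controlling the quadratic integral by $C A^4 e^{-2\san}$ via Proposition \ref{propunif} and its $L^4_\rho$ variant, exactly as was done for \eqref{intvb10}. This yields $v_{b,0,1}(\san+\tau) = e^{\tau/2} v_{b,0,1}(\san) + O\bigl(e^{\tau/2} A^4 e^{-2\san}\bigr)$, and since $B_n \gg A e^{-\san/2}$ the first term dominates, so $v_{b,0,1}(\san+\tau) \sim e^{\tau/2}\cdot 4 C_{5,4} A^4 B_n e^{-\frac 32 \san}$. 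Selecting $\tau_n = 3\san + 2\log\frac{\delta_0}{4|C_{5,4}|A^4 B_n}$ produces $|v_{b,0,1}(\san+\tau_n)|\sim \delta_0$, contradicting the uniform decay \eqref{uij} applied to $a_n\in \q S$. The one technical point to verify is that the integration window fits inside $[\san,\dd\san]$ where Proposition \ref{propunif} applies: $B_n \to 0$ gives $\tau_n\to \infty$, while $B_n \gg A e^{-\san/2}$ keeps $\tau_n \le 4\san + O(1)$, so some fixed $\dd\ge 5$ works uniformly for $n$ large. This is the only place where the two-sided pinch on $B_n$ is used in an essential way, and it is exactly the same mechanism as in the previous two lemmas.
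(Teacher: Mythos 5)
Your plan of projecting on $v_{b,0,1}$ rather than $v_{b,1,0}$ has a genuine gap that, as far as I can see, cannot be patched with the information available at this stage. The paper's proof of Lemma~\ref{cl4} instead projects on $v_{b,1,0}$, and the difference is decisive.

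Reading off the table preceding \eqref{exp-bvb10}, the $C_{5,4}$ contribution to $v_{b,0,1}(\san)$ is $4C_{5,4}A^4 B_n e^{-\frac 32\san}$, carrying an extra factor of $B_n$, whereas the $C_{5,4}$ contribution to $v_{b,1,0}(\san)$ is $C_{5,4}A^4 e^{-\frac 32\san}$, with no $B_n$ at all (since $B_n^{4-i}|_{i=4}=1$). This matters because the Taylor remainder $\bar v_{b,i,j}(\san)$ is controlled only by $C(A)\san e^{-2\san}$ via \eqref{exp-bvb10} (the $\san$ is forced by \eqref{interm0}). In the paper's choice, the ratio $\dfrac{A^4 e^{-3\san/2}}{\san e^{-2\san}}=\dfrac{A^4 e^{\san/2}}{\san}\to\infty$ trivially, so the $C_{5,4}$ term is always dominant and the upper bound $B_n^3 \ll A e^{-\san/2}$ from Lemma~\ref{cl3} is only needed to kill the competing $4C_{4,0}A^3B_n^3 e^{-\san}$ term. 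In your choice, the ratio is
\[
\frac{A^4 B_n e^{-\frac 32\san}}{\san e^{-2\san}}=\frac{A^4 B_n}{\san\, e^{-\san/2}}=\frac{A^3 R_n}{\san},\qquad R_n=\frac{a_{n,1}}{a_{n,2}^2},
\]
and all the subquadratic hypothesis \eqref{noquad}--\eqref{maher} gives is $R_n\to\infty$, not $R_n\gg\san$. Thus if, say, $R_n\sim\sqrt{\san}$ (a regime fully compatible with $Ae^{-\san/2}\ll B_n \ll A^{1/3}e^{-\san/6}$), your candidate dominant term is swamped by the remainder and $v_{b,0,1}(\san)\sim 4C_{5,4}A^4 B_n e^{-3\san/2}$ is false. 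Your sentence ``the lower bound makes $A^4 B_n e^{-\frac 32\san}$ strictly dominate $C(A)\san e^{-2\san}$'' is therefore incorrect; in fact this is exactly the threshold that the paper isolates in Step~1 of Part~4 (see \eqref{ratio}), where the comparison with $a_{n,2}^2|\log a_{n,2}|$ is treated by a separate contradiction argument precisely because it cannot be settled by \eqref{maher} alone. The fix is simply to follow the paper and project on $v_{b,1,0}$, where the $C_{5,4}$ term has no $B_n$ factor.
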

  % \textbf{Step 4: Proof of the fact that $C_{5,4}=0$}
  %%%%%%%%%%%%%%%%%%%%%%%%%%%%
%%%%%%%%%%%%%%%%%%%%%%%%%%%%
\begin{proof}
Proceeding by contradiction, we assume that
\begin{equation}\label{c54n0}
C_{5,4}\neq 0.
\end{equation}
This time, the argument follows from the behavior of
$v_{b,1,0}(s)$. Using
\eqref{maher}
% \eqref{noquad}
and
Lemma \ref{cl3},
%Step 3,
we see by definitions \eqref{defbn-} and \eqref{an2} of $B_n$ and $\san$ that
\begin{equation}\label{bnsize1}
Ae^{-\frac \san 2}\ll B_n \ll A^{\frac 13} e^{-\frac \san 6}\mbox{ as }n\to \infty.
\end{equation}
Proceeding as before, we may use the table right before
\eqref{v1b0t-}
% (recalling that in \eqref{vb1--}, $C_{4,1}=C_{4,2}
% %=C_{5,5}
% =0$ from
% Part 1)
%of the current subsection
%together with \eqref{do}
to derive that  
\begin{equation*}%\label{vb00s}
v_{b,1,0}(\san+\tau)\sim A^4  C_{5,4} e^{\frac \tau 2} e^{-\frac 32 \san}\mbox{ as }n\to \infty
\end{equation*}
(recall that in \eqref{vb1--}, $C_{4,1}=C_{4,2}=0$ from Part 1).
The growth factor $e^{\frac\tau 2}$ allows to get a contradiction as
in previous steps. Thus, \eqref{c54n0} doesn't hold and
$C_{5,4}=0$. The rigorous justification again follows exactly as in
Part 1,
% Step 1,
thanks to Proposition \ref{propunif}. This concludes the proof
of Lemma \ref{cl4}.
\end{proof}

\bigskip

Now, this is the final lemma of the
part:
%step:
%%%%%%%%%%%%%%%%%%%%%%%%%%%%%%%%%%%%
%%%%%%%%%%%%%%%%%%%%%%%%%%%%%%%%%%%%
\begin{lem}\label{cl5} It holds that $\aa = O(\bb^{3/2})$ as $n \to \infty$.
  \end{lem}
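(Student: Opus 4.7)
The plan is to mirror exactly the argument used to prove Lemmas \ref{cl2}, \ref{cl3}, \ref{cl4}, and combine the cancelations $C_{4,1}=C_{4,2}=0$ (from Part 1), $C_{5,4}=0$ (Lemma \ref{cl4}), and the bound $a_{n,1}=o(a_{n,2}^{4/3})$ (Lemma \ref{cl3}). Proceeding by contradiction, I would assume that along a subsequence still denoted the same,
\[
\frac{a_{n,1}}{a_{n,2}^{3/2}}\to +\infty \quad\text{as }n\to\infty,
\]
so that, with $B_n=a_{n,1}/a_{n,2}$ and $a_{n,2}=Ae^{-\san/2}$ as in \eqref{defbn-}--\eqref{an2}, one has $B_n^2 e^{\san/2}/A=a_{n,1}^2/a_{n,2}^3\to\infty$ while Lemma \ref{cl3} still gives $B_n\ll A^{1/3}e^{-\san/6}$.

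The key step is to re-examine the expansion \eqref{vb1--} of $v_b(y_b,\san)$ with $b=a_n$. Using the table preceding \eqref{v1b0t-}, together with the cancelations $C_{4,1}=C_{4,2}=0$ and $C_{5,4}=0$, and with the rest term controlled by \eqref{exp-bvb10}, the projection on $h_1(y_{b,1})h_0(y_{b,2})$ reduces to
\[
v_{b,1,0}(\san)=4A^3 C_{4,0} B_n^3 e^{-\san}+O\!\left(A^4 B_n e^{-\tfrac{3}{2}\san}\right)+O\!\left(\san e^{-2\san}\right).
\]
Comparing the first two terms via the ratio $B_n^2 e^{\san/2}/A=a_{n,1}^2/a_{n,2}^3$, the contradiction hypothesis makes the first term dominate, while the $O(\san e^{-2\san})$ error is negligible since $B_n^3 e^{-\san}\gg e^{-2\san}$ (as $B_n\gg Ae^{-\san/2}$ from \eqref{maher}). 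Since $C_{4,0}\neq 0$ by \eqref{c40<0-}, this yields
\[
v_{b,1,0}(\san)\sim 4A^3 C_{4,0} B_n^3 e^{-\san}\quad\text{as }n\to\infty.
\]

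Taking this as initial data at $s=\san$, I would integrate the ODE \eqref{eqvb10-} on $[\san,\san+\tau]$ using Proposition \ref{propunif} (and the $L^4_\rho$ version in the subsequent remark) to control the quadratic term, obtaining
\[
v_{b,1,0}(\san+\tau)\sim 4e^{\tau/2}A^3 C_{4,0}B_n^3 e^{-\san}
\]
as $n\to\infty$, uniformly on compact $\tau$-intervals. Choosing a small $\delta_0>0$ and
\[
\tau_n=-2\log\frac{4|A^3 C_{4,0}|B_n^3 e^{-\san}}{\delta_0},
\]
a short computation shows $\tau_n\to\infty$ (because $B_n^3 e^{-\san}\le a_{n,1}^3/(A^2 a_{n,2})\le a_{n,2}^{3}/A^2\to 0$ using Lemma \ref{cl3}) and $\san+\tau_n\le \dd\,\san$ for some $\dd\ge 1$ (using $B_n\gg A e^{-\san/2}$), so the bound of Proposition \ref{propunif} still applies. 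Then $|v_{b,1,0}(\san+\tau_n)|\sim\delta_0$, which contradicts \eqref{uij} since $\san+\tau_n\to\infty$. Hence $a_{n,1}=O(a_{n,2}^{3/2})$.

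The main obstacle is the same one encountered throughout this section: converting the formal growth argument $v_{b,1,0}(\san+\tau)\sim e^{\tau/2}v_{b,1,0}(\san)$ into a rigorous estimate on a time interval of length comparable to $\san$, where the local change of variables \eqref{wbw-} no longer suffices. The resolution, exactly as in Part 1, is to use Proposition \ref{propunif} (with its $L^4_\rho$ upgrade) to bound the quadratic integral in \eqref{eqvb10-} by $C A^4 e^{-2\san}$, which is smaller than $e^{\tau/2}B_n^3 e^{-\san}$ on the chosen interval thanks to $B_n\gg Ae^{-\san/2}$; all other steps are routine verifications of the exponent inequalities.
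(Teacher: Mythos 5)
Your proof follows essentially the same line of argument as the paper: use the cancelations $C_{4,1}=C_{4,2}=C_{5,4}=0$ to reduce the expansion of $v_{b,1,0}$ to the $C_{4,0}$ and $C_{5,3}$ contributions plus the error, compare those two contributions via the ratio $a_{n,1}^2/a_{n,2}^3$, use the contradiction hypothesis to extract the dominance of the $C_{4,0}$ term, and then exploit the growth factor $e^{\tau/2}$ together with Proposition \ref{propunif} (and its $L^4_\rho$ upgrade) and the uniform convergence \eqref{uij} to obtain the contradiction. This matches the paper's proof step by step, including the choice of $\tau_n$ and the verification that $\san+\tau_n\le D\san$.

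One small slip worth noting: you justify negligibility of the $O(\san e^{-2\san})$ error term by claiming $B_n^3 e^{-\san}\gg e^{-2\san}$ follows from \eqref{maher}, i.e., from $B_n\gg Ae^{-\san/2}$. But \eqref{maher} only yields $B_n^3 e^{-\san}\gg A^3 e^{-5\san/2}$, which does \emph{not} dominate $\san e^{-2\san}$. The correct source is your own contradiction hypothesis $a_{n,1}\gg a_{n,2}^{3/2}$, which gives $B_n\gg A^{1/2}e^{-\san/4}$ and hence $B_n^3 e^{-\san}\gg A^{3/2}e^{-7\san/4}\gg\san e^{-2\san}$. With that citation corrected, the argument is sound and coincides with the paper's.
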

%%%%%%%%%%%%%%%%%%%%%%%%%%%%%%%%%%%%
%%%%%%%%%%%%%%%%%%%%%%%%%%%%%%%%%%%%
\begin{proof}
% \textbf{Step \SQuatreBis: Proof of the fact that $\aa = O(\bb^{3/2})$}

  Since $C_{5,4}=0$ from
  % the previous step,
Lemma \ref{cl4},
  we go back again to the
table right before
%\eqref{exp-vb10}
\eqref{v1b0t-}
to derive the following expansion
\begin{equation}\label{foued}%\label{vb10s}
  v_{b,1,0}(\san+\tau)=e^{\frac \tau 2} \left\{
4A^3 C_{4,0}B_n^3e^{-\san}+
    2A^4 B_n  e^{-\frac 32 \san}(C_{5,3}+O(B_n))+O\left(\san
      e^{-2\san}\right)\right\}
 \end{equation}
as $n\to \infty$. Making the ratio between the coefficients of the first two terms, we get
\begin{equation}\label{asma}
\frac{A^3 B_n^3 e^{-\san}}{A^4 B_n e^{-\frac 32
    \san}}=\frac{B_n^2}{Ae^{-\frac \san 2}}=\frac {\aa^2}{\bb^3},
\end{equation}
by definitions \eqref{defbn-} and \eqref{an2} of $B_n$ and
$\san$. Assuming by contradiction that
Lemma \ref{cl5}
%the estimate in the title of this step
doesn't hold for some subsequence still denoted the same, we see that
$B_n \gg \sqrt A e^{-\frac \san 4}$, hence,
\[
 v_{b,1,0}(\san+\tau) \sim 4e^{\frac \tau 2} A^3
 C_{4,0}B_n^3e^{-\san}\mbox{ as }n\to \infty,
\]
% \textbf{\`A v\'erifier !!!! J'arrive pas \`a manger le $O$ .... Ouf \c
%   ca marche, mais il faut d\'etailler un poil.. }\\
and the growth
factor shows that $C_{4,0}=0$, which is a contradiction from
\eqref{c40<0-}. The justification of this step follows as in
Part 1.
%of the current subsection.
%Step 1 of Part \PDeux.
This concludes the proof of Lemma \ref{cl5}.
\end{proof}

\bigskip

\textbf{Part  3:
  % (old Step \SQuatreTer):
  Possible limits for $\frac {\aa}{\bb^{\frac 32}}$}

From Lemma \ref{cl5},
%Step \SQuatreBis,
we may extract a subsequence (still denoted the same) such
that
\begin{equation}\label{deflim}
\frac {\aa}{\bb^{\frac 32}}\to L_0\mbox{ as }n\to \infty,
\end{equation}
for some $L_0\ge 0$. Let us assume that $L_0>0$ (note that the case
$L_0=0$ will be
% further investigated (in fact
ruled out in Part 4). Using \eqref{asma}, we see that
%Step \SQuatreQuater and following.\\
%If $L_0> 0$, we see from \eqref{asma} that
\[
B_n \sim L_0\sqrt A e^{-\frac \san 4}\mbox{ as }n\to \infty.
\]
From \eqref{foued}, we see that
\[
v_{b,1,0}(\san+\tau)\sim e^{\frac \tau 2} A^4B_n e^{-\frac 32\san}\left(
4C_{4,0}L_0^2+
    2C_{5,3}+O(\san e^{-\frac \san 2})\right)\mbox{ as }n\to \infty.
\]
As before, we see from the growth factor that
\[
4C_{4,0}L_0^2+
    2C_{5,3}=0.
\]
Since $C_{4,0}<0$ from \eqref{c40<0-} and $L_0>0$, we necessarily see
that $C_{5,3}>0$ and
\begin{equation}\label{L0}
L_0= \sqrt{-\frac{C_{5,3}}{2 C_{4,0}}}.
\end{equation}
The justification of this part can be done exactly as
Part 1.
%in Step 1 in Part 1.
%(\textbf{\`a v\'erifier sur le brouillon, quand-m\^eme !!!!}).

\bigskip

\textbf{Part 4: Ruling out the case  where $\aa=
  o(\bb^{\frac 32})$ as $n\to \infty$}

% \textbf{Step \SQuatreQuater:
% Preparing the proof to rule out
% the case where $\aa=
%   o(\bb^{\frac 32})$ as $n\to \infty$}

Let us assume by contradiction 
that the limit defined in \eqref{deflim} is
zero. In other words,
\begin{equation}\label{L00}
L_0=0.
\end{equation}
From \eqref{maher} and \eqref{asma}, we see that 
\begin{equation}\label{imen}
Ae^{-\frac \san 2} \ll B_n \ll A^{\frac 12}e^{-\frac\san 4}\mbox{ as } n\to \infty.
\end{equation}
At the end of this part, 
%In Step \SOnze $ $ below,
we will reach a contradiction, ruling this case out.
In fact, our argument will follow from 3 steps:\\
%stated in different claims. 
- In Step 1, we show that  $a_{n,1}=O\left(a_{n,2}^2|\log
  a_{n,2}|\right)$ as $n\to \infty$.\\
- In Step 2, we prove that $C_{6,6}=C_{5,3} = C_{6,5}=0$.\\
- In Step 3, we reach a contradiction and finish the argument of Part 4.

\bigskip

%This is the first claim:
%%%%%%%%%%%%%%%%%%%%%%%%%%%%%%%
%%%%%%%%%%%%%%%%%%%%%%%%%%%%%%%
%\begin{cl} \label{clmoncef} It holds that  $a_{n,1}=O\left(a_{n,2}^2|\log
%    a_{n,2}|\right)$ as $n\to \infty$.
%\end{cl}
\textbf{Step
1:
  %\SCinq:
  Proof of the fact that  $a_{n,1}=O\left(a_{n,2}^2|\log
     a_{n,2}|\right)$ as $n\to \infty$}
%%%%%%%%%%%%%%%%%%%%%%%%%%%%%%% 
%%%%%%%%%%%%%%%%%%%%%%%%%%%%%%%
 % \begin{proof}
 
Let us first show how the ratio between $a_{n,1}$ and $a_{n,2}^2|\log
a_{n,2}|$ naturally arises in our argument.
Using \eqref{foued} together with \eqref{imen}, we write:
% Focusing on $v_{b,1,0}(s)$,
% the same technique yields (use again the table right before
% \eqref{v1b0t-}
% together with \eqref{imen}):\\
\begin{equation}\label{vb10s}
  v_{b,1,0}(\san+\tau)=e^{\frac \tau 2} \left\{
%4A^3 C_{4,0}B_n^3e^{-\san}+
    2A^4 B_n  e^{-\frac 32 \san}(C_{5,3}+O(B_n))+O\left(\san
      e^{-2\san}\right)\right\}
  \mbox{ as } n\to \infty.
\end{equation}
%\textbf{Attention !!!! J'avais oubli\'e le terme avec $C_{4,0}$ dans
  %mon raisonnement....}\\
Clearly, the balance between the $C_{5,3}$ term and the error term is important. Considering the ratio between the two, we write
\begin{equation}\label{ratio}
\frac{A^4B_n e^{-\frac 32 \san}}{\san e^{-2 \san}}=A^5\frac{B_n}{\san Ae^{-\frac \san 2}}\sim \frac{A^5}2 \frac{a_{n,1}}{a_{n,2}^2|\log a_{n,2}|}\mbox{ as }n\to \infty
\end{equation}
thanks to the definitions \eqref{defbn-} and \eqref{an2} of $B_n$ and
$\san$. Thus, the ratio in the title of Step \SCinq $ $
appears.

\medskip

% Proceeding by contradiction, we assume that (for a subsequence denoted the same), we have
% \begin{equation}\label{contra10}
% \frac{A^4B_n e^{-\frac 32 \san}}{\san e^{-2 \san}}\to \infty.
% \end{equation}
% In that case, the $C_{4,0}$ term in \eqref{vb10s} is indeed larger than the error term (recall that $C_{4,0}\neq 0$ by \eqref{c40<0-}), hence
% \[
% v_{b,1,0}(\san+\tau)\sim 4 e^{\frac \tau 2} A^3 B_n^3 C_{4,0} e^{-\san},
% \]
% and the growth facor $e^{\frac \tau 2}$ leads to a contradiction. Thus, \eqref{contra10} doesn't hold. Using the definitions \eqref{defbn-} and \eqref{an2} of $B_n$ and $\san$, we deduce the estimate in the title of this step. 
Proceeding by contradiction, we assume that (for a subsequence denoted the same), we have
\begin{equation}\label{contra0}
a_{n,1}\gg a_{n,2}^2|\log a_{n,2}|\mbox{ as }n\to \infty.
\end{equation}
Using \eqref{ratio} and
\eqref{imen},
%\eqref{bnsize1},
we see that
\begin{equation}\label{bnsize2}
\san e^{-\frac \san 2}\ll B_n \ll e^{-\frac \san 4}\mbox{ as }n\to \infty.
\end{equation}
Using \eqref{vb10s}, we see that
\[
 v_{b,1,0}(\san+\tau) \sim    2 e^{\frac \tau 2} A^4 B_n  e^{-\frac 32 \san}(C_{5,3}+o(1))
  \mbox{ as } n\to \infty,
\]
and the growth factor implies that
% In the following, we will go through some ``micro-steps'' each stated
% in a claim, in order to reach a contradiction.
%
% \bigskip
%
% This is the first claim:
% %%%%%%%%%%%%%%%%%%%%%%%%%%%%%%%%%%
% %%%%%%%%%%%%%%%%%%%%%%%%%%%%%%%%%%
% \begin{cl}\label{clc53} It holds that $C_{5,3}=0$.
% \end{cl}
% %%%%%%%%%%%%%%%%%%%%%%%%%%%%%%%%%%
% %%%%%%%%%%%%%%%%%%%%%%%%%%%%%%%%%%
% \begin{proof}
% From \eqref{vb10s}, we see that in order to avoid the growth of $v_{b,1,0}(s)$, we need to have 
\begin{equation}\label{clc53} 
C_{5,3}=0.
\end{equation}
The rigorous justification is exactly the same as in earlier steps,
relying on Proposition \ref{propunif}.
%\end{proof}

\bigskip

% This is the following claim:
% %%%%%%%%%%%%%%%%%%%%%%%%%%%%%%%%%%
% %%%%%%%%%%%%%%%%%%%%%%%%%%%%%%%%%%
% \begin{cl} coucouc coucou
% \end{cl}
% \noindent - \textit{Step \SCinqB: Refined Taylor expansion of $v_0(y,s)$ up to
%   $O\left(se^{-\frac 52s}\right)$}.
% %%%%%%%%%%%%%%%%%%%%%%%%%%%%%%%%%%
% %%%%%%%%%%%%%%%%%%%%%%%%%%%%%%%%%%

At this stage, we need to further refine the Taylor expansion of
$v_0(y,s)$ given in
\eqref{do},
%\eqref{v0},
taking into account that with respect to the expansion \eqref{vb1--},
we already have
$C_{4,1}=C_{4,2}
%=C_{4,3}=C_{4,4}
=C_{5,3}=C_{5,4}
%=C_{5,5}
=0$, from
Part 1, Lemma \ref{cl4} and \eqref{clc53}.
% Steps 1 and 4 in Part 1 together with Steps 1, 4 and \SCinqA $ $ in
% Part \PDeux.
Using the strategy described in Proposition \ref{propexp}, we write:
\begin{align}
 v_0(y,s)=&-\frac {e^{-2s}}3C_{4,0}^2 \gamma_{4,4,0}h_0h_0
-\frac {e^{-2s}}2C_{4,0}^2 \gamma_{4,4,2}h_2h_0
+\left[C_{4,0}e^{-s}- e^{-2s}C_{4,0}^2 \gamma_{4,4,4}\right]h_4h_0\nonumber\\
&+e^{-\frac 32s}\sum_{j=0}^2C_{5,j}h_{5-j}h_j
+se^{-2s}C_{4,0}^2 \gamma_{4,4,6}h_6h_0
+e^{-2s}\sum_{j=0}^6C_{6,j}h_{6-j}h_j\nonumber\\
&+e^{-2s}C_{4,0}^2h_8h_0+O\left(se^{-\frac 52s}\right),\label{taylor2-}
\end{align}
for some real coefficients $C_{6,j}$ with $j=0,\dots,6$, 
where we have used the following notation
\[
\gamma_{l,m,n}= \int_{\m R} h_l(\xi)h_m(\xi) k_n(\xi)\rho(\xi) d\xi,
\]
with the polynomials defined in \eqref{defhj} and \eqref{defk}, the weight $\rho$ in \eqref{defro} and the convention
\[
\gamma_{l,m,n}=0\mbox{ if }l,\;m\mbox{ or }n\mbox{ is negative }, 
  \]
together with the notation
\[
h_lh_m\mbox{ which stands for }h_l(y_1) h_m(y_2).
\]
Following this new refined expansion,
we give first the following cancelation:
% we claim that the conclusion of
% Step 1
% %the proof of Claim \ref{clmoncef}
% follows from the following:
%%%%%%%%%%%%%%%%%%%%%%
%%%%%%%%%%%%%%%%%%%%%%
\begin{cl}\label{clazzabi} It holds that $C_{6,6}=C_{6,5}=0$.
\end{cl}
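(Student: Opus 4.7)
The plan is to follow closely the strategy used in the previous parts of Section~\ref{sec0}: assume by contradiction that one of $C_{6,6}$ or $C_{6,5}$ is nonzero; translate via the similarity-variables identity \eqref{wbw000} with $b=a_n$; isolate the leading contribution of that coefficient along a suitable expanding mode of the operator $\q L$; integrate the corresponding ODE \eqref{eqvij} forward in $\tau=s-\san$ using Proposition~\ref{propunif} to dominate the quadratic source; and finally pick $\tau_n$ so that the resulting quantity saturates at some fixed $\delta_0>0$, checking that $\san+\tau_n\le\dd\san$ for some $\dd\ge 1$ so that Proposition~\ref{propunif} indeed applies, thereby contradicting the uniform decay \eqref{uij}.

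First I would prove $C_{6,6}=0$. Using \eqref{taylor2-} together with the binomial identity \eqref{binomial} and the orthogonality relation \eqref{ortho}, project $v_b(y_b,\san)=v_0(y_{b,1}+AB_n,\,y_{b,2}+A,\san)$ onto the eigenmode $h_0(y_{b,1})h_1(y_{b,2})$ of eigenvalue $\tfrac12$: the term $C_{6,6}\,h_0(y_1)h_6(y_2)$ contributes exactly $6\,C_{6,6}A^5 e^{-2\san}$. Under the tight window \eqref{bnsize2}, namely $\san e^{-\san/2}\ll B_n\ll e^{-\san/4}$, every other contribution---the remaining $C_{6,j}$ with $j\le 5$, the $C_{5,j}$ with $j=0,1,2$, the $C_{4,0}$ term, the quadratic corrections in \eqref{taylor2-}, and the $L^q_\rho$ remainder controlled by the Cauchy--Schwartz trick of \eqref{vb00}---is $o(A^5 e^{-2\san})$ as $n\to\infty$, provided $A$ is taken large. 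Integrating the ODE satisfied by $v_{b,0,1}(s)$ exactly as in \eqref{intvb10}, and applying Proposition~\ref{propunif} (together with its $L^4_\rho$ version described in the remark following it) to bound the quadratic source, one obtains
\begin{equation*}
v_{b,0,1}(\san+\tau)\sim 6\,C_{6,6}A^5 e^{-2\san}\,e^{\tau/2}\quad\text{as }n\to\infty,
\end{equation*}
so that choosing $\tau_n$ with $6|C_{6,6}|A^5 e^{-2\san}e^{\tau_n/2}=\delta_0$ gives $\tau_n=4\san+O(1)$; then $\san+\tau_n\in[\san,5\san]$ for $n$ large and $\san+\tau_n\to\infty$, and the resulting value $\sim\delta_0$ contradicts \eqref{uij} applied to the blow-up point $a_n\in\q S$.

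Once $C_{6,6}=0$, the same $v_{b,0,1}$ projection now yields the leading behavior $v_{b,0,1}(\san)\sim 5\,C_{6,5}A^5 B_n e^{-2\san}$, and repeating the integration-and-contradiction scheme (with $\tau_n$ still of order $\san$ thanks to the lower bound $B_n\gg\san e^{-\san/2}$) forces $C_{6,5}=0$. The main delicate point lies in this second step: the naive $L^4_\rho$ bound on $v_b$ would make the quadratic source $O(A^4 e^{-2\san})$, which exceeds the new leading term by a factor $1/(AB_n)\to\infty$. One must exploit a parity cancelation---the mode $k_1(y_{b,2})$ is odd in $y_{b,2}$, whereas the leading part $C_{4,0}^2 e^{-2s}h_4(y_{b,1}+AB_n)^2$ of $v_b^2$ is independent of $y_{b,2}$---so that the effective source is in fact $O(e^{-5\san/2})$, which is smaller than $A^5 B_n e^{-2\san}$ under \eqref{bnsize2}. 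With this refinement the argument goes through and the claim follows.
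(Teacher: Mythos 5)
Your overall strategy matches the paper's, with one cosmetic variation and one genuine gap.

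The cosmetic variation: for $C_{6,6}=0$ you project on $v_{b,0,1}$ (getting a leading term $6C_{6,6}A^5e^{-2\san}$), whereas the paper projects on $v_{b,0,0}$ (getting $C_{6,6}A^6e^{-2\san}$). Both work: the quadratic source estimated via Proposition \ref{propunif} is $O(A^4e^{-2\san})$, and taking $A$ large makes $A^5$ (resp.\ $A^6$) dominate $A^4$. Your computation of the coefficient $6A^5$, the choice $\tau_n=4\san+O(1)$, and the check $\san+\tau_n\in[\san,\dd\san]$ are all correct.

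The gap is in the $C_{6,5}=0$ step. You correctly identify the obstruction ($A^4e^{-2\san}$ from Proposition \ref{propunif} swamps the $A^5B_ne^{-2\san}$ leading term) and the remedy (parity: $k_1(y_{b,2})$ is odd while the dominant $[C_{4,0}e^{-s}h_4]^2$ part of $v_b^2$ is independent of $y_{b,2}$). But you then assert that ``the effective source is in fact $O(e^{-5\san/2})$'' without explaining how to get this. Writing $v_b=C_{4,0}e^{-s}h_4(y_{b,1})+r$, the cross term in $\int v_b^2 k_1(y_2)\rho$ is $O(e^{-s}\|r\|_{L^2_\rho})$, and Proposition \ref{propunif} only gives $\|r\|_{L^2_\rho}\lesssim A^2e^{-\san}$ --- so the cross term is a priori $O(A^2e^{-2\san})$, still too big by a factor $1/(A^3B_n)\to\infty$. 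What is needed, and what you do not provide, is a refined bound $\|v_{a_n}(s)-C_{4,0}e^{-s}h_4(y_1)\|_{L^4_\rho}\lesssim e^{-\frac32\san}$ that holds \emph{uniformly for all} $s\in[\san,\dd\san]$ (not just at $s=\san$). This cannot be read off the Taylor expansion \eqref{taylor2-} of $v_0$ because the change of variables \eqref{wbw-} produces uncontrolled $e^{2\tau}$-type terms when $\tau$ is unbounded; it requires a separate mode-by-mode bootstrap (initialize from \eqref{taylor2-} at $s=\san$, then integrate the projected ODEs and the $P_{\{\lambda\le-3/2\}}$ differential inequality over $[\san,\dd\san]$). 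The paper does exactly this in a dedicated Proposition \ref{propexp4}, proved in Subsection \ref{secunif}. Your phrase ``with this refinement the argument goes through'' elides the most delicate part of the proof, which is precisely to establish that refinement.
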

%%%%%%%%%%%%%%%%%%%%%% 
%%%%%%%%%%%%%%%%%%%%%%
% Indeed, let us first use this claim to finish the proof of Claim
% \ref{clmoncef}, then, we will give the proof of Claim \ref{clazzabi}.
%
% \medskip
%
% ......
\begin{proof}
Using the expansion \eqref{taylor2-}, we give the following table, which
is an update of the former table given right  before  \eqref{exp-bvb10}:

% Here, the table given right before  \eqref{exp-bvb10} needs to be
% updated to take into account the new terms of order $se^{-2s}$ and
% $e^{-2s}$ :

\bigskip

\hspace{-3cm}
\begin{tabular}{|c|c|c|c|}
\hline
&$v_{b,0,0}(\san+\tau)$  &$v_{b,1,0}(\san+\tau)$
  &$v_{b,0,1}(\san+\tau)$\\
  \hline
  &&&\\
  $C_{4,0}^2 \gamma_{4,4,0}$&
 $-\frac{C_{4,0}^2}3 \gamma_{4,4,0}e^{-2\tau}e^{-2\san}$
                         &0&0\\
  \hline
  &&&\\
  $C_{4,0}^2 \gamma_{4,4,2}$
  & $-\frac{C_{4,0}^2}2\gamma_{4,4,2} e^{-\tau}A^2B_n^2e^{-2\san}$
    & $-C_{4,0}^2\gamma_{4,4,2} e^{-\frac 32\tau}AB_ne^{-2\san}$
   &0\\
\hline
&&&\\
$C_{4,0}$&$C_{4,0}e^\tau A^4 B_n^4e^{-\san}$&$4C_{4,0}e^{\frac \tau 2}
                                              A^3 B_n^3e^{-\san}$&0\\
  \hline
  &&&\\
  $C_{4,0}^2 \gamma_{4,4,4}$&
 $-C_{4,0}^2 \gamma_{4,4,4}  A^4 B_n^4e^{-2\san}$&$-4C_{4,0}^2 \gamma_{4,4,4} e^{-\frac \tau 2}A^3 B_n^3e^{-2\san}$&0\\
% &&&\\
% $C_{4,1}$&$C_{4,1}e^\tau A^4 B_n^3e^{-\san}$&$3C_{4,1}e^{\frac \tau 2} A^3 B_n^2e^{-\san}$&$C_{4,1}e^{\frac \tau 2} A^3 B_n^3e^{-\san}$\\
% &&&\\
% $C_{4,2}$&$C_{4,2}e^\tau A^4 B_n^2e^{-\san}$&$2C_{4,2}e^{\frac \tau 2} A^3 B_ne^{-\san}$&$2C_{4,2}e^{\frac \tau 2} A^3 B_n^2e^{-\san}$\\
\hline
% &&&\\
% $C_{5,0}$&$C_{5,0}e^\tau A^5 B_n^5e^{-\frac 32 \san}$&$5C_{5,0}e^{\frac \tau 2} A^4 B_n^4e^{-\san}$&0\\
&&&\\
$C_{5,i}$&$C_{5,i}e^\tau A^5 B_n^{5-i}e^{-\frac 32
           \san}$&$(5-i)C_{5,i}e^{\frac \tau 2} A^4 B_n^{4-i}e^{-\frac
                   32 \san}$&$iC_{5,i}e^{\frac \tau 2} A^4
                              B_n^{5-i}e^{-\frac 32\san}$\\
% &&&\\
% $C_{5,1}$&$C_{5,1}e^\tau A^5 B_n^4e^{-\frac 32 \san}$&$4C_{5,1}e^{\frac \tau 2} A^4 B_n^3e^{-\san}$&$C_{5,1}e^{\frac \tau 2} A^4 B_n^4e^{-\san}$\\
% &&&\\
% $C_{5,2}$&$C_{5,2}e^\tau A^5 B_n^3e^{-\frac 32 \san}$&$3C_{5,2}e^{\frac \tau 2} A^4 B_n^2e^{-\san}$&$2C_{5,2}e^{\frac \tau 2} A^4 B_n^3e^{-\san}$\\
% &&&\\
% $C_{5,3}$&$C_{5,3}e^\tau A^5 B_n^2e^{-\frac 32 \san}$&$2C_{5,3}e^{\frac \tau 2} A^4 B_ne^{-\san}$&$3C_{5,3}e^{\frac \tau 2} A^4 B_n^2e^{-\san}$\\
% &&&\\
% $C_{5,4}$&$C_{5,4}e^\tau A^5 B_ne^{-\frac 32 \san}$&$C_{5,4}e^{\frac \tau 2} A^4 e^{-\san}$&$4C_{5,4}e^{\frac \tau 2} A^4B_n e^{-\san}$\\
\hline
&&&\\
$C_{4,0}^2\gamma_{4,4,6}$& $C_{4,0}^2\gamma_{4,4,6}e^\tau A^6 B_n^6 (\san+\tau)e^{-2\san}$ &$6C_{4,0}^2\gamma_{4,4,6}e^{\frac\tau 2}A^5 B_n^5 (\san+\tau)e^{-2\san}$ &0\\
\hline
% &&&\\
% $C_{6,0}$& $C_{6,0}e^\tau A^6 B_n^6 e^{-2\san}$ &$6C_{6,0}e^{\frac\tau 2} A^5 B_n^5 e^{-2\san}$ &0\\
&&&\\
$C_{6,i}$& $C_{6,i}e^\tau A^6 B_n^{6-i} e^{-2\san}$ &$(6-i)C_{6,i}e^{\frac \tau 2} A^5 B_n^{5-i} e^{-2\san}$ &$iC_{6,i}e^{\frac \tau 2} A^5 B_n^{6-i} e^{-2\san}$\\
\hline
&&&\\
$C_{4,0}^2$&$C_{4,0}^2e^{2\tau} A^8 B_n^8e^{-2\san}$ &$8C_{4,0}^2e^{\frac 32\tau} A^7 B_n^7e^{-2\san}$ &0\\
\hline
&&&\\
$O(se^{-\frac 52s})$&$e^\tau O((\san+\tau)e^{-\frac 52\san})$
&$e^{\frac \tau 2} O((\san+\tau)e^{-\frac 52\san})$
&$e^{\frac \tau 2} O((\san+\tau)e^{-\frac 52\san})$\\
% &&&\\
% $\bar v_b(y_b,\san+\tau)$ &$\bar v_{b,0,0}(\san+\tau)$&$\bar v_{b,0,1}(\san)$  &$\bar v_{b,1,0}(\san+\tau)$\\ 
\hline
\end{tabular}
%\end{table}
%\end{center}

% \bigskip

% \textbf{Il faudra g\'erer les termes g\'en\'er\'es par $e^{2\tau}$ et $e^{\frac 32 \tau}$.... Par exemple dans la Step 5.c, si $e^\tau A^4B_n^4e^{-\san}C_{4,0}\sim \delta_0$ alors la contribution de $e^{2\tau}$ sera de $\delta_0^2$, et elle ne g\^enera pas.}

\bigskip

Please keep in mind that this table is fully justified if $\tau$ is
zero or bounded. % \textbf{PAS VRAI DU TOUT !!!! En fait, les
%   termes squi viennent avec une exponentielle plus petite
%   contribueront aussi \`a $\tau=0$ ou born\'e.....}
It happens that for our argument, we will need
$\tau=\tau_n$ to go to infinity as $n\to \infty$. In that case,
%please
note that the terms bearing the coefficients $e^{2\tau}$ and
$e^{3\tau/2}$ are ``artificial'' and should not be taken into account
in our argument, including the formal one. This way, we will ignore
those terms in our formal argument. Fortunately, when it comes to the
rigorous argument, this table will be useful for $\tau=0$ or
bounded, 
and the conclusion will follow from the integration of the
PDE for $\tau$ ranging between $0$ and suitably large values.

\bigskip

%%%%%%%%%%%%%%%%%
% Ceci est la nouvelle Step 5.c
%%%%%%%%%%%%%%%%%%%
\noindent-
% \textit{Step \SCinqC:
\textit{Proof of the fact that $C_{6,6}=0$}.\\
Using \eqref{bnsize2} and the previous table, we may derive the following
estimate for large $\tau$:
\begin{equation}\label{formal}
v_{b,0,0}(\san +\tau) = e^\tau(A^6C_{6,6}+o(1))e^{-2\san} \mbox{ as
}n\to \infty,
\end{equation}
and the growth factor implies that
\begin{equation}\label{c66}
  C_{6,6}=0.
\end{equation}
As we have just written after the previous table, we don't take into
account the term with $e^{2\tau}$ is the formal derivation of
\eqref{formal}.
Now, as for the rigorous justification, it goes as in previous steps,
relying on the previous table only for bounded $\tau$, together with
Proposition \ref{propunif}. However, the rigorous proof is a little
more delicate than earlier,
%since \eqref{formal} is not true for $\tau=0$, and
since we need to take $A$ large enough in
order to conclude, as we explain in the following:\\
- first, relying on the previous table and \eqref{bnsize2}, we write the following
expansion for $\tau=0$ (note that it is different from \eqref{formal},
since we have lower order exponential terms in $\tau$ in the table
which cannot be neglected for small $\tau$):
\begin{equation}\label{initvb00}
  v_{b,0,0}(\san) =\left[-\frac{C_{4,0}^2}3
  \gamma_{4,4,0}+C_{6,6}A^6+o(1)\right]e^{-2\san}
  \mbox{ as }n\to \infty.
\end{equation}
- second, using Proposition \ref{propunif} (with $\dd =4$) and proceeding as for
\eqref{intvb10}, we integrate the ODE \eqref{eqvij} satisfied by
$v_{b,0,0}$ and write: 
\begin{equation}\label{intvb00}
\forall s\in [\san, \dd \san],
  %\forall s\ge \san,
  \;\;|v_{b,0,0}(s)-e^\tau v_{b,0,0}(\san)|\le Ce^\tau A^4 e^{-2\san},\mbox{ where }\tau=s-\san.
\end{equation}
Using \eqref{initvb00}, and taking $A$ large enough so that $A^6$ term in
\eqref{initvb00} dominates its neighbor with $C_{4,0}^2$, and also the
$A^4$ term in the right-hand side of
\eqref{intvb00}, we see that \eqref{formal} is justified.

% ..............

% it goes as in previous steps,
% relying on the previous table only for bounded $\tau$, together with
% Proposition \ref{propunif} (note that the powers of $A$ (6 in
% % FAUX A MON AVIS \eqref{vb00}, a remplacer par le suivant
% \eqref{formal}
% and 2 in Proposition \ref{propunif}) are important, and one has to
% take $A$ large enough in order to finish the proof). 

\bigskip

\noindent -
%\textit{Step 
%%5.d
%\SCinqE:
\textit{Proof of the fact that $C_{6,5}=0$}.\\
% if
%   % $L_0=0$ and
% \eqref{contra0} hold}.
Proceeding as before, we may write the following expansion:
\begin{align}
v_{b,0,1}(\san +\tau) = e^{\frac \tau 2}&\left\{2A^4 B_n^3 e^{-\frac 32 \san}\left(C_{5,2}+O(B_n)\right)\right.\label{doremi}\\
&\left.+5A^5 B_n e^{-2\san}(C_{6,5}+O(B_n))+O\left(\san e^{-\frac 52 \san}\right)\right\}.\nonumber
\end{align}
%Recall from \eqref{imen} that $B_n =o(e^{-\frac \san 4})$. 
%Therefore,
Assuming by contradiction that $C_{6,5}\neq 0$, we see
from \eqref{bnsize2}
%and \eqref{doremi}
that
\begin{equation}\label{redosila}
v_{b,0,1}(\san +\tau) \sim 5e^{\frac \tau 2}A^5 B_n e^{-2\san}C_{6,5},
\end{equation}
and the growth factor leads to a contradiction as usual. Thus, $C_{6,5}=0$.
%
% \medskip
%
% \textbf{Est-ce que le terme en $e^{\frac 32 \tau}$ ne donne pas une contradiction plus rapide, avec $C_{4,0}^2=0$ pour un $\tau$ plus petit ?}
As for the rigorous justification, it is a little more complicated
than
usual.
%the justification we gave in Step 1 above.
Indeed, when $\tau =0$, we have $|v_{b,0,1}(\san)| \ll e^{-2\san}$, less than $ e^{-2\san}$, which is the bound Proposition \ref{propunif} gives on the quadratic term in the equation \eqref{eqvij} satisfied by $v_{b,0,1}$. In other words, Proposition \ref{propunif} is not enough to show that the linear term dominates the quadratic term when we integrate equation \eqref{eqvij}. Thus, we need to refine Proposition \ref{propunif} in the following:
%%%%%%%%%%%%%%%%%%%%%%%%%%%%%%%%%%%%%%%%%%%%%%
%%%%%%%%%%%%%%%%%%%%%%%%%%%%%%%%%%%%%%%%%%%%%%
\begin{prop}[Uniform expansion for $v_{a_n}$]\label{propexp4}
 Following Proposition \ref{propunif} and assuming that
  \begin{equation}\label{rough}
    v_0(y,s)= C_{4,0} e^{-s}h_4h_0+O(e^{-\frac 32s})\mbox{ as }s\to \infty
  \end{equation}
  (in $L^q_\rho$ for any $q\ge 2$), we claim 
   that for all $A\ge 1$ and $\dd \ge 1$,  for $n$ large enough, we
   have for all
$s\in[\san, \dd \san]$,
   %$s\ge \san$,
\[
  \|v_{a_n}(y,s)-C_{4,0}e^{-s}h_4(y_1)\|_{L^2_\rho}\le
C(A)e^{-\frac 32 \san}.
  %\min(CA^2 e^{-\san},C(s-\san)A^4e^{-2\san}+C(A)e^{-\frac 32 \san}).
\]
\end{prop}
%%%%%%%%%%%%%%%%%%%%%%%%%%%%%%%%%%%%%%%%%%%%%%
%%%%%%%%%%%%%%%%%%%%%%%%%%%%%%%%%%%%%%%%%%%%%%
\begin{nb}As we wrote right after Proposition \ref{propunif}, here
  also, we have the same estimate valid in $L^4_\rho$, by the same
  parabolic regularity argument.
  \end{nb}
\begin{proof}
See Subsection \ref{secunif}.
\end{proof}
Let us use this proposition to justify \eqref{redosila} (note from
\eqref{taylor2-} that \eqref{rough} holds here).\\
Note first from \eqref{bnsize2} that the time $\tau_n$ which allows
\eqref{redosila} to be equal to some $\delta_0>$ satisfies $\tau_n \in [0,
\bar \dd \san]$, for some $\bar \dd >0$. Let us then apply Proposition
\ref{propexp4} with $\dd = \bar \dd +1$.\\
First, when $s=\san$, we may use \eqref{doremi} (which is fully
justified when $\tau=0$) and \eqref{bnsize2} to write (remember that
 $b=a_n$ from \eqref{defb1-}):
\begin{equation}\label{ini4}
v_{b,0,1}(\san)=5A^5 B_n e^{-2\san}C_{6,5}+O(B_n^3 e^{-\frac 32 \san})+O(\san e^{-\frac 52 \san}).
\end{equation}
% where $b=a_n$, for simplicity in the notation, as we were doing right
% before Proposition \ref{propexp4}.\\
Now, we recall the ODE \eqref{eqvij} satisfied by $v_{b,0,1}$:
%(\textbf{au fait, on \'ecrit $b$ ou $a_n$ ?}): 
\[
\forall s\ge \san,\;\;v_{b,0,1}'(s)=\frac 12 v_{b,0,1}(s)+\int v_b(y,s)^2 k_1(y_2)\rho(y)dy.
\]
Using Proposition \ref{propexp4} with the $L^4_\rho$ norm (use the
remark following the proposition), we may estimate the quadratic term as follows
 (note that the contribution of the term $[C_{4,0}e^{-s}h_4(y_1)]^2$ vanishes thanks to the orthogonality between $h_4(y_1)^2$ and $h_1(y_2)$ in $L^2_\rho$), and write the following differential inequality on $v_{b,0,1}(s)$:
\begin{align*}
\forall s\ge \san,\;\;|v_{b,0,1}'(s)-\frac 12 v_{b,0,1}(s)|
  \le & C(s-\san)e^{-s}A^4e^{-2\san}+C(A)e^{-s}e^{-\frac 32 \san}\\
  +&C(s-\san)^2A^8e^{-4\san}+C(A)^2e^{-3\san}.
\end{align*}
%(\textbf{note that we need parabolic regularity here too}).
Integrating this equation, we see that
\begin{equation*}%\label{boudour}
\forall \tau \ge 0,\;\;|v_{b,0,1}(\tau+\san)-e^{\frac \tau 2}v_{b,0,1}(\san)|
\le C(A)e^{\frac \tau 2}e^{-\frac 52 \san},
\end{equation*}
for $n$ large enough.
Using \eqref{ini4} and \eqref{bnsize2}, we see that \eqref{redosila}
is justified. Thus, $C_{6,5}=0$. This concludes the proof of Claim
\ref{clazzabi}.
\end{proof}
Now, we are ready to give the final argument of Step 1.

\bigskip

%\textbf{Step 3: Final argument of Step 1}

 \noindent - \textit{Final argument of Step 1}.
  % the proof of Claim \ref{clmoncef}}.
% \textit{Step 
% %5.e
%   \SCinqF: Final argument of Step \SCinq}.
Focusing on $v_{b,1,0}(s)$, 
we write the following from the table given right before
%\eqref{ennio}:
\eqref{c66}:
\begin{align*}
v_{b,1,0}(\san +\tau)= e^{\frac \tau 2}&\left\{4A^3 B_n^3 C_{4,0}e^{-\san}+3A^4B_n^2e^{-\frac 32 \san}(C_{5,2}+O(B_n))\right.\\
&\left.+2A^5 B_n e^{-2\san}(C_{6,4}+O(B_n))+O\left(\san e^{-\frac 52 \san}\right)\right\}.\nonumber
\end{align*}
Here again, the terms coming with $e^{3\tau/2}$ in the table should
not be taken into account at the formal level, as we have already
explained following that table.
Since $C_{4,0}\neq 0$ from \eqref{c40<0-}, using \eqref{bnsize2}, we see that
\[
v_{b,1,0}(\san +\tau)\sim 4e^{\frac \tau 2}A^3 B_n^3 C_{4,0}e^{-\san}\mbox{ as }n\to \infty,
\]
hence,
%Since $C_{4,0}\neq 0$ from \eqref{c40<0-},
a contradiction follows
from the growth factor $e^{\frac\tau 2}$ (the rigorous justification
comes exactly as with the proof of the fact that $C_{6,5}=0$ in Claim \ref{clazzabi}).
% in Step 
% %5.d
% \SCinqE). Thus, the estimate in the title of Step \SCinq holds.
This concludes the argument of Step 1, proving that $\aa =
O(\bb^2|\log \bb|)$ as $n\to \infty$.
% Claim \ref{clmoncef}.
%\end{proof}

\bigskip

% \textbf{Step \SSix: Taylor expansion of $v_0(y,s)$ up to $O(se^{-\frac 52
%     s})$}

%\medskip

\textbf{Step 2: Proof of the fact that $C_{6,6}=C_{5,3}=C_{6,5}=0$}.

We will prove these 3 cancelations in the order they appear. Before
doing that, let us recall that the Taylor expansion \eqref{vb1--} still
holds. Even better, we already have some cancelations in that
expansion, namely:  $C_{4,2}=C_{4,1}=0$ and $C_{5,4}=0$
(use Part 1 and Lemma \ref{cl4}).
%(use Steps 1 and 4 in Part 1 together with  Steps 1 and 4 in Part \PDeux).
%
%\medskip
%
One more thing we should keep in mind: using \eqref{noquad} and
Step
1
%\SCinq,
%Claim \ref{clmoncef}, 
we know that
\[
\bb^2 \ll \aa \le C \bb^2|\log \bb|\mbox{ as }n\to \infty.
\]
Using the definitions \eqref{defbn-} and \eqref{an2} of $B_n$ and $\san$, we see that
\begin{equation}\label{bnsize3}
Ae^{-\frac \san 2} \ll B_n \le CA\san e^{-\frac \san 2}\mbox{ as }n\to \infty.
\end{equation}
With these facts in mind, we may use the strategy of Proposition
\ref{propexp} and refine the Taylor expansion
\eqref{do}
%\eqref{v0}
written for $v_0(y,s)$ and prove that:
\begin{align}
 v_0(y,s)=&-\frac {e^{-2s}}3C_{4,0}^2 \gamma_{4,4,0}h_0h_0
-\frac {e^{-2s}}2C_{4,0}^2 \gamma_{4,4,2}h_2h_0
+\left[C_{4,0}e^{-s}- e^{-2s}C_{4,0}^2 \gamma_{4,4,4}\right]h_4h_0\nonumber\\
&+e^{-\frac 32s}\sum_{j=0}^3C_{5,j}h_{5-j}h_j
+se^{-2s}C_{4,0}^2 \gamma_{4,4,6}h_6h_0
+e^{-2s}\sum_{j=0}^6C_{6,j}h_{6-j}h_j\nonumber\\
&+e^{-2s}C_{4,0}^2h_8h_0+O\left(se^{-\frac 52s}\right),\label{taylor2*}
\end{align}
for some real coefficients $C_{6,j}$ with $j=0,\dots,6$, 
with the same notations following \eqref{taylor2-}. Let us note that
for the formal arguments based on this expansion, the table given
right before
%\eqref{ennio}
\eqref{c66} 
is valid here.

\bigskip

In the following, we successively prove the 3 cancelations mentioned in the title
of this step.
% Now we claim the following:
% %%%%%%%%%%%%%%%%%%%%%%%
% %%%%%%%%%%%%%%%%%%%%%%%
% \begin{cl} \label{clsihem} It holds that  $C_{6,6}=C_{5,3}=C_{6,5}=0$.
% \end{cl}
% %%%%%%%%%%%%%%%%%%%%%%% 
% %%%%%%%%%%%%%%%%%%%%%%%
% \begin{proof}
%   Let us give the argument for the 3 constants, successively.

  \medskip

  \noindent- \textit{
    %\textbf{Step \SSept:
      Proof of the fact that $C_{6,6}=0$}.\\
Using the same strategy as usual, we justify that
\begin{align*}
v_{b,0,0}(\san +\tau) = e^\tau & \left\{A^4 B_n^4 e^{-\san}C_{4,0}+A^5B_n^2e^{-\frac 32 \san}(C_{5,3}+O(B_n))\right.\\
&+\left.A^6 e^{-2\san}(C_{6,6}+O(B_n))+O\left(\san e^{-\frac 52 \san}\right)\right\}.\nonumber
\end{align*}
As we have written following the table given right before \eqref{c66},
we ignore the terms coming with $e^{2\tau}$ in the derivation of this
expansion.\\
Assuming by contradiction that $C_{6,6}\neq 0$ and using \eqref{bnsize3}, we see that
\[
v_{b,0,0}(\san +\tau)\sim e^\tau A^6e^{-2\san}C_{6,6}\mbox{ as }n\to \infty.
\]
%\textbf{Ici aussi, il faut parler des termes avec $e^{2\tau}$ qui sont bien
%  n\'egligeables de la m\^eme mani\`ere.}\\
Then, the growth factor $e^\tau$ leads to a contradiction (for the
rigorous justification, Proposition \ref{propunif} is enough here,
provided that one takes $A$ large enough; for a similar argument see
the proof of the fact that $C_{6,6}=0$ in Claim \ref{clazzabi} above).
% just follow the argument given right that proposition, and take
% advantage of the difference in the power of $A$, which is $6$ coming
% from the linear term and $4$ is the quadratic term).
Thus, $C_{6,6}=0$. 

\medskip

\noindent -
%\textbf{Step \SHuit:
  \textit{Proof of the fact that $C_{5,3}=0$}.\\
Using the same strategy as before, and taking into account the size of
$B_n$ given in \eqref{bnsize3} and assuming by contradiction that
$C_{5,3}\neq 0$, we see that
  \[
v_{b,1,0}(\san+\tau) \sim 2 e^{\frac \tau 2}A^4 B_n e^{-\frac 32
  \san}C_{5,3}\mbox{ as } n\to \infty, 
\]
and the growth factor  $e^{\frac \tau 2}$ leads to a contradiction, as
usual. Once again, the terms with $e^{3\tau/2}$ in the table given right
before \eqref{c66} are ignored in this expansion. As for the rigorous
argument, Proposition \ref{propunif} is sufficient for the justification. Thus, $C_{5,3}=0$.

\medskip

% \textbf{Step \SNeuf:
\noindent\textit{- Proof of the fact that $C_{6,5}=0$}.\\
By the same strategy, taking into account the size of
$B_n$ given in \eqref{bnsize3} and assuming by contradiction that
$C_{6,5}\neq 0$, we see that
  \[
v_{b,1,0}(\san+\tau) \sim  e^{\frac \tau 2}A^5 e^{-2
  \san}C_{6,5}\mbox{ as } n\to \infty, 
\]
and the growth factor $e^{\frac \tau 2}$ leads to a contradiction, as
before. Note that we ignored the terms involving $e^{3\tau/2}$ in the
table given right before \eqref{c66}. As for the rigorous
justification, it goes as in
the proof of the fact $C_{6,6}=0$ in Claim \ref{clazzabi};
% Step \SSept;
in particular, the power of $A$ is crucial. Thus, $C_{6,5}=0$.
%This concludes the proof of Claim \ref{clsihem}.
% \end{proof}
This concludes the proof of the cancelations in the title of Step 2.

\bigskip

% \textbf{Step \SDix: Taylor expansion of $v_0(y,s)$ up to $O(s^2
% e^{-3s})$}

\textbf{Step 3: Final argument of Part 4}

Recalling the Taylor expansion \eqref{taylor2*} and the cancelations
of Step 2,
% holds, with
% $C_{6,6}=C_{5,3}+C_{6,5}=0$ from Step 2.
%
% ...
%
% that so far, we have proved that $C_{4,j}=0$ for $1\le
% j \le 4$, $C_{5,j}=0$ for $3\le j \le 5$ and $C_{6,6}=C_{6,5}=0$ (use Steps 1 and 4 in Part
% 1 together with Step 1, Claims \ref{cl4}, \ref{clc53} and
% Step 2 of Part 4).
% With this information,
we may
use the strategy of Proposition \ref{propexp} and
get to the next order:
% refine the
% %Taylor
% expansion \eqref{taylor2*}
% as follows:
% %written for $v_0(y,s)$ and prove that:
\begin{align}
  &v_0(y,s)=
             -\frac {e^{-2s}}3C_{4,0}^2 \gamma_{4,4,0}h_0h_0
-\frac{2e^{-\frac 52s}}3C_{4,0}\sum_{j=0}^1C_{5,j}\gamma_{4,5-j,1-j}h_{1-j}h_j \label{taylor3*} \\
&           -\frac {e^{-2s}}2C_{4,0}^2 \gamma_{4,4,2}h_2h_0
-e^{-\frac 52s}C_{4,0}\sum_{j=0}^2C_{5,j}\gamma_{4,5-j,3-j}h_{3-j}h_j \nonumber\\
&+\left[C_{4,0}e^{-s}- e^{-2s}C_{4,0}^2 \gamma_{4,4,4}\right]h_4h_0
+\sum_{j=0}^2[C_{5,j}e^{-\frac 32s} -2e^{-\frac 52s}C_{4,0}C_{5,j}\gamma_{4,5-j,5-j}] h_{5-j}h_j\nonumber\\
&+se^{-2s}C_{4,0}^2 \gamma_{4,4,6}h_6h_0
                                                            +e^{-2s}\sum_{j=0}^4C_{6,j}h_{6-j}h_j
+\sum_{j=0}^2 2se^{-\frac 52s}C_{4,0}C_{5,j}\gamma_{4,5-j,7-j}h_{7-j}h_j\nonumber\\
&  +\sum_{j=0}^7 C_{7,j}e^{-\frac 52s}h_{7-j}h_j
           +e^{-2s}C_{4,0}^2h_8h_0
  +2C_{4,0}e^{-\frac 52 s}\sum_{j=0}^2 C_{5,j}h_{9-j}h_j             +O\left(s^2e^{-3s}\right), \nonumber
\end{align}
for some real coefficients $C_{7,j}$ with $j=0,\dots,7$, 
with the notations given after \eqref{taylor2-}.

\bigskip

With this expansion, we are ready to find the contradiction which will
show that
%the hypothesis
\eqref{L00} doesn't hold, concluding thus the
argument of Part 4.\\
%proof of Step 4.\\
% Since this is the concluding argument, we will give some more details,
% regarding the relative size of the different components in the usual
% expansion.

\medskip

Starting from the Taylor expansion \eqref{taylor3*} and using the
usual strategy, we may update the tool-table before
%\eqref{ennio}
\eqref{c66}
and write the following (note that we include only the terms generating 
$e^{\alpha \tau}$ with $\alpha >0$ and that we change the notation by
avoiding to reproduce the coefficients of the first column in the
following; in other words, the second to forth column have to be
multiplied by the first in order to get the desired expression):

\bigskip

%\begin{center}

\hspace{-3cm}
% \begin{table}
% \caption{\label{table1} Expanding directions on the solution}
\begin{tabular}{|c|c|c|c|}
\hline
&$v_{b,0,0}(\san+\tau)$  &$v_{b,1,0}(\san+\tau)$ &$v_{b,0,1}(\san+\tau)$\\
\hline
&&&\\
$C_{4,0}$&$e^\tau A^4 B_n^4e^{-\san}$&$4e^{\frac \tau 2} A^3 B_n^3e^{-\san}$&0\\
&&&\\
$C_{5,i}$&$e^\tau A^5 B_n^{5-i}e^{-\frac 32 \san}$&$(5-i)e^{\frac \tau 2} A^4 B_n^{4-i}e^{-\san}$&$ie^{\frac \tau 2} A^4 B_n^{5-i}e^{-\san}$\\
\hline
&&&\\
$C_{4,0}^2\gamma_{4,4,6}$& $e^\tau A^6 B_n^6 (\san+\tau)e^{-2\san}$ &$6e^{\frac\tau 2}A^5 B_n^5 (\san+\tau)e^{-2\san}$ &0\\
\hline
&&&\\
$C_{6,i}$& $e^\tau A^6 B_n^{6-i} e^{-2\san}$
                         &$(6-i)e^{\frac \tau 2} A^5 B_n^{5-i}
                           e^{-2\san}$ &$ie^{\frac \tau 2} A^5
                                         B_n^{6-i} e^{-2\san}$\\
  \hline
  &&&\\
  $C_{4,0}C_{5,i}\gamma_{4,5-i,7-i}$
  &$e^\tau A^7B_n^{7-i} (\san+\tau)e^{-\frac 52 \san}$
                         &$(7-i)e^{\frac\tau 2}A^6 B_n^{6-i} (\san+\tau)e^{-\frac 52 \san}$
                          &$i e^{\frac\tau 2}A^6B_n^{7-i}  (\san+\tau)e^{-\frac 52 \san}$\\
\hline
  &&&\\
  $C_{7,i}$ &$e^\tau A^7B_n^{7-i}e^{-\frac 52 \san}$
                         &$(7-i)e^{\frac\tau 2}A^6 B_n^{6-i}e^{-\frac 52 \san}$
                          &$i e^{\frac\tau 2}A^6B_n^{7-i} e^{-\frac 52 \san}$\\
\hline
&&&\\
  $C_{4,0}^2$&$e^{2\tau} A^8 B_n^8e^{-2\san}$
                         &$e^{\frac 32\tau} A^7 B_n^7e^{-2\san}$ &0\\
  \hline
  &&&\\
  $C_{4,0}C_{5,i}$
  &$e^{2\tau} A^9 B_n^{9-i} e^{-\frac 52 \san}$
   &$e^{\frac 32 \tau} A^8B_n^{8-i}  e^{-\frac 52 \san}$
  &$e^{\frac 32 \tau} A^8B_n^{9-i}  e^{-\frac 52 \san}$\\                
\hline
&&&\\
$O(s^2e^{-3s})$&$e^\tau O(\san+\tau)^2e^{-3\san}$
&$e^{\frac \tau 2} O(\san+\tau)^2e^{-3\san}$
&$e^{\frac \tau 2} O(\san+\tau)^2e^{-3\san}$\\
% &&&\\
% $\bar v_b(y_b,\san+\tau)$ &$\bar v_{b,0,0}(\san+\tau)$&$\bar v_{b,0,1}(\san)$  &$\bar v_{b,1,0}(\san+\tau)$\\ 
\hline
\end{tabular}
%\end{table}
%\end{center}

 \bigskip

As we have written right after the previous table (given right before
\eqref{c66}), the terms coming with $e^{2\tau}$ and $e^{3\tau /2}$ are
not relevant for our formal argument.
 % \textbf{Attention !!!! les termes avec $e^{2\tau}$ et $e^{\frac 32
 %     \tau}$, c'est du grand n'importe quoi...}

 \medskip
 
From this table, and focusing only on the terms bearing $e^{\frac \tau
  2}$ (remember, this is formal), we may write the following expansion:
\begin{align*}
 & v_{b,1,0}(\san+\tau)
  =e^{\frac \tau 2} \left\{4A^3B_n^3C_{4,0}e^{-\san}
    + 3A^4 B_n^2 e^{-\frac 32 \san}(C_{5,2}+O(B_n))\right.\\
    & \left. +2A^5B_n e^{-2\san}(C_{6,4}+O(B_n))
    +A^6e^{-\frac 52 \san}(C_{7,6}+O(B_n))
  +O(\san^2 e^{-3 \san})\right\}.
\end{align*}
Since $B_n \gg e^{-\frac \san 2}$ by \eqref{noquad} (translated in
particular in \eqref{bnsize3}), we clearly see that
\[
v_{b,1,0}(\san+\tau)
\sim 4e^{\frac \tau 2} A^3B_n^3C_{4,0}e^{-\san}
  \mbox{ as }n\to \infty,
\]
since $C_{4,0}\neq 0$ by \eqref{c40<0-}. Choosing a suitable $\tau$,
we reach a contradiction with \eqref{uij} as usual, at least formally.
Now, on a rigorous level, Proposition \ref{propunif} is not enough, and one needs to
use Proposition \ref{propexp4} and the following argument
to conclude (note that Proposition \ref{propexp4} can indeed be applied here,
since the hypothesis \eqref{rough} is fulfilled thanks to
\eqref{taylor3*}). Therefore, the limit $L_0$ given in \eqref{deflim} is zero.

 \medskip

 As a conclusion to
Subsection \ref{P3},
 %Part \PDeux,
 whenever we are in the superquadratic case
 \eqref{noquad}, only the $3/2$ power regime is allowed, with a
 constant $L_0$ given by \eqref{L0}. In the next
subsection,
% part,
 we will investigate the quadratic regime.
 
\bigskip

%%%%%%%%%%%%%%%%%%%%%%%%%%%%%%%%
%%%%%%%%%%%%%%%%%%%%%%%%%%%%%%%%
\subsection{
% \textbf{Part \PTrois:
The quadratic regime}\label{P4}
%%%%%%%%%%%%%%%%%%%%%%%%%%%%%%%%
%%%%%%%%%%%%%%%%%%%%%%%%%%%%%%%%

This part is devoted to the study of the quadratic regime mentioned on
page \pageref{3cases}. Precisely, we assume that
\begin{equation}\label{quad2}
\aa \sim L\bb^2\mbox{ as }n\to \infty,
\end{equation}
for some $L>0$. We will show that $L$ can enjoy only a finite number
of values, which depend on the Taylor expansion of the solution.

\medskip

Our strategy in the same as
before:
% in Parts 1 and \PDeux:
use the Taylor expansion
of $v_0(y,s)$ to derive an expansion for $v_b(y,s)$ with $b=a_n$. The
non-growth condition imposed on the three components $v_{b,0,0}$,
$v_{b,1,0}$ and $v_{b,0,1}$ then implies some conditions on the
coefficients as usual. It happens that those conditions take the form
of polynomial equations on $L$, whose coefficients are derived from
the Taylor expansion.\\
% Combining those equations, we may
% show that $L$ satisfies another polynomial equation, with a lower
% degree, which decreases the number of possible values for $L$.\\
We proceed in 3 steps, where we successively improve the Taylor
expansion of the solution, in order to find the values of $L$.

\medskip

\textbf{Step 1: Expansion of order $e^{-\frac 32 \san}$ in $v_{b,1,0}$}

To begin with, let us recall that
% we already know from Steps 1 and 4 in Part
% 1 that
% \begin{equation}\label{c55*}
% C_{4,4}=C_{4,3}=C_{5,5}=0,
% \end{equation}
% and that
the expansion
\eqref{do}
is valid here.
In particular, the table given right
before \eqref{v1b0t-} will be useful in our argument.
Noting that
\begin{equation}\label{estbn}
B_n \sim LA e^{-\frac \san 2}\mbox{ as }n\to \infty
\end{equation}
from \eqref{quad2} and \eqref{an2}, then,
using our usual strategy,
we prove formally that
% we can prove (at least on a formal level and
% using the table right before \eqref{v1b0t-}) that
\begin{equation*}%\label{growth}
v_{b,1,0}(\san +\tau)=e^{\frac \tau 2} A^4 e^{-\frac 32 \san}\left(2C_{4,2}L+C_{5,4}+o(1)\right)
\mbox{ as }n\to \infty.
\end{equation*}
% (on a rigorous level, note that Proposition \ref{propunif} holds here,
% since it only uses the fact that $C_{4,4}=C_{4,2}=C_{5,5}$, which is
% the case here; using the ODE argument given after Proposition
% \ref{propunif} allows us to conclude).\\
Given the exponential growth factor,
% in \eqref{growth},
this implies that
\begin{equation}\label{c4254}
2C_{4,2}L+C_{5,4}=0
\end{equation}
(in order to justify rigorously this estimate, simply note that Proposition \ref{propunif} holds here,
since it only uses the fact that
\eqref{do} holds,
%$C_{4,4}=C_{4,3}=C_{5,5}=0$,
which is
the case here; using the ODE argument given after Proposition
\ref{propunif} allows us to conclude).\\
If $C_{4,2}\neq 0$, then $C_{4,2}<0$ from \eqref{discriminant}, and \eqref{c4254} implies that
\[
L=-\frac{C_{5,4}}{2C_{4,2}}.
\]
%which is independent from the sequence $a_n$ and we are done.
Since $L>0$, this implies that $C_{5,4}>0$.
Thus, we assume in the following that
\begin{equation}\label{c42*}
C_{4,2}=0.
\end{equation}
Since the multilinear form in
\eqref{defmult}
%\eqref{A4}
is nonnegative, this implies that
\begin{equation}\label{c41*}
C_{4,1}=0. 
\end{equation}
Using \eqref{c4254}, we also derive that
\begin{equation}\label{c54*}
C_{5,4}=0.
\end{equation}

\bigskip

\textbf{Step 2: Expansion of order $e^{-2\san}$ in the 3 expanding components}

Starting from the second order expansion
\eqref{do}
%\eqref{expw0}
and using
% \eqref{c55*},
\eqref{c42*}, \eqref{c41*} and \eqref{c54*}, we may use the strategy of Proposition \ref{propexp} to derive the following third order Taylor expansion, analogous to \eqref{taylor2-} above (with the same notations):
\begin{align}
 v_0(y,s)=&-\frac {e^{-2s}}3C_{4,0}^2 \gamma_{4,4,0}h_0h_0
-\frac {e^{-2s}}2C_{4,0}^2 \gamma_{4,4,2}h_2h_0
+\left[C_{4,0}e^{-s}- e^{-2s}C_{4,0}^2 \gamma_{4,4,4}\right]h_4h_0\nonumber\\
&+e^{-\frac 32s}\sum_{j=0}^3C_{5,j}h_{5-j}h_j
+se^{-2s}C_{4,0}^2 \gamma_{4,4,6}h_6h_0
+e^{-2s}\sum_{j=0}^6C_{6,j}h_{6-j}h_j\nonumber\\
&+e^{-2s}C_{4,0}^2h_8h_0+O\left(se^{-\frac 52s}\right),\label{taylor3**}
\end{align}
for some real coefficients $C_{6,j}$ with $j=0,\dots,6$.\\
Using our usual strategy, we derive the following expansion:
\begin{equation*}%\label{beloubet}
%v_{b,0,0}(\san +\tau) &=e^\tau A^6 e^{-2\san}(C_{6,6}+o(1)),\\
v_{b,1,0}(\san +\tau) =e^{\frac \tau 2} A^5 e^{-2\san}(2LC_{5,3}+C_{6,5}+o(1)),
\end{equation*}
at least on a formal level (this time, the table right before
\eqref{formal} is useful to derive this,
%(here again, the table given right before \eqref{v1b0t-} is useful,
together with the estimate \eqref{estbn} on $B_n$).
%As for the rigorous justification, 
The exponential growth factor
%in \eqref{beloubet}
then implies that
\begin{equation}\label{c5365}
2LC_{5,3}+C_{6,5}=0
\end{equation}
(as for the rigorous justification, again, Proposition \ref{propunif}
is sufficient, since the power of $A$ in the linear term in the ODE
solution is $5$, larger that $4$, which is the power of the quadratic contribution).\\
If $C_{5,3}\neq 0$, then we see that
\[
L=-\frac{C_{6,5}}{2 C_{5,3}}.
\] 
% which is independent from the sequence $a_n$ and we are all set.
Thus, we assume in the following that
\begin{equation}\label{c53*}
C_{5,3}=0.
\end{equation}
Using \eqref{c5365}, we see that
\begin{equation}\label{c65*}
C_{6,5}=0.
\end{equation}
Writing the expansion of $v_{b,0,0}$ this time, we formally see from
\eqref{estbn} and the table given before
\eqref{formal}
% \eqref{v1b0t-}
that
\[
v_{b,0,0}(\san +\tau) =e^\tau A^6 e^{-2\san}(C_{6,6}+o(1)),
\]
and the growth factor implies that
\begin{equation}\label{c66*}
C_{6,6}=0
\end{equation}
(again, the rigorous justification uses Proposition \ref{propunif} and
the difference of the powers of $A$ between the linear term and the
quadratic term in the solution of the ODE ($6$ against $4$)).

\bigskip

\textbf{Step 3:
Expansion of order $e^{-\frac 52 \san}$ in two expanding components}
%  Taylor expansion of $v_0(y,s)$ up to the forth order}

At this stage, we need to further improve expansion \eqref{taylor3**}
up to the forth order. Starting from \eqref{taylor3**} and using the
strategy of Proposition \ref{propexp}, together with 
\eqref{c53*}, \eqref{c65*} and \eqref{c66*}, we write a similar
expansion to \eqref{taylor3*}:
\begin{align}
  &v_0(y,s)=
             -\frac {e^{-2s}}3C_{4,0}^2 \gamma_{4,4,0}h_0h_0
 -\frac{2e^{-\frac 52s}}3C_{4,0}\sum_{j=0}^1C_{5,j}\gamma_{4,5-j,1-j}h_{1-j}h_j\nonumber\\
        &   -\frac {e^{-2s}}2C_{4,0}^2 \gamma_{4,4,2}h_2h_0
-e^{-\frac 52s}C_{4,0}\sum_{j=0}^2C_{5,j}\gamma_{4,5-j,3-j}h_{3-j}h_j\nonumber\\
&+\left[C_{4,0}e^{-s}- e^{-2s}C_{4,0}^2 \gamma_{4,4,4}\right]h_4h_0\nonumber\\
&+\sum_{j=0}^2[C_{5,j}e^{-\frac 32s} -2e^{-\frac 52s}C_{4,0}C_{5,j}\gamma_{4,5-j,5-j}] h_{5-j}h_j \label{taylor4*} \\
&+se^{-2s}C_{4,0}^2 \gamma_{4,4,6}h_6h_0
                                                            +e^{-2s}\sum_{j=0}^4C_{6,j}h_{6-j}h_j%\nonumber\\
  +\sum_{j=0}^2 2se^{-\frac 52s}C_{4,0}C_{5,j}\gamma_{4,5-j,7-j}h_{7-j}h_j\nonumber\\
  &+\sum_{j=0}^7 C_{7,j}e^{-\frac 52s}h_{7-j}h_j%\nonumber\\
           +e^{-2s}C_{4,0}^2h_8h_0%\nonumber\\
 +2C_{4,0}e^{-\frac 52 s}\sum_{j=0}^2 C_{5,j}h_{9-j}h_j             +O\left(s^2e^{-3s}\right),\nonumber
\end{align}
for some real coefficients $C_{7,j}$ with $j=0,\dots,7$, 
with the notations given after \eqref{taylor2-}.

\medskip

% \textbf{Step 4: Expansion of order $e^{-\frac 52\san}$}

Starting from the table in
Step 3 of Part 4
%Step 11 in Part \PDeux $ $
above, together with estimate
\eqref{estbn} on $B_n$, we formally obtain the following, thanks to our usual strategy:
\begin{align}
v_{b,0,0}(\san+\tau)&=e^\tau A^7 e^{-\frac 52 \san}(C_{7,7}+o(1)),\label{tony}\\
v_{b,1,0}(\san+\tau)&=e^{\frac \tau 2}A^6 e^{-\frac 52 \san}(4L^3 C_{4,0}+3L^2 C_{5,2}+2LC_{6,4}+C_{7,6}+o(1)).\nonumber
\end{align}
The growth factors both imply that (at least formally)
\begin{equation}\label{c77*}
C_{7,7}=0\mbox{ and }4L^3 C_{4,0}+3L^2 C_{5,2}+2LC_{6,4}+C_{7,6}=0.
\end{equation}
In particular, we see that $L$ solves an equation of degree 3
(remember that $C_{4,0}\neq 0$ from \eqref{discriminant}). Thus, it can at most bear 3 values. We
need of course to justify \eqref{c77*} rigorously.
For that, we need to refine Proposition \ref{propexp4} up to the
order $e^{-\frac 32  \san}$. More precisely, this is our statement:
%%%%%%%%%%%%%%%%%%%%%%%%%%%%%%
%%%%%%%%%%%%%%%%%%%%%%%%%%%%%%
\begin{prop}[Uniform estimate of $v_{a_n}(y,\san)$ up to the order
  $e^{-\frac 32  \san}$]\label{propexp5}
  With all the information we have in this Step 3 of Subsection
  \ref{P4}, 
  we claim that
 that for all $A\ge 1$ and $\dd \ge 1$, for $n$ large enough, we have
 for all
$s\in [\san, \dd \san]$,
 %$s\ge \san$,
\[
  \|v_{a_n}(y,s)-C_{4,0}e^{-s}h_4(y_1)\|_{L^2_\rho}\le
CA^2e^{-\frac 32 \san}
  %\min(CA^2 e^{-\san},C(s-\san)A^4e^{-2\san}+CA^2e^{-\frac 32 \san}).
\]
\end{prop}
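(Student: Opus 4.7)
\textbf{Proof plan for Proposition \ref{propexp5}.}
The strategy mirrors the proof of Proposition \ref{propexp4}, but exploits the refined fourth-order Taylor expansion \eqref{taylor4*} together with all the cancellations $C_{4,1}=C_{4,2}=C_{5,3}=C_{5,4}=C_{6,5}=C_{6,6}=0$ established in Steps 1 and 2 of Subsection \ref{P4}. Set
\[
g(y_b,s)=v_{a_n}(y_b,s)-C_{4,0}e^{-s}h_4(y_{b,1}).
\]
Since $\mathcal{L}(h_4(y_1))=-h_4(y_1)$, the subtracted term is an exact solution of the linear part of \eqref{eqv}, hence $g$ satisfies $\partial_s g = \mathcal{L} g + v_{a_n}^2$.

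First I would establish the initial estimate $\|g(\san)\|_{L^2_\rho}\le CA^2e^{-\frac 32 \san}$. Starting from \eqref{taylor4*} together with the change of variables \eqref{wbw-} specialized to $b=a_n$, $\tau=0$, I would expand every $h_k(y_{b,1}+AB_n)$ and $h_k(y_{b,2}+A)$ by the binomial identity \eqref{binomial}, and insert $B_n\sim LAe^{-\san/2}$ from \eqref{estbn}. Each order-$e^{-\san}$ term other than $C_{4,0}e^{-\san}h_4(y_{b,1})$ either carries at least one factor $AB_n\sim LA^2e^{-\san/2}$ (giving corrections of size $A^2e^{-3\san/2}$), or is identically zero by our cancellations; the $e^{-3\san/2}$ level contributions from the $C_{5,j}$ terms with $j\in\{0,1,2\}$ produce polynomials with $L^2_\rho$-norm compatible with the bound $CA^2e^{-3\san/2}$. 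The explicit power $A^2$ here is precisely what will allow the subsequent forward integration to close.

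Next I would propagate $g$ forward on $[\san,\dd\san]$ by projecting onto the eigenspaces of $\mathcal{L}$. For the three expanding modes $h_0h_0$, $h_1h_0$, $h_0h_1$, the scalar ODEs \eqref{eqvij} are used: their initial data, read off from the table preceding \eqref{c66} after inserting \eqref{estbn} and the cancellations, stay below $CA^2e^{-\frac 32\san}$, and the forced ODE integration (with the quadratic source controlled by Proposition \ref{propunif} after the $L^2_\rho\to L^4_\rho$ upgrade via Lemma \ref{lemVel}, so that $\|v_{a_n}^2(s)\|_{L^2_\rho}\le CA^4e^{-2\san}$) preserves this size up to a multiplicative constant depending only on $\dd$. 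For the neutral and contracting projections the semigroup $e^{\tau\mathcal{L}}$ does not amplify, and Duhamel directly yields a bound of order $CA^2e^{-\frac 32\san}$.

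The main obstacle is the $\lambda=1$ component $v_{b,0,0}-0$ of $g$, which is amplified by $e^\tau$ up to $e^{(\dd-1)\san}$. Here the crucial input is $C_{6,6}=0$ from Step 2, which kills the would-be $A^6e^{-2\san}$ contribution to the initial projection of $g$ on $h_0h_0$; what survives is of order $A^7e^{-\frac 52\san}$, so that after amplification by $e^\tau\le e^{(\dd-1)\san}$ one stays well within $CA^2e^{-\frac 32\san}$ for $n$ large enough. An analogous analysis on $v_{b,1,0}$ and $v_{b,0,1}$, using $C_{5,3}=C_{5,4}=C_{6,5}=0$ and $C_{4,1}=C_{4,2}=0$, controls the two $\lambda=\tfrac12$ modes. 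Summing the three projections with the contracting tail yields the announced uniform bound.
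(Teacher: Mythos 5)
The crucial breakdown in your proposal is the forward-in-time propagation of the expanding modes of $g$. You claim that the $\lambda=1$ projection $v_{b,0,0}(\san)$ of $g$ is of order $A^7e^{-\frac 52\san}$ after $C_{6,6}=0$, and that amplification by $e^\tau\le e^{(\dd-1)\san}$ then "stays well within $CA^2e^{-\frac32\san}$." Both points fail. First, the size of $v_{b,0,0}(\san)$ is actually $O(e^{-2\san})$, not $O(A^7e^{-\frac 52\san})$: the term $-\frac{e^{-2s}}3C_{4,0}^2\gamma_{4,4,0}h_0h_0$ in \eqref{taylor4*} survives the shift by $b$ and contributes $-\frac{e^{-2\san}}3C_{4,0}^2\gamma_{4,4,0}$ directly to $v_{b,0,0}(\san)$ (the paper records precisely this, just before Proposition \ref{propexp5}). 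The cancellation $C_{6,6}=0$ only removes the $A^6e^{-2\san}$ piece, not the $A$-independent one. Second, and more fundamentally, forward-pushing $v_{b,0,0}$ with amplification $e^{\tau}$ gives $O(e^{-2\san})\cdot e^{(\dd-1)\san}=O(e^{(\dd-3)\san})$, which blows up for $\dd>3$; the asserted "multiplicative constant depending only on $\dd$" is actually $e^{(\dd-1)\san}$ and grows with $n$. The same defect affects the $\lambda=\tfrac12$ modes with amplification $e^{\tau/2}$.

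The paper does not attempt a forward integration of the expanding modes. It reuses the estimates \eqref{a1}--\eqref{a4} established in the proof of Proposition \ref{propexp4}; in particular \eqref{a1} bounds $|v_{a_n,i,j}(s)|\le CA^4e^{-2\san}$ for $i=0,1$ and all $s\in[\san,\dd\san]$. That bound on the expanding modes is not obtained from the initial data at $\san$ at all: it comes from the fact that $a_n$ is a blow-up point, hence $\|v_{a_n}(s)\|_{L^2_\rho}$ stays small uniformly for $n$ large (Proposition \ref{propunif}, which rests on the Liouville theorem), so the expanding modes are pinned down by this endpoint constraint, effectively a backward-in-time integration. Your scheme, by contrast, has no mechanism to prevent the $\lambda=1$ and $\lambda=\tfrac12$ modes from escaping, so the argument does not close for $\dd$ arbitrary. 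To fix the proposal you would need to replace the forward propagation by the mechanism of Proposition \ref{propexp4}, i.e.\ bound the expanding modes via Proposition \ref{propunif} and then feed the resulting \eqref{a1}--\eqref{a4} back into the decomposition, with your refined initial data at $s=\san$ supplying the needed $A^2e^{-\frac32\san}$ scale only for the neutral and decaying part of $g$.
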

%%%%%%%%%%%%%%%%%%%%%%%%%%%%%% 
%%%%%%%%%%%%%%%%%%%%%%%%%%%%%%
\begin{proof}
See Subsection \ref{secunif}.
\end{proof}
As one may see by comparing the 2 propositions, the new version
replaces the $C(A)$ constant appearing in Proposition
\ref{propexp4} by $CA^2$. With this improvement, the argument given right after
that proposition works (thanks to the difference between the powers of
$A$), confirming the validity of \eqref{c77*}.
Note however that expansion \eqref{tony} is valid for $\tau$ large,
but not for $\tau=0$; for that reason, one has to carefully use
\eqref{taylor4*} to derive that
\[
  v_{b,0,0}(\san) =- \frac{e^{-2\san}}3C_{4,0}^2\gamma_{4,4,0}
  + e^{-\frac 52 \san}(-\frac 23 AC_{4,0}C_{5,1}\gamma_{4,4,0}
  +A^7C_{7,7} +o(1)).
\]
Apart from that, the adaptation of the argument given right after
Proposition \ref{propexp4} is straightforward.

\medskip

This concludes the proof of Theorem \ref{cor0m} when $m=4$, if one assumes the
result of  Propositions \ref{propunif},
\ref{propexp4} and \ref{propexp5}. In order to finish the argument,
those propositions will be proved in the next subsection.

%%%%%%%%%%%%%%%%%%%%%%%%%%
%%%%%%%%%%%%%%%%%%%%%%%%%%
\subsection{Uniform estimates for $w_{a_n}$}\label{secunif}
%%%%%%%%%%%%%%%%%%%%%%%%%%
%%%%%%%%%%%%%%%%%%%%%%%%%%

This subsection is dedicated to the proofs of Propositions \ref{propunif},
\ref{propexp4} and \ref{propexp5}.
This
%which
is the only missing element
to terminate the proof of Theorem \ref{cor0m} in the case $m=4$, given in the previous
subsections.

\begin{proof}[Proof of Proposition \ref{propunif}]
We will in fact sketch the idea, and avoid giving details.\\
Let us first start by recalling from Khenissy, Rebai and Zaag \cite{KRZihp11} that 
\begin{equation}\label{sim24}
v_{a_n}(y,s) \sim P_{\{2\le i \le 4\}}(v_{a_n}) \mbox{ as } s\to \infty,
\end{equation}
uniformly for $n$ large enough (see Theorem 1 page 4 in that paper).
We also know from \eqref{uniliou} that
\[
\|v_{a_n}(s)\|_{L^2_\rho}\to 0\mbox{ as } s\to \infty,
\]
uniformly in $n$.
% large enough (this was already stated in
% \eqref{uniliou}; as we wrote earlier, it is a consequence of the Liouville
% theorem of Merle and Zaag in \cite{MZcpam98} and \cite{MZma00}).\\
From \eqref{sim24}, our desired conclusion will follow if we 
fix $\cc_0>0$ such that for any $A\ge 1$ and $\dd \ge 1$, 
\begin{equation}\label{naila}
\forall i=2,3,4,\;\;\forall j=0,\dots,i,\;\;
\forall s\in [\san, D\san],
%\forall s\ge \san,
\;\;|v_{a_n,i,j}(s)|\le \cc_0 A^2 e^{-\san},
\end{equation}
whenever $n$ is large enough.\\
We proceed in 2 steps to fix such a $\cc_0$:
in Step 1, we initialize \eqref{naila} at $s=\san$, then, in Step 2, we proceed by contradiction to prove it for $s\ge \san$ (the integration of the projections of equation \eqref{eqv} on the various coordinates will be crucial for our argument).\\
\textit{Step 1: Initialization of \eqref{naila}}. Since there is a universal constant $C_2\ge 1$ such that for any $v\in L^2_\rho$ and $i=2,3,4$, 
\begin{equation}\label{equiv}
\frac 1{C_2} \max_{j=0,\dots,i}|v_{i,j}|\le \|P_{i}(v)\|_{L^2_\rho}\le C_2 \max_{j=0,\dots,i}|v_{i,j}|,
\end{equation}
using \eqref{fai} together with \eqref{sim24}, we see that 
\begin{equation}\label{fai2}
\forall i=2,3,4,\;\;\forall j=0,\dots,i,\;\;
|v_{a_n,i,j}(\san)|\le \bar C A^2 e^{-\san},
\end{equation}
whenever $n$ is large enough, for some universal constant $\bar C>0$.\\
% Consider
% \begin{equation}\label{C11}
% C_1\ge 1
% \end{equation}
%  a universal constant to be fixed later (independent from $A$). 
Fixing
\begin{equation}\label{defc0}
\cc_0 =2 \bar C,
%  \cc_0= 2C_1\bar C,
\end{equation}
we guarantee from \eqref{fai2} that
\eqref{naila} holds at $s=\san$.\\
% at $s=\san$, we have
% \begin{equation}\label{init}
%   \forall i=2,3,4,\;\;\forall j=0,\dots,i,\;\;|v_{a_n,i,j}(\san)|\le
% \frac{\cc_0}2 A^2 e^{-\san},
%   %\frac {\cc_0}{2C_1}A^2 e^{-\san},
% \end{equation}
% whenever $n$ is large enough
% (note that the value of the universal constant $\cc_0$ will be fixed
% when the other universal constant $C_1$ will be fixed below in
% \eqref{fixC1}).
In particular, \eqref{naila} holds at $s=\san$.\\
%since $C_1\ge 1$.\\
\textit{Step 2: The contradiction argument}. Let us assume by
contradiction that \eqref{naila} is true for all $s\in [\san, s^*_n]$,
for some minimal $s^*_n< \dd \san$ and stops from being true at $s=s^*_n$.
From Step 1 and continuity, it follows that $s_n^*>\san$. This also implies
that we have an equality case at $s=s^*_n$, in the sense that
\begin{equation}\label{hakan}
|v_{a_n,i,j}(s_n^*)|= \cc_0A^2 e^{-\san},\mbox{ for some }i=2,3,4\mbox{ and } j=0,\dots,i.
\end{equation}
Starting from
\eqref{defc0},
%\eqref{init},
we integrate the ODE
\eqref{eqvij}
satisfied by $v_{a_n,i,j}$, with $2 \le i \le 4$:
\[
\forall s\ge s_n,\;\;v_{a_n,i,j}'(s)=(1-\frac i2) v_{a_n,i,j}(s)+\int k_{i-j}(y_1)k_j(y_2) v_{a_n}(y,s)^2\rho(y) dy. 
\]
Since \eqref{naila} is valid for all $s\in [\san, s^*_n]$, using \eqref{sim24}, we have
\[
|\int k_{i-j}(y_1)k_j(y_2) v_{a_n}(y,s)^2\rho(y) dy|\le C\cc_0^2A^4e^{-2\san}
\]
% for some universal $C_0>0$
(in fact, we need here an estimate in the $L^4_\rho$ norm and not just
in the $L^2_\rho$, and this is possible thanks to parabolic
regularity, as we have already explained right after the statement of
Proposition \ref{propunif}).
% (\textbf{we may need some parabolic regularity here}).
Therefore, for all $s\in [\san, s^*_n]$, we have
\[
|v_{a_n,i,j}'(s)-(1-\frac i2) v_{a_n,i,j}(s)|\le C\cc_0^2A^4e^{-2\san}.
\]
%If $i=3,4$ and $j=0,\dots,i$, we integrate this differential inequality, ending with
% \begin{align}
% |v_{a_n,i,j}(s)|&\le e^{(1-\frac i2)(s-\san)}|v_{a_n,i,j}(\san)|+\frac{C\cc_0^2A^4}{\frac i2 -1}e^{-2\san},\nonumber\\
%  &\le \frac {\cc_0}{2C_1} A^2e^{-\san}+\frac{C\cc_0^2A^4}{\frac i2 -1}e^{-2\san}\le \frac {3\cc_0}{4C_1}A^2 e^{-\san},\label{est34}
     %   \end{align}
Recalling that $s_n^*\le \dd\san$, we integrate this differential
inequality, ending with
\begin{align}
|v_{a_n,i,j}(s)|&\le e^{(1-\frac i2)(s-\san)}|v_{a_n,i,j}(\san)|+C\cc_0^2A^4e^{-2\san}(s-\san),\nonumber\\
 &\le \frac {\cc_0}2 A^2e^{-\san}+C\cc_0^2A^4e^{-2\san}(\dd-1)\san\le \frac {3\cc_0}4A^2 e^{-\san},\label{est34}
\end{align}
if $n$ is large enough. This means in particular that \eqref{hakan} is
not true and concludes the proof of Proposition \ref{propunif}.
\end{proof}

\begin{proof}[Proof of Proposition \ref{propexp4}]
Following Proposition \ref{propunif}, we assume that \eqref{rough} holds.
We also recall from \eqref{defbn-} that $B_n \to 0$ as $n\to \infty$.\\
We proceed in 2 steps: we first initialize the estimate at $s=\san$,
then, we use the bound in Proposition \ref{propunif} to derive then integrate differential inequalities satisfied by the various components.\\ 
\textit{Step 1: Initialization at $s=\san$}. Using the expansion
\eqref{rough} and using our geometric transformation as usual, 
we may write the following estimates for the various components of the
solution at $s=\san$ (for the last estimate, remember that
\eqref{rough} holds also  in
$L^4_\rho$ and use the Cauchy-Schwarz identity as we did for
\eqref{vbar2}):
\begin{align}
v_{a_n,4,0}(\san)&=C_{4,0}e^{-\san}+O(e^{-\frac 32 \san}),\label{ini1}\\
|v_{a_n,i,j}(\san)|&\le C_ie^{-\frac 32 \san}\mbox{ if }(i,j) \neq (4,0),\label{ini2}\\
\|P_{\{\lambda \le -\frac 32\}}(v_{a_n}(\san))\|_{L^2_\rho}&\le C(A)e^{-\frac 32 \san}.\label{ini3}
\end{align}
\textit{Step 2: Differential inequalities satisfied by the various
  components}. Using Proposition \ref{propunif} whose conclusion also
holds in $L^4_\rho$, as explained in the remark following its
statement, 
we may write
%from Proposition \ref{propunif}
the following differential inequalities, for $n$ large enough and for
all
$s\in[\san, \dd \san]$,
%$s\ge \san$:
\begin{align*}
\forall i\in \m N,\;\;\forall j=0,\dots,i,\;\;
|v_{a_n,i,j}'(s)-(1-\frac i2)v_{a_n,i,j}(s)|&\le C_i A^4 e^{-2\san},\\%\label{odeij}\\
\frac d{ds}\|P_{\{\lambda \le 1-\frac i2\}}(v_{a_n}(s))\|_{L^2_\rho}^2\le 
-2(1-\frac i2)\|P_{\{\lambda \le 1-\frac i2\}}(v_{a_n}(s))\|_{L^2_\rho}^2
+&C_i^2 A^8 e^{-4\san}.%\label{inequality}
\end{align*}
Integrating these various differential inequalities, we get the
following estimates, for $n$ large enough and for all
$s\in[\san, \dd \san]$,
%$s\ge \san$, 
\begin{align}
\forall i=0,1,\;\;\forall j=0,\dots,i,\;\;
|v_{a_n,i,j}(s)|&\le CA^4 e^{-2\san},\label{a1}\\
\forall j=0,1,2,\;\;|v_{a_n,2,j}(s)-v_{a_n,2,j}(\san)|&\le C(s-\san)A^4 e^{-2\san}, \label{a2}\\
\forall i=3,4,\;\;\forall j=0,\dots,i,\;\;
|v_{a_n,i,j}(s)-e^{(1-\frac i2)(s-\san)}v_{a_n,i,j}(\san)|&\le CA^4 e^{-2\san}, \label{a3}\\
\|P_{\{\lambda \le -\frac 32\}}(v_{a_n}(s))\|_{L^2_\rho}\le 
e^{-\frac32(s-\san)}\|P_{\{\lambda \le -\frac 32\}}(v_{a_n}(\san))\|_{L^2_\rho}&+CA^4 e^{-2\san}. \label{a4}
\end{align}
Using \eqref{ini1}-\eqref{ini3}, and recalling
that $s\le \dd \san$, 
%the uniform bound in Proposition \ref{propunif},
we conclude the proof of Proposition \ref{propexp4}.
\end{proof}

\begin{proof}[Proof of Proposition \ref{propexp5}]
  Note first that we can use all the information available in Step 3 of
  Subsection
\ref{P4}
  %\ref{P3}.
  For the proof, we follow the proof of Proposition \ref{propexp4} with only one
modification: we need to refine identities \eqref{ini1}-\eqref{ini3} so that we
reach the orders $e^{-\frac 32 \san}$. For that reason, we will keep only the
$e^{-s}$ and $e^{-\frac 32 s}$ terms in the expansion \eqref{taylor4*}
% (which holds here, since the proposition hypothesis takes into account
% all the 
and write
\[
  v_0(y,s)=C_{4,0}e^{-s} h_4h_0
  +e^{-\frac  32s}\sum_{j=0}^2C_{5,j}h_{5-j}h_j+O(se^{-2s})
\]
as $s\to \infty$, in $L^q_\rho(\m R^N)$, for any $q\ge 2$.
% (note that since
% by hypothesis, we take into account all the cancelations collected up
% to the statement of the proposition, the expansion \eqref{taylor4*}
% does hold).\\
  Then, using the estimate \eqref{estbn} on $B_n$ together with the
change of variables \eqref{wbw-} (with $\tau=0$), we may use the
binomial relation \eqref{binomial} to write
\begin{align*}
  v_b(y_b, \san) =
  &C_{4,0}e^{-\san}\left[h_4(y_{b,1})+4h_3(y_{b,1})AB_n+O(B_n^2)\right]\\
  &+e^{-\frac
    32\san}\sum_{j=0}^2C_{5,j}h_{5-j}(y_{b,1})h_j(y_b,2)+O(C(A)\san e^{-2\san})
\end{align*}
in $L^2_\rho(\m R^N)$ as $n\to \infty$, thanks to the same
justification as for \eqref{vbar2}. This can be translated as follows:
% we write the following expansions
% for these components of $v_b(y_b,\san)$:
\begin{align*}
  v_{b,3,0}(\san) &= 4C_{4,0}LA^2e^{-\frac 32 \san}+o(e^{-\frac 32 \san}),\\
  v_{b,4,0}(\san)&= C_{4,0}e^{-\san} +O\left(C(A)\san e^{-2\san}\right),\\
  \mbox{for }i=0,1,2\;\; v_{b,5,i}(\san)
   &= C_{5,i}e^{-\frac 32 \san}+O\left(C(A)\san e^{-2\san}\right),\\
  \sup_{(i,j) \not\in\{(3,0),(4,0), (5,0), (5,1), (5,2)\}}|v_{b,i,j}(\san)|&+\|P_{\{\lambda \le -\frac 32\}}(v_{a_n}(\san))\|_{L^2_\rho}
 \le C(A)\san e^{-2 \san}
\end{align*}
Using these estimates as initial data, together with the estimates
\eqref{a1}-\eqref{a4}, which hold here, since the hypotheses of
Proposition \ref{propexp4} hold also (because \eqref{rough} follows
from \eqref{taylor4*}, which holds like all the information of Step 3
in Subsection
\ref{P4},
%\ref{P3}),
we conclude the proof of
Proposition \ref{propexp5} (we also need Proposition \ref{propunif}).
% Since the differential inequalities in \eqref{odeij} and
% \eqref{inequality} hold here, and so do the result of their
% integration in \eqref{a1}-\eqref{a4}, t
\end{proof}

\medskip

We have just proved the  Propositions \ref{propunif},
\ref{propexp4} and \ref{propexp5} in this subsection, finishing this
way the proof of Theorem \ref{cor0m}.
%%%%%%%%%%%%%%%%%%%%%%%%%%%%%
%%%%%%%%%%%%%%%%%%%%%%%%%%%%%
\section{Rigidity
in the geometry of the blow-up set when
  % near non isolated blow-up points for general
   $m\ge 6$}\label{sec0m}
%%%%%%%%%%%%%%%%%%%%%%%%%%%%%
%%%%%%%%%%%%%%%%%%%%%%%%%%%%%
 This section is devoted to the proof of Theorem \ref{cor0m} when $m\ge
 6$. As for $m=4$, the
 proof uses the same strategy based on the geometric transformation \eqref{wbw000}
 given in Step 2 of the proof of Theorem \ref{cor0m}.
 However, the number of steps depends linearly on
$m$. For that reason, we have to proceed differently, at some point in
the proof. In addition, we have more complicated formulas, and of
course in the outcome, the result for $m\ge 6$ is less explicit,
hence, less spectacular
than for $m=4$.
Accordingly, we only give the main steps of the proof and don't insist on details. 

\medskip

\begin{proof}[Proof of Theorem \ref{cor0m} when $m\ge 6$]
 Consider $u(x,t)$ a
  solution of equation \eqref{equ} blowing up at
  time $T>0$.
 Assume that the origin is a
  non isolated blow-up point where $m(0)=m\ge 6$ is even and consider an arbitrary
  sequence of non-zero blow-up points $a_n=(a_{n,1},a_{n,2})$ converging to
  the origin as $n\to \infty$.\\
  (i) This item follows exactly as in the case $m=4$ (see Subsection
  \ref{P2} above). In particular, after extracting a subsequence and making a suitable change of variables, 
%    As we explained in the beginning of the proof of the case $m=4$
%    (see Section \ref{sec0}), we can assume that the solution is
%    nonnegative, without loss of generality. 
% Up to extracting a subsequence and to changing $u(x_1,x_2,t)$ by
% $u(\pm x_1,\pm x_2,t)$ (also a solution to equation \eqref{equ}),
we may assume that 
\begin{equation}\label{posm}%\label{an2>0}
\aa\ge 0\mbox{ and }a_{n,2}\ge 0\mbox{ for all }n\in \m N
\end{equation}
and
\begin{equation}\label{onem}
\aa=o(\bb).
\end{equation}
Note that Theorem \ref{th0m} holds here, and so does the expansion
\eqref{do}, and that the multilinear form
%in the first part of this expansion
in \eqref{defmult}
is non
zero and nonnegative.\\
(ii) Following \eqref{onem}, after the extraction of a subsequence
still denoted the same, we identify 3 possible regimes for $a_n$: superquadratic, quadratic and subquadratic,
% regimes,
as with $m=4$.
We then proceed in 3 parts:\\
- Part 1 is dedicated to the superquadratic case, where $\aa \gg
\bb^2$ as $n\to \infty$.\\
- In Part 2, we deal with the quadratic case, where $\aa \sim L \bb^2$,
for some $L>0$.\\
- Finally,
in Part 3,
%in Part 4,
we gather all the information and conclude the
proof of Theorem \ref{cor0m}.
% \textbf{Il faut insister sur le fait que l'on peut avoir
%   plusieurs branches, pas uniquement dans le cadran sup\'erieur droit,
%   et qu'il y a une question de rotation du rep\`ere \`a ne pas oublier}.

\bigskip

%%%%%%%%%%%%%%%%%%%%%%%%%%
%%%%%%%%%%%%%%%%%%%%%%%%%%
\textbf{Part 1: Quantified superquadratic regimes for $a_n$} 
%%%%%%%%%%%%%%%%%%%%%%%%%%
%%%%%%%%%%%%%%%%%%%%%%%%%%

In this part, we assume that
% the quadratic estimate on $a_n$ doesn't
% hold, which means that 
% for a subsequence (still denoted by $a_n$), we have 
\begin{equation}\label{noquadm}
a_{n,1}\gg a_{n,2}^2\mbox{ as }n\to \infty.
\end{equation}
We then proceed in
4
%several
steps:\\
- In Step 1, we show that  $C_{m,m-2}=C_{m,m-3}=0$, where those
coefficients appear in the expansion \eqref{do}.\\
- In Step 2,
we introduce a new parameter $\theta$ to
measure the convergence of $B_n=a_{n,1}/a_{n,2}$ to $0$ in exponential
scales of $\san$ defined in \eqref{an2}, and show that $\theta$ enjoys
only a finite number of values in $\m Q\backslash\{0\}$.\\
- In Step 3, we make a refinement of Step 2, by introducing one
further parameter $\alpha$ to measure the convergence of $B_n$ to $0$ in
polynomial corrections of exponential scales, and show that $\alpha$
enjoys only a finite number of values in $\m Q$.\\
- In Step 4, following Steps 2 and 3, we make one further refinement,
by introducing a new variable $\psi_n$ to quantify the convergence of
$B_n$ to $0$, and show that it converges (up to a subsequence) to
some value related to some root of a polynomial involving
the coefficients of the Taylor expansion of $v_0(y,s)$.

\bigskip

%%%%%%%%%%%%%%%%%%%%%%%%%%
%%%%%%%%%%%%%%%%%%%%%%%%%%
\textbf{Step 1: Proof of the fact  that $C_{m,m-2}=C_{m,m-3}=0$}
%if \eqref{noquadm} holds}
%%%%%%%%%%%%%%%%%%%%%%%%%%
%%%%%%%%%%%%%%%%%%%%%%%%%%

Considering the expansion \eqref{do}, we will now prove that
$C_{m,m-2}=C_{m,m-3}=0$.
%,  where those
% coefficients appear in the Taylor expansion \eqref{A4m}.\\
As in the case $m=4$, the nonnegativity of the multilinear form
\eqref{defmult}
%\eqref{defmult-m}
implies that it is enough to show that $C_{m,m-2}=0$.
Proceeding by contradiction, we assume that $C_{m,m-2}\neq 0$. Let us
reach a contradiction from the behavior of $w_b$ defined by
\eqref{wbw-}, where $b=a_n$. Using the notations and assumptions in
\eqref{defbn-}-\eqref{an2}, we see from \eqref{noquadm}, \eqref{defrn-} and \eqref{onem} that
\begin{equation}\label{sizebn0}
1\gg B_n \gg Ae^{-\frac \san 2}\mbox{ as }n\to \infty.
\end{equation}
Using the expansion
\eqref{do} and proceeding as in Part 1 of Subsection \ref{P3} when
$m=4$, both formally and rigorously,
% \eqref{expw0m},
we readily see that
\[
v_{b,1,0}(\san+\tau) \sim 2e^{\frac \tau 2}A^{m-1}B_nC_{m,m-2}e^{(1-\frac m2)\san}\mbox{ as }n\to \infty.
\]
The growth factor leads to contradiction. Thus, $C_{m,m-2}=0$ and
$C_{m,m-3}=0$ too.

\bigskip

%%%%%%%%%%%%%%%%%%%%%%%%%%
%%%%%%%%%%%%%%%%%%%%%%%%%%
\textbf{Step 2:  Measuring $B_n$ in exponential scales of $\san$}.
%%%%%%%%%%%%%%%%%%%%%%%%%% 
%%%%%%%%%%%%%%%%%%%%%%%%%%

% At this point,
% one may follow the pattern already introduced
% for $m=4$, and finish the proof. However, it happens that the number
% of steps grows linearly with $m$. For that reason, we proceed
% differently, by introducing a new parameter, whose size will be
% central in the proof. More precisely,
From
%\eqref{defbn-} (which holds here from \eqref{onem}) and
\eqref{sizebn0}, we may assume that 
\begin{equation}\label{deftheta}
  -\frac{\log B_n}{\san} \to \theta \in [0, \frac 12]
\mbox{ or equivalently that }B_n = e^{-(\theta+o(1))\san},
  \mbox{ as }n\to \infty,
\end{equation}
up to a extracting a subsequence, still denoted the same.
Now,
%In the following,
we are going to prove the following three-fold statement:
%%%%%%%%%%%%%%%%%%%%%%%%%%%%%%%%%
%%%%%%%%%%%%%%%%%%%%%%%%%%%%%%%%%
\begin{prop}\label{proptheta} $ $\\
  (i) The limit $\theta \in [0,\frac 12]$ defined in \eqref{deftheta} is rational.\\
(ii)
  %(i)bis
  It holds that $\theta\neq 0$.\\
(iii)
  %(ii)
  In fact, $\theta$ enjoys only a finite number of rational
  values in $E_1\cup E_2$ where the two sets $E_1$ and $E_2$ are
  defined below respectively in \eqref{defE1} and \eqref{defE2}.
 %  \begin{align*}
% E_1&=\{\frac L{2G}\;\; 1\le G\le 2m-4\mbox{ and }1\le L\le \min(m-3,
%      2m-4-G,\frac{m-2}{m-1}G)\},\\
% E_2 & =\{\frac L{2G}\;\; 1\le G\le 2m-3\mbox{ and }\frac{m-2}{m-1}G\le L\le \min(m-2, 2m-3-G,G)\}.
%   \end{align*}
 \end{prop}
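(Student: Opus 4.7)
My plan is to follow the template from Part 1 of Subsection \ref{P3} (the $m=4$ superquadratic case), where non-growth of the expanding modes of $w_b$ for $b = a_n$ translates into algebraic constraints on the Taylor coefficients of $v_0$ and on $B_n$. I would apply Proposition \ref{propexp} to obtain a Taylor expansion of $v_0$ to arbitrary order $M \ge 2m$, then use the geometric transformation \eqref{wbw-} together with the binomial identity \eqref{binomial}; this writes each relevant projection $v_{b,i,j}(\san)$, for $(i,j) \in \{(0,0),(1,0),(0,1)\}$, as a finite sum of terms of the form $c_{\alpha}\, A^{p(\alpha)}\, B_n^{q(\alpha)}\, \san^{r(\alpha)}\, e^{-\mu(\alpha)\san}$, where $c_\alpha$ is a polynomial in the Taylor coefficients of $v_0$, the integers $q(\alpha), r(\alpha) \in \m N$ and $\mu(\alpha) \in \frac12 \m N$. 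The index $\alpha$ runs over a finite set determined by $M$.

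\textbf{Rationality (i).} Substituting $B_n = e^{-(\theta + o(1))\san}$, each such term scales like $A^{p(\alpha)} \san^{r(\alpha)} e^{-(\mu(\alpha) + q(\alpha)\theta)\san}$. Using an analogue of Proposition \ref{propunif} together with refinements modelled on Propositions \ref{propexp4} and \ref{propexp5} (uniform up to the chosen truncation order), the $e^{\tau/2}$ growth-factor argument applied to $v_{b,1,0}(\san+\tau)$ forces the dominant term in the expansion to cancel, as in every step of Subsections \ref{P3}--\ref{P4}. If $\theta$ were irrational, the map $\alpha \mapsto \mu(\alpha) + q(\alpha)\theta$ would be injective on the finite index set; the dominant term would then be unique and its coefficient, being a non-trivial polynomial in the Taylor coefficients, would produce a contradiction with the uniform convergence \eqref{uij}. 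Hence two distinct indices $\alpha \neq \alpha'$ must share the minimal exponent, which forces $\theta = (\mu(\alpha) - \mu(\alpha'))/(q(\alpha') - q(\alpha)) \in \m Q$.

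\textbf{Non-triviality (ii).} If $\theta = 0$, then $B_n$ decays subexponentially and the ordering of the terms above is controlled entirely by $\mu(\alpha)$. The slowest exponential decay is $\mu = m/2 - 1$, attained by the $C_{m,j}$ contributions. Within those, since $B_n \to 0$, the dominant term corresponds to the smallest power of $B_n$, i.e.\ to the largest $j$ with $C_{m,j} \neq 0$. Having already established $C_{m,m-2} = C_{m,m-3} = 0$ in Step 1, the growth-factor argument applied to $v_{b,1,0}$ would successively force $C_{m,m-4} = C_{m,m-5} = \cdots = C_{m,0} = 0$, contradicting the non-vanishing of the multilinear form \eqref{defmult}. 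Thus $\theta > 0$.

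\textbf{Finiteness (iii) and main obstacle.} Each balancing pair $(\alpha,\alpha')$ yields at most one rational candidate in $(0,\tfrac12]$. Since the truncation order $M$ is finite, so is the index set, hence there are only finitely many such candidates. Splitting them according to whether the balancing occurs at the level of $v_{b,1,0}$ or at the level of $v_{b,0,0}$ (respectively $E_1$ and $E_2$ in the statement of Theorem \ref{cor0m}) completes (iii). The main obstacle is rigorous: extending Propositions \ref{propunif}, \ref{propexp4} and \ref{propexp5} to the arbitrary truncation order needed here for general $m$. This demands combining the Liouville-type asymptotics \eqref{sim24} of Khenissy--Rebai--Zaag, the parabolic regularity of Lemma \ref{lemVel}, and a cascade of ODE integrations on the various modes, delicate bookkeeping being necessary to keep the powers of $A$ in the linear term strictly larger than those in the quadratic contribution at each level, exactly as in Step 3 of Subsection \ref{P4}. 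Once these uniform estimates are in place, the combinatorial extraction of the finitely many rational values of $\theta$ reduces to the enumeration above.
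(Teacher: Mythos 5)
Your strategy is the same as the paper's: expand $v_0$ via Proposition~\ref{propexp}, transfer to $v_b$ with $b=a_n$ via \eqref{wbw-} and \eqref{binomial}, and use the $e^{\tau/2}$ growth factor on an expanding mode of $w_b$ to kill the leading term, so that non-codominance of the finitely many terms forces a relation on $\theta$. However, several points in your argument either fail or are missing, and I flag the most serious ones.

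First, the claim that ``if $\theta$ were irrational, the map $\alpha\mapsto\mu(\alpha)+q(\alpha)\theta$ would be injective'' is \emph{false} once the Taylor expansion contains resonant terms of the form $s^{i}e^{(1-k/2)s}$, which inevitably appear starting at order $k=2m-2$ (quadratic interaction). Two indices $(k,j,i)\neq(k,j,i')$ produce the \emph{same} pair $(\mu,q)$, so the exponential rate is the same regardless of $\theta$; dominance there is decided by the powers of $s_{A,n}$, not by irrationality of $\theta$. The paper handles this carefully (comparison at $\tau=0$ on a bounded time interval, and then distinguishing $i\neq i'$). Second, you never explain how to choose the truncation order $M$. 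This is not a detail: the paper's whole Case~2 (where $\theta\ge\frac{m-2}{2(m-1)}$) exists precisely because one must push the expansion to $M=\lceil(2\theta+1)(m-1)\rceil$ in order to guarantee that the one term you know to be non-zero, namely the $C_{m,l}$ contribution with $C_{m,l}\neq0$, is not drowned by the error term. Without that step, the set of ``visible'' terms may be empty. Third, ``its coefficient, being a non-trivial polynomial in the Taylor coefficients, would produce a contradiction'' does not, as stated, guarantee the dominant coefficient is non-zero; the paper instead restricts the index sets $H_1$, $H_2$ to coefficients that are \emph{known} to be non-zero and verifies these sets are non-empty by exhibiting $(m,l)$ inside them. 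Fourth, your account of the split $E_1/E_2$ is incorrect: both sets arise from the same projection $v_{b,1,0}$ (which is preferred because it involves lower powers of $B_n$ than $v_{b,0,0}$), and they are distinguished by the range of $\theta$ — $E_1$ corresponds to $\theta\in[0,\frac{m-2}{2(m-1)}]$ (truncation order $2m-3$, no resonance), $E_2$ to $\theta\in[\frac{m-2}{2(m-1)},\frac12]$ (truncation order $M$, resonant terms present) — not to a choice between $v_{b,1,0}$ and $v_{b,0,0}$.

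Your item (ii) sketch is essentially right, although the cleaner conclusion is that the single dominant $C_{m,j}$ term (nonzero, with the largest admissible $j\le m-4$) already yields the contradiction, without needing the iterated cancellation $C_{m,m-4}=\cdots=C_{m,0}=0$. And your acknowledgment that the uniform estimates (analogues of Propositions~\ref{propunif}, \ref{propexp4}, \ref{propexp5} at arbitrary order) are the principal technical obstacle matches the paper, which indeed only sketches that part.
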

%%%%%%%%%%%%%%%%%%%%%%%%%%%%%%%%% 
%%%%%%%%%%%%%%%%%%%%%%%%%%%%%%%%%
\begin{nb}
  In fact, as we will see from the proof, items
(ii) and (iii)
  %(i)bis and (ii)
  are by-products of
 the proof of item (i). For clarity, we dedicate the next step to the
 proof of item (i), then, we explain how to derive items
(ii) and (iii)
% (i)bis and (ii)
 in the following steps. 
\end{nb}
\begin{nb}
  Note that
  \[
E_1 \cup E_2 \subset \{\frac L{2G}\;\; 1\le G\le 2m-3\mbox{ and }1\le L\le \min(G,m-2) \}.
  \]
\end{nb}
\begin{proof}
  We will give the proof of the three items successively.

  \medskip

  \noindent - \textit{Proof of item (i) of Proposition \ref{proptheta}}. 

 Using the strategy of Proposition \ref{propexp}, we may refine the
Taylor expansion of $v_0(y,s)$ (equal to $w_0(y,s)-1$ by definition
\eqref{defv}) given in
\eqref{do} (recalling that  $C_{m,m-2}=C_{m,m-3}=0$ from Step 1),
%\eqref{expw0m},
up to the
order $O(se^{(2-m)s})$:
\begin{align}
 v _0(y,s)=& e^{(1-\frac m2)s}\sum_{j=0}^{m-4}C_{m,j}h_{m-j}(y_1)h_{j}(y_2) %\\
           +e^{\frac
             {1-m}2s}\sum_{j=0}^mC_{m+1,j}h_{m+1-j}(y_1)h_{j}(y_2)
            \nonumber\\
           &+e^{(1-\frac k2)s}\sum_{k=m+2}^{2m-3}\sum_{j=0}^k C_{k,j}h_{k-j}(y_1)h_j(y_2)
              +\bar v_0(y,s) \label{expv0max}% \nonumber
\end{align}
with $\bar v_0(y,s)=O(se^{(2-m)s})$ as $s\to \infty$.
% Of course, we used the fact that  $C_{m,m-2}=C_{m,m-3}=C_{m+1,m+1}=0$
% which follows from Step 1 of the present part.
Note
%also
that we stopped in this expansion when the quadratic term in the
equation \eqref{eqv} satisfied by $v_0(y,s)$ becomes relevant; this
way, we only have ``linear terms'' in our expansion.\\
Using the transformation \eqref{wbw000}, we may use the
expansion in
%\eqref{expw0m}
\eqref{expv0max}
to derive the
following expansion for $v_b(y_b,\san+\tau)$, which is analogous to
\eqref{vb1--}, with $\tau\ge 0$:
\begin{align}%\label{vb1--}
v_b(y_b,\san+\tau)&=e^{(1- \frac
                    m2)(\san+\tau)}\sum_{j=0}^{m-2}C_{m,j}h_{m-j}(y_{b,1}+AB_ne^{\frac \tau 2})h_{j}(y_{b,2}+Ae^{\frac\tau 2})\nonumber\\
&+e^{\frac {1-m}2(\san+\tau)}\sum_{j=0}^mC_{m+1,j}h_{m+1-j}(y_{b,1}+AB_ne^{\frac\tau 2})h_{j}(y_{b,2}+Ae^{\frac\tau 2})   \nonumber\\
           &+e^{(1-\frac
             k2)(\san+\tau)}\sum_{k=m+2}^{2m-3}\sum_{j=0}^k
             C_{k,j}h_{k-j}(y_{b,1}+AB_ne^{\frac\tau
             2})h_j(y_{b,2}+Ae^{\frac\tau 2})\nonumber\\
  &+\bar v_b(y_b,\san+\tau).\label{vb20}
%+\hat v_b(y_b,s_0)
\end{align}
As before, for more visibility, we may write the following table
giving the expansion for the 3 expanding components $v_{b,0,0}(\san+\tau)$,
 $v_{b,0,1}(\san+\tau)$ and $v_{b,1,0}(\san+\tau)$, for $\tau\ge 0$:

\bigskip

\hspace{-38mm}
%\begin{center}
\begin{tabular}{|c|c|c|c|}
\hline
&$v_{b,0,0}(\san+\tau)$  &$v_{b,1,0}(\san+\tau)$ &$v_{b,0,1}(\san+\tau)$\\
\hline
&&&\\
  $C_{m,j}$&$C_{m,j}e^\tau A^m B_n^{m-j}e^{(1-\frac  m2)\san}$
   &$(m-j)C_{m,j}e^{\frac \tau 2}A^{m-1} B_n^{m-1-j}e^{(1-\frac  m2)\san}$
   &$jC_{m,j}e^{\frac \tau 2} A^{m-1} B_n^{m-j}e^{(1-\frac   m2)\san}$\\
\hline
&&&\\
$C_{m+1,j}$&$C_{m+1,j}e^\tau A^{m+1} B_n^{m+1-j}e^{\frac {1-m}2 \san}$&$(m+1-j)C_{m+1,j} e^{\frac \tau 2}A^m B_n^{m-j}e^{\frac {1-m}2 \san}$&$jC_{m+1,j}e^{\frac \tau 2} A^m B_n^{m+1-j}e^{\frac {1-m}2 \san}$\\
\hline
&&&\\
$C_{k,j}$&$C_{k,j}e^\tau A^k B_n^{k-j}e^{(1-\frac
           k2)\san}$&$(k-j)C_{k,j}e^{\frac \tau 2}A^{k-1} B_n^{k-1-j}e^{(1-\frac
                      k2)\san}$&$jC_{k,j}e^{\frac \tau 2} A^{k-1} B_n^{k-j}e^{(1-\frac
                                 k2)\san}$\\
\hline
&&&\\
$\bar v_b(y_b,\san+\tau)$ &$\bar v_{b,0,0}(\san+\tau)$&$\bar v_{b,0,1}(\san+\tau)$  &$\bar v_{b,1,0}(\san+\tau)$\\ 
\hline
\end{tabular}
%\end{center}

\bigskip

\noindent with the rest terms satisfying
\begin{equation}\label{sizerest}
|\bar v_{b,i,j}(y_b,\san)|\le C(A,i,j) \san e^{(2-m) \san},
\end{equation}
with the same argument as for \eqref{vb00}.

\medskip

Now, proceeding by contradiction, we assume that $\theta$ is not
rational. Let us first explain our argument. As in the case $m=4$, the
contradiction will follow from the behavior of one of the 3 components
shown in the table above.
In fact, since the multilinear form
in \eqref{defmult}
%in \eqref{defmultm}
is non zero,
there is $l=0,\dots,m$ such that $C_{m,l}\neq 0$. Therefore, it is
convenient to choose a component involving this $C_{m,l}$, hoping to
reach a contradiction. Since $l\in[0,m-4]$
from \eqref{do} and Step 1,
%by Step 1 of Part 1 and Step 1 of Part 2,
we focus on the first two components, since the third
misses $C_{m,0}$. More precisely, we will choose the second component,
namely $v_{b,1,0}(\san+\tau)$, since it involves lower powers of the small parameter $B_n$. Furthermore, we
need the coefficient of the term involving $C_{m,l}$, namely $A^{m-1}e^{\frac \tau 2} B_n^{m-1-l}e^{(1-\frac m2)\san}$, to be dominant with respect to the
error term whose size is shown in \eqref{sizerest}.
Since we have from \eqref{deftheta} that
\begin{equation}\label{nadin}
  A^{m-1}e^{\frac \tau 2} B_n^{m-1-l}e^{(1-\frac m2)\san}
\ge  A^{m-1}e^{\frac \tau 2} B_n^{m-1}e^{(1-\frac m2)\san}
  =
  A^{m-1}e^{\frac \tau 2}e^{(1-\frac
    m2-(m-1)\theta+o(1))\san}
  \end{equation}
 as $n\to \infty$, a sufficient condition for this is to have 
\begin{equation}\label{gregoire}
1-\frac m2 - (m-1)\theta >2-m, 
\mbox{ i.e. } \theta <\frac{m-2}{2(m-1)}.
\end{equation}
Accordingly, since $\theta\in[0,\frac 12]$ by \eqref{deftheta}, we
consider two cases in the following:

\medskip

\textbf{Case 1}: $\theta \in[0,\frac{m-2}{2(m-1)})$.
From the table
above together with \eqref{sizerest}, we consider the following
expansion:
\begin{equation}\label{sol}
  v_{b,1,0}(\san+\tau) = e^{\frac \tau 2}\left(
    \sum_{(k,j)\in H_1}e^{(1-\frac  k2)\san}A^{k-1}
    (k-j)C_{k,j}B_n^{k-1-j}
    +O(\san e^{(2-m)\san})\right),
\end{equation}
where
\begin{align}
  H_1=\{(k,j)\;\;|\;\;&m\le k \le 2m-3, \;\;0\le j\le k-1,\;\;\label{defh1}\\
  &C_{k,j}\neq
  0\mbox{ and }e^{(1-\frac  k2)\san}B_n^{k-1-j} \gg \san e^{(2-m) \san}\}.\nonumber
\end{align}
We first note that $H_1\neq \emptyset$, since it contains $(m,l)$, by
\eqref{nadin} and \eqref{gregoire}.
% Indeed, since the mutlilinear
% form in
% \eqref{defmult}
% %\eqref{defmult-m}
% is non-zero, there exists $j=0,\dots,m$ such
% that $C_{m,j}\neq 0$. Then, we note that
% $e^{(1-\frac  m2)\san}B_n^{m-1-j} \ge
% e^{(1-\frac  m2)\san}B_n^{m-1}\gg \san e^{(2-m) \san}$, where the last
% dominance follows from \eqref{gregoire} and the hypothesis in the title of this
% case. Hence, $(m,j) \in H_1$.\\ 
Second, the following non-codominance property between all
terms in the expansion \eqref{sol} allows us to conclude:
%%%%%%%%%%%%%%%%%%%%%%%%%%%%%%
%%%%%%%%%%%%%%%%%%%%%%%%%%%%%%
\begin{lem}[No codominance of terms in the expansion of
  $v_{b,1,0}(\san + \tau)$] \label{lemnocodom} Consider two terms  in
  the expansion
  \eqref{sol},
%  of $v_{b,1,0}(\san + \tau)$,
  say $(k-j)C_{k,j}A^{k-1} B_n^{k-1-j}e^{(1-\frac
    k2)\san}$ and
  $(k'-j')C_{k',j'}A^{k'-1} B_n^{k'-1-j'}e^{(1-\frac
    {k'}2)\san}$ with $C_{k,j}C_{k',j'}\neq 0$, $m\le k,k'\le 2m-3$, $0\le j\le k-1$, $0\le
  j'\le k'-1$, and  $(k,j) \neq(k',j')$. Then, one of the terms
  dominates the other as $n\to \infty$.
\end{lem}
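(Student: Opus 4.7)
The plan is to compare the two candidate dominant terms by computing their ratio and exploiting the standing assumption that $\theta$ is irrational to exclude any equality of orders.

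Setting $\Delta := (k-j)-(k'-j')$ and using the relation $B_n = e^{-(\theta+o(1))\san}$ from~\eqref{deftheta}, I would write the ratio of the two terms as
\[
\frac{(k-j)C_{k,j}A^{k-1}B_n^{k-1-j}e^{(1-k/2)\san}}{(k'-j')C_{k',j'}A^{k'-1}B_n^{k'-1-j'}e^{(1-k'/2)\san}} = c(A)\,\exp(\Lambda_n\,\san),
\]
where $c(A)=\tfrac{(k-j)C_{k,j}}{(k'-j')C_{k',j'}}A^{k-k'}\neq 0$ is independent of $n$ and the exponent coefficient satisfies
\[
\Lambda_n \longrightarrow \Lambda := -\theta\,\Delta - \tfrac{k-k'}{2}.
\]
Consequently, as long as $\Lambda\neq 0$, the ratio tends either to $0$ or to $\infty$, so one of the two terms dominates the other.

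It then remains to rule out the degenerate case $\Lambda=0$. If $\Delta\neq 0$, then solving $\Lambda=0$ gives $\theta = -(k-k')/(2\Delta) \in \mathbb{Q}$, which contradicts the standing hypothesis that $\theta$ is irrational. If instead $\Delta=0$, then $\Lambda=0$ forces $k-k'=0$, i.e.\ $k=k'$; substituting back into the definition $\Delta=(k-j)-(k'-j')=j'-j$ yields $j=j'$, in contradiction with $(k,j)\neq(k',j')$. In either case we reach a contradiction, hence $\Lambda\neq 0$ and the lemma follows.

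The argument is essentially linear algebra on exponents of $\san$, and no step is expected to present a serious obstacle. The only mild care concerns the absorption of the $o(1)$ perturbation of $\theta$ inside $\Lambda_n$: since $\Delta$ ranges over a fixed finite set of integers (coming from the finite range $m\le k,k'\le 2m-3$ and $0\le j\le k-1$, $0\le j'\le k'-1$) and $|\Lambda|>0$ in the only relevant case, $\Lambda_n$ keeps the sign of $\Lambda$ for $n$ large enough, so the ratio indeed tends to $0$ or to $\infty$ as claimed.
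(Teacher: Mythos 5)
Your argument is correct and follows essentially the same route as the paper: both reduce the comparison to the exponent $\Lambda=-\theta\Delta-\tfrac{k-k'}2$ (the paper phrases it as the difference of the two exponents $1-\tfrac k2-(k-1-j)\theta$) and then use the injectivity of $(k,j)\mapsto(k,k-j)$ together with the irrationality of $\theta$ to rule out $\Lambda=0$. The only cosmetic difference is that you compute the ratio explicitly and isolate $\Lambda_n\to\Lambda$, which spells out why the $o(1)$ in \eqref{deftheta} is harmless, a point the paper leaves implicit.
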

%%%%%%%%%%%%%%%%%%%%%%%%%%%%%% 
%%%%%%%%%%%%%%%%%%%%%%%%%%%%%%
\noindent Indeed, if this lemma holds, recalling that $H_1$ is a non empty finite set, we may consider $(\bar k, \bar j)\in H_1$ such that
\begin{equation}\label{cesaria}
v_{b,1,0}(\san+\tau) \sim e^{\frac \tau 2}(\bar k-\bar j)C_{\bar
  k,\bar j}A^{\bar k-1} B_n^{\bar k-1-j}e^{(1-\frac
    {\bar k}2)\san}
\end{equation}
as $n\to \infty$, with $C_{\bar k, \bar j}\neq 0$.
From the growth factor $e^{\frac \tau 2}$, the coordinate
$v_{b,1,0}(\san+\tau)$ will grow, and a contradiction follows as
usual, both for the formal and the rigorous argument. It remains then to prove
 Lemma \ref{lemnocodom}.
\begin{proof}[Proof of Lemma \ref{lemnocodom}]
  Using the expansion of $B_n$ in \eqref{deftheta}, we see that
  \begin{equation}\label{thomas}
(k-j)C_{k,j}A^{k-1} B_n^{k-1-j}e^{(1-\frac  k2)\san}
= (k-j)C_{k,j}A^{k-1}  e^{(1-\frac
  k2-(k-1-j)\theta+o(1))\san}
\end{equation}
as $ n\to \infty$,
  with a similar expansion with $(k',j')$. Since $(k,j) \neq(k',j')$,
  it follows that $(k,k-j) \neq(k',k'-j')$. Therefore, recalling that $\theta$ is not
  rational (this is in fact the contradiction hypothesis), it follows that
  \[
1-\frac k2-(k-1-j)\theta \neq 1-\frac {k'}2-(k'-1-j')\theta,
\]
  and the conclusion follows.
\end{proof}
\begin{nb} From this proof, we see that the hypothesis that $\theta$
  is not rational is too strong. In fact, the argument works whenever
  $\theta$ avoids the rationals which are of the form
  $\frac{k'-k}{2[(k-j)-(k'-j')]}$, which make a finite collection of
  numbers, due to the boundedness of the ranges where the parameters
  lay. This remark will show to be crucial
below,
  %in Step 5,
  while adapting the present
  step in order to derive the proof of item
(iii)
  %(ii)
  of Proposition \ref{proptheta}.
\end{nb}

\medskip

\textbf{Case 2}: $\theta \in[\frac{m-2}{2(m-1)}, \frac 12]$. It
happens that the argument of Case 1 works here, with small natural
adaptations.\\
The first problem is that the terms in the expansion \eqref{sol} may
all be dominated by the error term, therefore, we need to go
further in the Taylor expansion \eqref{expv0max}, and this is possible
thanks to Proposition \ref{propexp}.
The question then is to know  the order 
up to which we carry
  on the Taylor expansion.

  \medskip

  To find  that, note that focusing on terms with  $C_{k,j}\neq 0$ is a convenient way to have a relevant term in the expansion of $v_{b,1,0}(\san+\tau)$.
  Such a term exists with $(k,j)=(m,l)$, since
  we know that the multilinear form in
\eqref{defmult}
  %\eqref{defmult-m}
is non-zero. It remains then to guarantee that the order in front of
$C_{m,l}$, namely $e^{(1-\frac m2)\san}B_n^{m-1-l}$, is dominant with
respect to the error term. Since $l\ge 0$, hence $e^{(1-\frac
  m2)\san}B_n^{m-1-l}\ge  e^{(1-\frac m2)\san}B_n^{m-1}=
e^{(1-\frac m2 - (m-1) \theta+o(1))\san}$ as $n\to \infty$ by \eqref{deftheta},
 we simply need to refine the 
the Taylor expansion \eqref{expv0max} of $v_0(y,s)$ up to the order
$O(s^\gamma e^{(1-\frac{M+1}2)s})$ for some $\gamma>0$, with $M\in \m
N$ given by
\begin{equation}\label{defM}
M= \lceil (2\theta+1)(m-1)\rceil,
\end{equation}
where the notation $\lceil \rceil$ stands for the ceiling (or upper
integer part) of a given number. This is possible,
thanks to Proposition \ref{propexp}.
However, in comparison with the Taylor
expansion given in \eqref{expv0max}, now we will see ``resonant''
terms, of order $s^i e^{(1-\frac k2)s}$, the first among them occurs
  at $k=2m-2$, and corresponds to the effect of the quadratic term in
  \eqref{eqv}. More precisely, that term is of order $se^{2-m}$. 
  \medskip

  Now, using the geometric transformation given in Step 2
of Section \ref{sectaylor},
  we may write an expansion of $v_b(y_b,\san+\tau)$ analogous to
  \eqref{vb20}. We may also write a table similar to the one right before
  \eqref{sizerest}, giving an expansion for $v_{b,1,0}(\san+\tau)$
  analogous to \eqref{sol}, and which shows resonant orders, as
  follows:
 \begin{align}
  v_{b,1,0}(\san+\tau) = e^{\frac \tau 2}\left(
    \sum_{(k,j,i)\in H_2}\right.&(\san+\tau)^i e^{(1-\frac  k2)\san}A^{k-1}
    (k-j)\tilde C_{k,j,i}B_n^{k-1-j}\nonumber\\
                                &\left.+ O(\san^\gamma e^{(\frac{1-M}2)\san})\right), \label{sol2}
\end{align}
where $H_2\subset \m N^3$ is a natural adaptation of the set $H_1$ 
\eqref{defh1}
%defined by
as follows:
\begin{align}
  H_2=\{(k,j,i)\;\;|\;\;&m\le k \le M, \;\;0\le j\le k-1,\;\;
  0\le i \le i_k,\label{defh2}\\
  &\tilde C_{k,j,i}\neq
  0\mbox{ and }\san^i e^{(1-\frac  k2)\san}B_n^{k-1-j} \gg \san^\gamma e^{(\frac{1-M}2) \san}\},\nonumber
\end{align}
where $i_k \in \m N$ is positive only at resonant orders.
% Note
% that when $i=0$, the coefficient $\tilde C_{k,j,0}=C_{k,j}$ is
% what we call a ``linear'' coefficient, since it is generated by the
% linear part of equation \eqref{eqv}, whereas for $i \ge 1$, the
% cofficient $\tilde C_{k,j,i}$ is generated by the nonlinear part
% of equation \eqref{eqv}, hence depends (at least quadratically) on other ``linear'' terms, as
% one can see
% in the Taylor expansion \eqref{taylor2-} given when $m=4$.
% %from Step 11 in Part 2 of Section \ref{sec0} dedicated to the case $m=4$.

  \medskip

  As in Case 1, we first note that $H_2$ is non empty, thanks to the
  choice of the order $M$ \eqref{defM} up to which we made the Taylor
  expansion. Furthermore,  the non-codominance property holds here too!

  \medskip
  
  Before proving that, we would like to comment on the time interval
  where we investigate the codominance property. In Case 1, all the
  terms in the expansion \eqref{sol} are multiples of the sole
  function $e^{\frac \tau 2}$. Comparing them at $\tau=0$ or on any
  subinterval of $[0, \infty)$ gives the same order. Here in Case 2,
  we have resonant terms, namely multiples of $(\san +\tau)^i e^{\frac
    \tau 2}$ with $i\in \m N$, and the comparison
  at $\tau=0$ or on a larger interval may be different, depending on
  the coefficient in front of the function and also on 
  the size of the interval. For that reason, we need to clearly fix
  some interval where we make the comparison. The most natural choice
  is simply the time interval of validity of the expansion
  \eqref{sol2}, namely, the interval where all the functions stay less
  than some fixed small $\delta_0>0$. More precisely, given some
  $(k,j,i)\in H_2$, we take $\tau \in [0, \tau_{k,j,i.n}]$ such that
  \[
 (\san+\tau_{k,j,i,n})^i e^{\frac {\tau_{k,j,i,n}} 2}e^{(1-\frac  k2)\san}A^{k-1}(k-j)\tilde
 C_{k,j,i}B_n^{k-1-j}=\delta_0.
\]
Since the function $\tau \mapsto (\san +\tau)^i e^{\frac \tau 2}$ is
increasing, we clearly have from \eqref{deftheta} that
$\tau_{k,j,i,n}\to \infty$ as $n\to \infty$, and 
%From \eqref{deftheta}, we see that
\[
  \tau_{k,j,i,n}\sim [k-2+2\theta(k-1-j)]\san
 \mbox{ as }n\to \infty
\]
(note that the coefficient of $\san$ is positive, since $\theta\ge 0$
and $k\ge m \ge 6$).\\
Since $6\le m \le k\le M$ and $\theta\in [0, \frac 12]$, it follows that
$k-2+2\theta(k-1-j) \in [k-2, 2k-3]\subset [m-2, 2M-3]$, hence
\[
\tau_{k,j,in}\le (2M-2)\san
\]
for $n$ large enough. Thus, we will consider $\tau \in [0,(2M-2)\san]$. On
that interval, we see that the resonant function $(\san+\tau)^ie^{\frac
  \tau 2}$ is comparable to a pure exponential function, in the sense
that
\[
  \forall \tau \in [0, (2M-2)\san],\;\;
  \san^i e^{\frac \tau 2} \le (\san+\tau)^ie^{\frac \tau 2}
  \le (2M-2)^i  \san^i e^{\frac \tau 2}.
\]
Thus, the codominance property can be checked at $\tau =0$, as we
did in Case 1. 

\medskip

Consider then two different $(k,j,i)$ and
  $(k',j',i')$ in $H_2$, and let us show that either
  $(\san+\tau)^i e^{\frac \tau 2}e^{(1-\frac  k2)\san}B_n^{k-1-j}$ or 
 $(\san+\tau)^{i'} e^{\frac \tau 2} e^{(1-\frac {k'}2)\san}B_n^{k'-1-j'}$
 dominates the other, for $n$ large and $\tau=0$, which is legitimate,
 from the reduction we have just proved above.
 Note first from
%\eqref{chafik}
\eqref{deftheta}
 that
 \[
   \san^i e^{(1-\frac  k2)\san}B_n^{k-1-j}
   =\san^i e^{(1-\frac  k2-(k-1-j)\theta+o(1)))\san},
 \]
 % \[
 %   (\san+\tau)^i  e^{\frac \tau 2}e^{(1-\frac  k2)\san}B_n^{k-1-j}
 %   =(\san+\tau)^i e^{\frac \tau 2} e^{(1-\frac  k2-(k-1-j)\theta))\san}\varphi_n^{k-1-j},
 % \]
 as $n\to \infty$,
 %uniformly for $\tau \in[0, A \san]$,
 with a similar estimate for $(k',j',i')$.\\
 %If $(k,j)=(k',j')$, then $i\neq i'$, and the difference in the power
% of $\san+\tau$ implies that one dominates the other.\\
If $(k,j)\neq (k',j')$, then $(k,k-j)\neq (k',k'-j')$, hence  $1-\frac
k2-(k-1-j)\theta\neq  1-\frac  {k'}2-(k'-1-j')\theta$, since $\theta$
is not rational.
Taking $n$ large enough, we
%Taking $\tau$ of order $B \san$ for some $B>0$, we
see that one term dominates the other.\\
Now, if $(k,j)=(k',j')$, then $i\neq i'$, and the two terms are
different by the power of $\san$ (their ratio is exactly $\san^{i-
  i'}$), and this implies that
one term dominates the other. Thus, we see that the non-codominance property
holds in Case 2 too. \\
Since the set $H_2$ is finite and non empty, we may consider $(\bar k,
\bar j, \bar i)\in H_2$ such that
\[
v_{b,1,0}(\san+\tau) \sim (\san+\tau)^{\bar i}e^{\frac \tau 2}(\bar k-\bar
j)\tilde C_{\bar
  k,\bar j, \bar i}A^{\bar k-1} B_n^{\bar k-1-j}e^{(1-\frac
    {\bar k}2)\san}
\]
as $n\to \infty$, with $\tilde C_{\bar k, \bar j, \bar i}\neq 0$.
From the growth factor $e^{\frac \tau 2}$, the coordinate
$v_{b,1,0}(\san+\tau)$ will grow, and a contradiction follows as usual.

  \medskip
  
  Of course, our argument in Cases 1 and 2 is formal, however, it can be made rigorous as
  usual, like we did at the end of
Part 1 of Subsection \ref{P3} in the case where $m=4$.
  %Step 1 in Part 2 in Section \ref{sec0}.
  This finishes the proof of item (i) in Proposition \ref{proptheta}.

\bigskip

\noindent - \textit{Proof of item
(ii)
  % (i)bis
  of Proposition
  \ref{proptheta}}.

The result comes from a small modification of the argument of the
proof of item (i). Assume by contradiction that $\theta$ defined in
\eqref{deftheta} is zero. All the argument of
the proof of item (i)
%Step 4
holds here, and
we naturally fall in Case 1. In particular \eqref{sol} holds and the
finite set $H_1$ defined in \eqref{defh1} is non empty. It remains
just to check the non dominance property stated in Lemma
\ref{lemnocodom}. Let us consider $(k,j)$ and
$(k',j')$ with $C_{k,j}C_{k',j'}\neq 0$, $m\le k,k'\le 2m-3$, $0\le j\le k-1$, $0\le
j'\le k'-1$, and  $(k,j) \neq(k',j')$, and show that either
$(k-j)C_{k,j}A^{k-1} B_n^{k-1-j}e^{(1-\frac  k2)\san}$ or
$(k'-j')C_{k',j'}A^{k'-1} B_n^{k'-1-j'}e^{(1-\frac {k'}2)\san}$
dominates the other .\\
If $k\neq k'$, then this is clear from \eqref{thomas}.\\
If $k=k'$, then, recalling that $(k,j) \neq(k',j')$, we necessarily
see that $j\neq j'$, hence the power of $B_n$ is not the same in the
two terms. Since $B_n\to 0$ from
\eqref{sizebn0},
%\eqref{onem} and the definition \eqref{defbn-} of $B_n$,
one term dominates the other. Thus, Lemma
\ref{lemnocodom} holds here too, and one can carry on the argument of
the proof of item (i)
% Step 4
to derive that \eqref{cesaria} holds, which yields a
contradiction from the exponential factor. Thus, $\theta \neq 0$.

\bigskip

- \textit{Proof of item
(iii)
  %(ii)
  of Proposition \ref{proptheta}}.
  
In this step, we explain how to derive item
(iii)
%(ii)
of Proposition \ref{proptheta} from the proof of item (i).

 As announced earlier, our argument is a small adaptation of the
 argument already used
 %in Step 4
 for the proof of item (i).
The key
idea for the adaptation was already mentioned in the remark following the proof
of Lemma \ref{lemnocodom}: having a non rational $\theta$ is a too
strong condition to guarantee non-codominance.
according to that remark and to the two cases mentioned in the proof
of item (i), we immediately see that $\theta$ should avoid the
following two sets, in order for the contradiction argument to work:
  \begin{align}
E_1= [0, \frac{m-2}{2(m-1)}]\cap & \{ \frac{k'-k}{2[(k-j)-(k'-j')]} \;\; | \;\;  m\le
                                                                                      k,k'\le 2m-3,\label{defE1}\\
    &0\le j\le k-1,\;\;0\le j'\le k'-1,
    (k,j)\neq (k',j') \mbox{ and }k-j\neq k'-j'
                 \}.\nonumber\\
E_2= [\frac{m-2}{2(m-1)}, \frac 12]\cap & \{ \frac{k'-k}{2[(k-j)-(k'-j')]} \;\; | \;\; m\le
                                                                                      k,k'\le 2m-2,\label{defE2}\\
    &0\le j\le k-1,\;\;0\le j'\le k'-1,
    (k,j)\neq (k',j') \mbox{ and }k-j\neq k'-j'
                 \}.\nonumber
  \end{align}
  This concludes the proof of Proposition \ref{proptheta}.
  \end{proof}

\bigskip

\textbf{Step 3: Polynomial corrections to the exponential decay of $B_n$}.

In Step 2,
%\sout{Step 3}, 
we showed that
\begin{equation}\label{chafik}
  B_n = e^{-\theta \san}\varphi_n\mbox{ where }\varphi_n=e^{o(\san)} \mbox{ as }n\to \infty,
\end{equation}
for some $\theta\in (0,\frac 12] \cap \m Q$ enjoying only a finite
number of values. As one recalls from the
proof, our starting point in the proof lays in the
Taylor expansion of the solution $v_0(y,s)$ provided by Proposition
\ref{propexp}. As that proposition allows the existence of
``resonant'' terms, of the type $s^i e^{(1-\frac k2)s}$, it is natural
  to expect that the $o(\san)$ in \eqref{chafik} is of order $\alpha \log
  \san$ for some $\alpha \in \m R$, resulting in $\varphi_n \sim
  \san^\alpha$. This is precisely the aim of this
step.
  %part.
  Let us then
  assume that for up some subsequence denoted the same, we have
  \begin{equation}\label{amira}
\frac{\log \varphi_n}{\log \san}\to \alpha \in [-\infty, +\infty].
\end{equation}
We claim the following:
%%%%%%%%%%%%%%%%%%%%%%%%%%%%%%%%%
%%%%%%%%%%%%%%%%%%%%%%%%%%%%%%%%%
\begin{prop}\label{propalpha}$ $\\
  (i) If $\theta \in [0, \frac{m-2}{2(m-1)})$, then $\alpha =0$.\\
  (ii) If $\theta \in [\frac{m-2}{2(m-1)}, \frac 12]$, then $\alpha \in E_3$
  defined
  % in \eqref{defE3}.
  by
\begin{align}\label{defE3}
  E_3\equiv \left\{\frac{i'-i}{k-j - (k'-j')}\;|\;m\le k,k'\le M,\;
  0\le j \le k-1,\; 0 \le j' \le k'-1,\; \right.\\
  \left.0\le i \le i_k,\;
  0\le i'\le i_{k'}, (k,j,i)\neq (k',j',i')\mbox{ and } k-j\neq k'-j'\right\},\nonumber
\end{align}
where $M$ is defined in \eqref{defM} and $i_k$ right after \eqref{defh2}.
\end{prop}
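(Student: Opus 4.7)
The plan is to mimic the argument used to prove Proposition \ref{proptheta}, now tracking polynomial corrections in $\san$. After a further extraction of a subsequence we may write
\[
B_n = e^{-\theta \san}\, \san^\alpha\, \psi_n, \qquad \psi_n = e^{o(\log \san)}\mbox{ as } n\to\infty.
\]
I would substitute this into the expansion of $v_{b,1,0}(\san+\tau)$ derived in Step 2: expansion \eqref{sol} in the non-resonant range $\theta\in[0,\frac{m-2}{2(m-1)})$, and expansion \eqref{sol2} in the resonant range $\theta\in[\frac{m-2}{2(m-1)},\frac12]$ (the latter requires refining the Taylor expansion of $v_0$ up to the order $M$ in \eqref{defM}, which Proposition \ref{propexp} provides). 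A generic term of the expansion,
\[
(k-j)\tilde C_{k,j,i}A^{k-1}\san^i B_n^{k-1-j}e^{(1-k/2)\san}e^{\tau/2},
\]
then becomes
\[
(k-j)\tilde C_{k,j,i}A^{k-1}\san^{\,i+(k-1-j)\alpha}\psi_n^{k-1-j}e^{(1-k/2-(k-1-j)\theta)\san}e^{\tau/2}.
\]

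The codominance analysis now splits into two layers. First, the exponential rates $1-k/2-(k-1-j)\theta$ group the (finitely many) terms with $\tilde C_{k,j,i}\ne 0$ into equivalence classes; this grouping is non-trivial precisely because $\theta$ has been forced into the rational set $E_1\cup E_2$ by Proposition \ref{proptheta}. Second, within an exponential class, two distinct admissible triples $(k,j,i)$ and $(k',j',i')$ have comparable polynomial sizes if and only if their polynomial powers agree,
\[
i+(k-1-j)\alpha = i'+(k'-1-j')\alpha,
\]
which, whenever $k-j\ne k'-j'$, forces $\alpha=(i'-i)/((k-j)-(k'-j'))$. If $\alpha$ does not belong to the finite list so generated, then the exponential class of maximal rate contains a unique term of maximal polynomial power; that term is asymptotically equivalent to $v_{b,1,0}(\san+\tau)$, and its $e^{\tau/2}$ growth contradicts the uniform bound \eqref{uij}, by the same ODE integration scheme used in Subsections \ref{P3} and \ref{P4}.

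For item (i), in the non-resonant range only $i=i'=0$ occur, so the codominance condition reduces to $\alpha=0$. For item (ii), the resonant indices $i,i'$ may be positive up to $i_k$, and running the formula over all admissible pairs yields exactly the finite set $E_3$ of \eqref{defE3}. The main obstacle, as in the $m=4$ analysis of Subsection \ref{P4}, is the rigorous justification of the formal codominance step: one has to integrate the ODE \eqref{eqvij} for $v_{b,1,0}$ over a time interval of length $O(\san)$, keeping the quadratic contribution strictly below the linear term $\frac12 v_{b,1,0}$. In the spirit of Propositions \ref{propexp4} and \ref{propexp5}, I would obtain the necessary $L^4_\rho$ control on $v_{a_n}$ up to the order $O(\san^\gamma e^{(1-M/2)\san})$ by an iteration: at each step, subtract the next component $\tilde C_{k,j,i}\san^i e^{(1-k/2)s}h_{k-j}h_j$ from $v_{a_n}$ and integrate the residual via Proposition \ref{propunif}. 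A secondary technical point is the handling of resonant factors $(\san+\tau)^i$ versus $\san^i$ on the long time interval $\tau\in[0,(2M-2)\san]$, treated as in Case 2 of the proof of Proposition \ref{proptheta}, where the two are comparable up to constants depending on $M$.
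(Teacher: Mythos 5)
Your proposal follows the paper's strategy closely: mimic the codominance argument from the proof of Proposition \ref{proptheta}, substitute the refined form of $B_n$, and observe that non-codominance (forced whenever $\alpha$ avoids a specific finite set of rationals) yields a dominant term whose $e^{\tau/2}$ growth contradicts the uniform bound. The explicit extraction of the codominance condition $i+(k-1-j)\alpha=i'+(k'-1-j')\alpha$ and the remark that within an exponential class two distinct triples with $k-j=k'-j'$ coincide (hence $k-j\neq k'-j'$ and $\alpha$ is pinned) are exactly what the paper means by its ``careful check'' at the end of the proof of Proposition \ref{propalpha}.

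There is, however, a small logical gap at the very start. You write
\[
B_n = e^{-\theta\san}\,\san^{\alpha}\,\psi_n,\qquad \psi_n=e^{o(\log\san)},
\]
and then analyze $\alpha$. But the limit $\alpha$ defined in \eqref{amira} takes values in $[-\infty,+\infty]$, and the factorization above is only meaningful when $\alpha$ is finite; for $\alpha=\pm\infty$ no power of $\san$ can be separated from $\varphi_n$. The paper treats this as a separate first step: it assumes $\alpha=\pm\infty$, observes (in both Case 1 and Case 2) that within any fixed exponential class any two distinct admissible triples have different powers of $\varphi_n$, and that since $\varphi_n$ then dominates or is dominated by any polynomial factor in $\san$, non-codominance holds and the usual contradiction rules out $\pm\infty$. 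Only after that is $\alpha$ finite and your decomposition valid. Your argument in fact contains all the ingredients needed to close this gap (one simply notes that $\pm\infty$ lies outside the finite set $E_3$ and that the $\varphi_n$-factor dominates any polynomial), but as written it silently presupposes finiteness.

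On the rigorous side, you correctly identify that the codominance argument must be justified by integrating the ODE \eqref{eqvij} over an interval of length $O(\san)$ with a sufficiently fine $L^4_\rho$ control on $v_{a_n}$, and you propose an iterative refinement of Propositions \ref{propunif}, \ref{propexp4}, \ref{propexp5} to the order $O(\san^\gamma e^{(1-M/2)\san})$. The paper does not elaborate on this either (it simply says the formal argument ``can be made rigorous as usual''), so your sketch is at the same level of detail.
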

%%%%%%%%%%%%%%%%%%%%%%%%%%%%%%%%% 
%%%%%%%%%%%%%%%%%%%%%%%%%%%%%%%%%
\begin{proof}
% This parameter $\alpha$ will be investigated in the two following
% steps:\\
% - In Step 7, we show that $\alpha$ is finite.\\
% - In Step 8, we show that $\alpha$ enjoys a finite number of
%   rational values.
%
  We proceed in 2 steps:\\
  - We first show that $\alpha$ is finite.\\
  - Then, we show that $\alpha$ enjoys a finite number of rational values.

   \bigskip

  %  \textbf{ Step 7: The limit $\alpha$ in \eqref{amira} is
  % finite}

- \textit{Proof of the fact that $\alpha$ is finite}.

 Let us
assume by contradiction that
\begin{equation}\label{eitheror0}
  \mbox{either }\frac{\log \varphi_n}{\log \san} \to -\infty\mbox{ or
  } \frac{\log \varphi_n}{\log \san} \to \infty
  \mbox{ as }n\to \infty.
\end{equation}
for a subsequence still denoted the same. In particular, this implies
that
\begin{equation}\label{eitheror}
  \mbox{either }\varphi_n \to 0 \mbox{ or }\varphi_n \to \infty,
   \mbox{ as }n\to \infty.
 \end{equation}
 Our idea is to follow the pattern of
the proof of item (i) of Proposition \ref{proptheta},
 %Step 4,
 where we proved
that $\theta$ is rational. Naturally, we consider the two cases
mentioned in that step.

\medskip

\textbf{Case 1: $\theta \in [0, \frac{m-2}{2(m-1)})$}.
Even though $\theta$ is rational this time, the expansion \eqref{sol}
remains valid with the same finite set $H_1$ defined in \eqref{defh1}, which is non empty, for
the same reason. If we can show the non-codominance property as in
Lemma \ref{lemnocodom}, then, we are done. Let us then prove that
lemma, in this new setting. Of course, we need a different argument,
since $\theta$ is rational this time, and the issue concerns the
following order term, involving the limit $\alpha$ defined in \eqref{amira}. Consider then two terms  in the
expansion \eqref{sol} of
  $v_{b,1,0}(\san + \tau)$, say
  $(k-j)C_{k,j}A^{k-1} B_n^{k-1-j}e^{(1-\frac
    k2)\san}$ and
  $(k'-j')C_{k',j'}A^{k'-1} B_n^{k'-1-j'}e^{(1-\frac
    {k'}2)\san}$ with $C_{k,j}C_{k',j'}\neq 0$, $m\le k,k'\le 2m-3$, $0\le j\le k-1$, $0\le
  j'\le k'-1$, and  $(k,j) \neq(k',j')$.
  By definition \eqref{chafik} of $\varphi_n$, we see that
  \begin{equation}\label{des}
    (k-j)C_{k,j}A^{k-1} B_n^{k-1-j}e^{(1-\frac  k2)\san}
= (k-j)C_{k,j}A^{k-1}  e^{(1-\frac
  k2-(k-1-j)\theta))\san}\varphi_n^{k-1-j},
\end{equation}
with a similar expansion for $(k',j')$.\\
If $1-\frac k2 -(k-1-j) \theta \neq 1-\frac {k'}2 -(k'-1-j') \theta$,
%(and this may occur since $\theta$ is rational),
recalling that $\varphi_n = e^{o(\san)}$ as stated in \eqref{chafik},
we see that one of the two terms dominates the other.\\
Now, if $1-\frac k2 -(k-1-j) \theta = 1-\frac {k'}2 -(k'-1-j')\theta$
(and this may occur since $\theta$ is rational),
recalling that $(k,j)\neq (k',j')$ we necessarily have
$k-1-j\neq k'-1-j'$. In other words, the power of $\varphi_n$ is not
the same in the two terms. Since $\varphi_n\to 0$ or $\varphi_n \to
\infty$, as stated in \eqref{eitheror}, we see from the description
\eqref{des} that one of the two terms has to
dominate the other. Thus,
the non-codominance property holds,
%Lemma \ref{lemnocodom} holds,
and a contradiction follows as in Case 1 of
the proof of item (i) of Proposition \ref{proptheta}.
%Step 4.

\medskip

\textbf{Case 2: $\theta \in [\frac{m-2}{2(m-1)}, \frac 12]$}. Again,
the argument of Case 2 in
the proof of item (i) of Proposition \ref{proptheta}
%Step 4
  holds, and we may derive an
expansion of $v_{b,1,0}(\san +\tau)$ like in \eqref{sol2}, with
``resonant'' terms. The finite set $H_2$ is non empty, for the same
reason. If we can show the non-codominance property as before, then we
are done.\\
Consider then two different $(k,j,i)$ and
  $(k',j',i')$ in $H_2$, and let us show that either
  $(\san+\tau)^i e^{\frac \tau 2}e^{(1-\frac  k2)\san}B_n^{k-1-j}$ or 
 $(\san+\tau)^{i'} e^{\frac \tau 2} e^{(1-\frac {k'}2)\san}B_n^{k'-1-j'}$
 dominates the other, for $n$ large. From the reduction we showed in
 Case 2 of
the proof of item (i) of Proposition \ref{proptheta}
 %Step 4
 above, it is enough to check the dominance at
 $\tau=0$. 
 % and also $\tau$ large, of order $A
 % \san$ for some $A>0$, which is the order where one expects a contradiction to occur,
 % as we already formally noted in \eqref{deftau-}, then rigorously after
 % Proposition \ref{propunif}, when $m=4$.\\
Note first from \eqref{chafik} that
 % \[
 %   (\san+\tau)^i  e^{\frac \tau 2}e^{(1-\frac  k2)\san}B_n^{k-1-j}
 %   =(\san+\tau)^i e^{\frac \tau 2} e^{(1-\frac  k2-(k-1-j)\theta+o(1)))\san},
 % \]
 \begin{equation}\label{elham}
   \san^i  e^{(1-\frac  k2)\san}B_n^{k-1-j}
   =\san^i  e^{(1-\frac  k2-(k-1-j)\theta))\san}\varphi_n^{k-1-j},
 \end{equation} 
 as $n\to \infty$,
 %uniformly for $\tau \in[0, A \san]$,
 with a similar estimate for $(k',j',i')$.\\
  If $1-\frac  k2-(k-1-j)\theta\neq  1-\frac  {k'}2-(k'-1-j')\theta$, 
 recalling that $\varphi_n = e^{o(\san)}$ from \eqref{chafik},
 we see that one term dominates the other.\\
  Assume then that  $1-\frac  k2-(k-1-j)\theta=  1-\frac
  {k'}2-(k'-1-j')\theta$.\\
If $k-j=k'-j'$, then $k=k'$, and since $(k,j,i)\neq (k',j',i')$, it
follows that $i\neq i'$. From the expression \eqref{elham}, we see
that the powers of $\san$ are different, hence, one term
dominates the other.\\
Now, if $k-j\neq k'-j'$, making the ratio between the two terms, we
find $\san^{i-i'}\varphi_n^{k-j-(k'-j')}$. Using
\eqref{chafik} and \eqref{eitheror}, 
%\eqref{eitheror0},
we see that one term dominates the other.

  \medskip
  
  Of course, our argument in Cases 1 and 2 is formal, however, it can be made rigorous as
  usual, like we did at the end of
Part 1 of Subsection \ref{P3} in the case $m=4$.
  %Step 1 in Part 2 in Section \ref{sec0}.
Thus, we
have just proved 
%know
that the parameter $\alpha$ defined in
\eqref{amira} is finite.

\bigskip

% \textbf{Step 8:  The limit $\alpha$ is \eqref{amira} enjoys only a
%   finite number of rational values}

- \textit{Conclusion of the proof of Proposition \ref{propalpha}}.

In the previous step, we assumed that $\alpha = \pm \infty$ and reached a
contradiction. In fact, a careful check reveals that the contradiction
can be reached in Case 1, for any $\alpha \neq 0$, whereas in Case 2,
we simply need $\alpha$ to avoid the set
$E_3$ defined in \eqref{defE3}.
% \begin{align}\label{defE3}
%   E_3\equiv \left\{\frac{i-i'}{k-j - (k'-j')}\;|\;m\le k,k'\le M,\;
%   0\le j \le k-1,\; 0 \le j' \le k'-1,\; \right.\\
%   \left.0\le i \le i_k,\;
%   0\le i'\le i_{k'}, (k,j,i)\neq (k',j',i')\right\},\nonumber
% \end{align}
% where $M$ is defined in \eqref{defM} and $i_k$ right after \eqref{defh2}.
% This way, we have just proved the following:
% %%%%%%%%%%%%%%%%%%%%%%%%%%%%%%%%%
% %%%%%%%%%%%%%%%%%%%%%%%%%%%%%%%%%
% \begin{prop}%\label{propalpha}
%   $ $\\
%   (i) If $\theta \in [0, \frac{m-2}{2(m-1)})$, then $\alpha =0$.\\
%   (ii) If $\theta \in [\frac{m-2}{2(m-1)}, \frac 12]$, then $\alpha \in E_3$
%   defined in \eqref{defE3}.
% \end{prop}
% %%%%%%%%%%%%%%%%%%%%%%%%%%%%%%%%% 
% %%%%%%%%%%%%%%%%%%%%%%%%%%%%%%%%%
This concludes the proof of Proposition \ref{propalpha}.
\end{proof}
  
%\bigskip

\textbf{
Step 4:
  % Step 9:
  One further refinement in the behavior of $B_n$}

So far, thanks to Propositions \eqref{proptheta} and
\eqref{propalpha}, we have proved that
\begin{equation}\label{defpsin}
  B_n = \exp(-\theta \san)\san^\alpha\psi_n
  \mbox{ for some }\psi_n = \san^{o(1)}
\end{equation}
as $n\to \infty$.
This is the aim of this step:
% In this Part, we will show that 
%%%%%%%%%%%%%%%%%%%%%%%%%
%%%%%%%%%%%%%%%%%%%%%%%%%
% \begin{cl}
\begin{prop} \label{climen} Up to a subsequence,
$\psi_n$ converges to some $L>0$, where $L$ enjoys a finite number of
values, all solutions of polynomials whose coefficients depend on
the coefficients that arise in the Taylor expansion of 
%$C_{k,j}$, the ``linear'' coefficients that arise in the expansion of the solution
$v_0(y,s)$.
%as in \eqref{expv0max}.
\end{prop}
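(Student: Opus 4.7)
The plan is to push the codominance argument from Propositions \ref{proptheta} and \ref{propalpha} one level further. Now that $\theta$ and $\alpha$ have been constrained to lie in the finite exceptional sets from those propositions, at least two triples $(k,j,i)$ in the table for $v_{b,1,0}(\san+\tau)$ are by construction \emph{codominant}, i.e.\ they produce the same exponential and polynomial $\san$-weight. The single-term contradiction arguments of the previous steps therefore fail; what they now prevent is precisely the nonvanishing of the coefficient-sum of the codominant block, and this vanishing condition will be the polynomial identity satisfied by the limit of $\psi_n$.

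First I would start from the Taylor expansion \eqref{expv0max}, refined via Proposition \ref{propexp} up to the order $M$ defined in \eqref{defM}, substitute \eqref{defpsin} into the generic entry of the table preceding \eqref{sizerest} (or its resonant analogue used for \eqref{sol2}), and rewrite the generic term of $v_{b,1,0}(\san+\tau)$ as
\[
(k-j)\tilde C_{k,j,i}\,A^{k-1}\,(\san+\tau)^i\,e^{\tau/2}\,\san^{\alpha(k-1-j)}\,e^{(1-\frac k2-(k-1-j)\theta)\san}\,\psi_n^{k-1-j}.
\]
Then I would define $H^\star$ as the set of triples $(k,j,i)$ maximizing the pair $\bigl(1-\frac k2-(k-1-j)\theta,\ i+\alpha(k-1-j)\bigr)$ in lexicographic order, and collect the leading asymptotic
\[
v_{b,1,0}(\san+\tau)\sim e^{\tau/2}\,\san^{\beta^\star}\,e^{\gamma^\star \san}\,P_A(\psi_n),\qquad P_A(X)=\sum_{(k,j,i)\in H^\star}(k-j)\tilde C_{k,j,i}\,A^{k-1}X^{k-1-j},
\]
with $\beta^\star,\gamma^\star$ the common $\san$-weights. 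By the defining property of $E_1$, $E_2$ and $E_3$, the set $H^\star$ has cardinality at least two, so $P_A$ is a genuine polynomial; for $A$ large, its coefficient of highest $A$-power is a single nonzero Taylor coefficient of $v_0$, so $P_A$ is nonzero.

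Next, I would choose $\tau=\tau_n$ so that the would-be leading expression has modulus equal to a fixed $\delta_0>0$ and, arguing as at the end of Part 1 of Subsection \ref{P3} (integrating the ODE \eqref{eqvij} on $[\san,\san+\tau_n]$ and bounding the quadratic source via Proposition \ref{propunif}, or its refined versions \ref{propexp4} and \ref{propexp5} when the quadratic contribution is not strictly subdominant), derive that a violation of \eqref{uij} would follow unless $P_A(\psi_n)\to 0$ as $n\to\infty$. Since $P_A$ has only finitely many complex roots, all depending on $A$ and on the Taylor coefficients of $v_0$ alone, one can extract a subsequence along which $\psi_n$ converges to a root $L$ of $P_A$. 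Letting $A$ vary afterwards and noting that $L$ must be a root of $P_A$ for every large $A$, one separates coefficients by powers of $A$ and obtains finitely many polynomial identities in $L$ whose coefficients depend only on the Taylor expansion of $v_0$.

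Finally, I would rule out $L=0$ and $L=+\infty$. If $\psi_n\to 0$ along some subsequence, the term of lowest $X$-degree in $P_A(X)$ would take over as the new leading contribution, which, inserted back into \eqref{defpsin}, would amount to replacing $(\theta,\alpha)$ by a strictly smaller effective value of $\alpha$; by the maximality used in Propositions \ref{proptheta} and \ref{propalpha}, this contradicts the fact that $(\theta,\alpha)$ was already the extremal pair. The case $\psi_n\to\infty$ is symmetric. Positivity $L>0$ then follows, since $\psi_n>0$ by \eqref{posm}, \eqref{onem} and \eqref{defpsin}. The hard part will be the rigorous ODE integration on an interval of length comparable to $\san$: one must make sure that the $A$-power of the codominant block strictly exceeds the $A$-powers appearing in both the quadratic source and the $O$-remainder, which forces an a priori choice of $A$ sufficiently large, exactly as was necessary in Claim \ref{clazzabi} and Proposition \ref{propexp5}.
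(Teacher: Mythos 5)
Your proposal follows the same strategy as the paper: collect the codominant block $H^\star$ (the paper's set $\bar E$), read off the polynomial factor it produces in the leading term of $v_{b,1,0}(\san+\tau)$, force that factor to vanish via the growth-factor/no-growth contradiction, and conclude that $\psi_n$ accumulates on roots of a polynomial built from the Taylor coefficients. The exclusion of $\psi_n\to 0$ and $\psi_n\to\infty$ is also the paper's mechanism (a single extremal-degree term of the block becomes strictly dominant, re-creating the non-codominance contradiction of Propositions \ref{proptheta} and \ref{propalpha}), though calling this a ``maximality'' of $(\theta,\alpha)$ misdescribes those propositions, which assert membership in finite exceptional sets, not extremality.

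The step that does not survive scrutiny is the last one: ``letting $A$ vary afterwards \dots one separates coefficients by powers of $A$.'' Recall that $\san=-2\log a_{n,2}+2\log A$ by \eqref{an2}, so $\psi_n=B_ne^{\theta\san}\san^{-\alpha}\sim A^{2\theta}\bigl(B_na_{n,2}^{-2\theta}\san^{-\alpha}\bigr)$ carries an explicit $A^{2\theta}$ prefactor. Consequently the limit ``$L$'' you extract is itself $A$-dependent, and the assertion ``$L$ must be a root of $P_A$ for every large $A$'' is unfounded; you cannot fix $L$ and vary $A$. Moreover, once one substitutes $\psi_n\sim A^{2\theta}L$ into $P_A$, the defining codominance relation of $H^\star$ forces all the combined exponents $(k-1)+2\theta(k-1-j)$ to coincide, so that $P_A(\psi_n)\sim A^{\mu}\sum_{(k,j,i)\in H^\star}(k-j)\tilde C_{k,j,i}L^{k-1-j}$ with a \emph{single} $A$-power $A^\mu$: there is nothing to separate. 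The paper avoids this entirely by the normalization $\psi_n\to A^{2\theta}L$ in \eqref{cv}, which makes the resulting polynomial in $L$ manifestly $A$-free. Your derivation of $P_A(\psi_n)\to 0$ is correct; it is only the final ``extract $A$-independent identities'' step that is wrong as written, and the normalization above is the fix. A smaller inaccuracy: the claim $|H^\star|\ge 2$ ``by the defining property of $E_1,E_2,E_3$'' is backwards, since those sets constrain only indices and not whether the Taylor coefficients $\tilde C_{k,j,i}$ vanish; the paper obtains $|\bar E|\ge 2$ a posteriori from the vanishing of the sum combined with $L\neq 0$ and $C_{k,j}\neq 0$.
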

%\end{cl}
%%%%%%%%%%%%%%%%%%%%%%%%% 
%%%%%%%%%%%%%%%%%%%%%%%%%
\begin{proof}
  Here again,
  we crucially use the geometric transformation
  %we follow the usual strategy, that we
  introduced in
  Step 2
  of Section \ref{sectaylor}.
We proceed in 2 steps:\\
- In Step (i),
we show that $\psi_n$ is bounded away from $0$ and from infinity.\\
 - In Step (ii),
we show that up to a subsequence,
$\psi_n$ converges to some $L>0$,  which is a solution of a polynomial
whose coefficients depend on $C_{k,j}$.

\bigskip

\textit{
  - Step (i):
%Step 10:
  $\psi_n$ is bounded away from $0$ and from infinity}.

We proceed by contradiction, and assume that for a subsequence (still
denoted the same), we have
\begin{equation}\label{eitheror2}
\psi_n \to 0\mbox{ or } \psi_n \to \infty\mbox{ as }n\to \infty.
\end{equation}
As before, we follow the strategy of
the proof of item (i) of Proposition \ref{proptheta}, with its two cases.\\
%Step 4.\\
Starting by Case 1, where $\theta \in [0, \frac{m-2}{2(m-1)})$, we
still see that \eqref{sol}
holds,
with $H_1$ which is still a non empty
finite set. It remains only to prove the non codominance
property.
Consider then two terms  in the
expansion \eqref{sol} of
  $v_{b,1,0}(\san + \tau)$, say
  $(k-j)C_{k,j}A^{k-1} B_n^{k-1-j}e^{(1-\frac
    k2)\san}$ and
  $(k'-j')C_{k',j'}A^{k'-1} B_n^{k'-1-j'}e^{(1-\frac
    {k'}2)\san}$ with $C_{k,j}C_{k',j'}\neq 0$, $m\le k,k'\le 2m-3$, $0\le j\le k-1$, $0\le
  j'\le k'-1$, and  $(k,j) \neq(k',j')$. Using \eqref{defpsin}, and
  recalling that $\alpha=0$ from Proposition \ref{propalpha}, we see that
  \begin{equation}\label{des2}
    (k-j)C_{k,j}A^{k-1} B_n^{k-1-j}e^{(1-\frac  k2)\san}
= (k-j)C_{k,j}A^{k-1}  e^{(1-\frac
  k2-(k-1-j)\theta)\san}\psi_n^{k-1-j},
\end{equation}
with a similar expansion for $(k',j')$.\\
If $1-\frac k2 -(k-1-j) \theta \neq 1-\frac {k'}2 -(k'-1-j') \theta$,
%(and this may occur since $\theta$ is rational),
recalling that $\psi_n = \san^{o(1)}=e^{o(\log \san)}$ as stated in \eqref{defpsin},
we see that one of the two terms dominates the other.\\
Now, if $1-\frac k2 -(k-1-j) \theta = 1-\frac {k'}2 -(k'-1-j')
\theta$, recalling that $(k,j)\neq (k',j')$ we necessarily have
$k-1-j\neq k'-1-j'$. In other words, the power of $\psi_n$ is not
the same in the two terms. Since $\psi_n\to 0$ or $\psi_n \to
\infty$, as stated in \eqref{eitheror2}, we see from the description
\eqref{des2}
%\eqref{des}
that one of the two terms has to
dominate the other. Thus,
co-dominance
% Lemma \ref{lemnocodom}
holds in this context, and a
contradiction follows as in Case 1 of
the proof of item (i) of Proposition \ref{proptheta}.\\
%Step 4.\\
Now, moving to Case 2, where $\theta\in [\frac{m-2}{2(m-1)}, \frac
12]$, we may consider two terms in the expansion \eqref{sol2} of
$v_{b,1,0}(\san + \tau)$, say
$(\san+\tau)^i e^{\frac \tau 2}e^{(1-\frac  k2)\san}B_n^{k-1-j}$ and
 $(\san+\tau)^{i'} e^{\frac \tau 2} e^{(1-\frac
   {k'}2)\san}B_n^{k'-1-j'}$, where $(k,j,i)$ and $(k',j',i')$ belong to
 $H_2$ defined in \eqref{defh2}, and prove that one dominates the
 other.
 From the reduction we did in Case 2 of
the proof of item (i) of Proposition \ref{proptheta},
 %Step 4,
 we may check the
 dominance only at $\tau=0$.
 % when $\tau \in [0, A \san]$, since the contradiction we are
 % looking for occurs at $\tau =A \san$, as one may see from
 % \eqref{deftau-} in the case $m=4$.\\
 Using \eqref{defpsin}, we see that
 \[
   \san^i e^{(1-\frac  k2)\san}B_n^{k-1-j}
   = e^{(1-\frac
     k2-(k-1-j)\theta)\san}\san^{i+\alpha(k-1-j)} \psi_n^{k-1-j}
 \]
 as $n\to \infty$, with a similar estimate for $(k',j',i')$.\\
 If $1-\frac k2 -(k-1-j) \theta \neq 1-\frac {k'}2 -(k'-1-j') \theta$,
%(and this may occur since $\theta$ is rational),
recalling that $\psi_n = \san^{o(1)}=e^{o(\log\san)}$ as stated in \eqref{defpsin},
we see that one of the two terms dominates the other.\\
Assume then that $1-\frac k2 -(k-1-j) \theta = 1-\frac {k'}2 -(k'-1-j')
\theta$.\\
If $i+\alpha(k-1-j)\neq i'+(k'-1-j')\alpha$, then, using again
the fact
that
$\psi_n = \san^{o(1)}=e^{o(\log\san)}$,  we see that the power of
$\san$ is different between the two terms, hence one term dominates the other.\\
Now, if $i+\alpha(k-1-j)=i'+(k'-1-j')\alpha$, then, necessarily $k-1-j\neq
k'-1-j'$, otherwise, $(k,j,i)=(k',j',i')$. Therefore, the power of $\san$ is
the same in the two terms,
unlike the power of $\psi_n$. Recalling that $\psi_n \to 0$ or $\psi_n
\to \infty$, we see that one term dominates the other.

\bigskip

\textit{Step (ii):
  $\psi_n$ converges (up to a subsequence) }

From Step (i),
%up to a subsequence,
we may assume that for some
subsequence (still denoted the same), we have
\begin{equation}\label{cv}
\psi_n \to A^{2\theta}  L\mbox{ as }n\to \infty
\end{equation}
for some $L>0$ (we write the limit in \eqref{cv} as $A^{2\theta}  L$
and not $L$, since in this form, $L$ will be shown to be a solution of
a polynomial whose coefficients are independant  of $A$). \\
We will
distinguish two cases as in
the proof of item (i) of Proposition \ref{proptheta}.\\
%Step 4.\\
\textbf{Case 1: $\theta \in [0, \frac{m-2}{2(m-1)})$}. Note that
$\alpha=0$ by Proposition \ref{propalpha}. In this case, we
see that \eqref{sol} still holds, with $H_1$ which is still a non empty
finite set. However, this time, we will have codominance, as we show
in the following. Since the number of terms is finite in the expansion
\eqref{sol}, we may consider $(\bar k, \bar j)\in H_1$ defined in
\eqref{defh1} such that the term corresponding to this parameter
dominates all the others. Using \eqref{sol} and \eqref{defpsin}, this
term reads
\begin{equation}\label{diams}
e^{(1-\frac{\bar k}2) \san} A^{\bar k-1} (\bar k - \bar j) C_{\bar
  k,\bar j} B_n^{\bar k -1 - \bar j} =
A^{\bar k-1} C_{\bar k,  \bar j} (\bar k - \bar j)
e^{(1-\frac{\bar k}2-(\bar k-1-\bar j)\theta)\san} \psi_n^{\bar k -1 - \bar j}.
\end{equation}
As we wrote earlier, we expect here to have
codominance. Let us then characterize the set $\bar E$ of all $(k,j)\in H_1$ such that the
corresponding term is of the same size as the term corresponding to
$(\bar k, \bar j)$. From \eqref{diams} and \eqref{cv}, this means that
\begin{equation}\label{nana}
1-\frac{\bar k}2-(\bar k-1-\bar j)\theta = 1-\frac k2-(k-1-j)\theta.
\end{equation}
This way, we may keep only the dominant terms in \eqref{sol}, namely
those coming from $\bar E$, and write
\[
 v_{b,1,0}(\san+\tau) = e^{\frac \tau 2}\left(
    \sum_{(k,j)\in \bar E}e^{(1-\frac  k2)\san}A^{k-1}
    (k-j)C_{k,j}B_n^{k-1-j}
                        +O(\san e^{(2-m)\san})\right).
\]
Using \eqref{diams} and \eqref{nana}, together with the convergence
\eqref{cv},
    %     and \eqref{nana},
we derive that
\begin{align*}
v_{b,1,0}(\san+\tau) &= e^{\frac \tau 2}  e^{(1-\frac{\bar
      k}2-(\bar k-1-\bar j)\theta)\san}
  \left( \sum_{(k,j)\in \bar E} A^{k-1}
    (k-j)C_{k,j}(A^{2\theta} L)^{k-1-j} +o(1) \right)\\
   & =  e^{\frac \tau 2}  e^{(1-\frac{\bar
        k}2-(\bar k-1-\bar j)\theta)\san}
     A^{\bar k-1+2\theta(\bar k-1-\bar j)}
  \left( \sum_{(k,j)\in \bar E}
    (k-j)C_{k,j}L^{k-1-j} +o(1) \right)
\end{align*}
as $n\to \infty$.
From the growth factor $e^{\frac \tau 2}$, this implies that
  \[
\sum_{(k,j)\in \bar E}  (k-j)C_{k,j}L^{k-1-j} =0.
\] 
Since $L\neq  0$ and $C_{k,j}\neq 0$, for all $(k,j)\in \bar E$
(remember that $\bar E\subset H_1$ defined in \eqref{defh1}), this
sum
%identity
contains at least two terms, and this is precisely the desired polynomial relation. Remember that $\bar E\neq\emptyset$, since it contains $(\bar k, \bar j)$, and that $C_{k,j}\neq
0$, for any $(k,j) \in \bar E$, since $\bar E\subset H_1$ defined in
\eqref{defh1}. Note that the degree of this
polynomial is bounded by $k-1\le 2m-4$ by definition \eqref{defh1} of
$H_1$, hence, we have at most $2m-4$ possible values for $L$.\\
\textbf{Case 2: $\theta \in [\frac{m-2}{2(m-1)}, \frac
  12]$}. We are then in the framework of Case 2 of
the proof of item (i) of Proposition \ref{proptheta}
%Step 4
above. In
particular, the finite set $H_2$
is still non empty, for the same reason. However,
% as we have just noted in Case 1 right above,
we may have codominance in this
context. Since the number of terms is finite in the expansion
\eqref{sol2}, we may consider $(\bar k, \bar j, \bar i)\in H_2$
defined in \eqref{defh2} such that the term corresponding to this
parameter dominates all the others. Following what we wrote in Case 2
of
the proof of item (i) of Proposition \ref{proptheta}
%Step 4
above, we may just take $\tau=0$ to discuss codominance
issues. 
Using \eqref{sol2} and
\eqref{defpsin}, this term reads as follows, when $\tau=0$:
\begin{align}
&\san^{\bar i}e^{(1-\frac {\bar k}2)\san}A^{\bar k-1}
    (\bar k-\bar j)\tilde C_{\bar k,\bar j,\bar i}B_n^{\bar k-1-\bar
      j}\label{diams2}\\
    =&  e^{(1-\frac {\bar k}2-\theta(\bar k - 1-
      \bar j))\san}\san^{\bar i+\alpha(\bar k -1- \bar j)} A^{\bar k-1}
    (\bar k-\bar j)  \tilde C_{\bar k,\bar j,\bar i} \psi_n^{\bar k
      -1- \bar j}.\nonumber
\end{align}
%As we wrote earlier, we expect to have codominance.
%uniformly for $\tau \in [0, B \san]$, for any fixed $B>0$.
Let us then
characterize the set $\bar E$ of all $(k,j,i)\in H_2$ such that the
corresponding term in \eqref{sol2} is of the same size as the term
corresponding to $(\bar k, \bar j, \bar i)$. From \eqref{diams2} and
\eqref{cv}, we see that we need to have
\begin{equation}\label{nana2}
  1-\frac {\bar k}2 - \theta(\bar k - 1 - \bar j) = 1- \frac k2 - \theta(k-1-j)
  \mbox{ and }
  \bar i + \alpha(\bar k -1- \bar j)= i+\alpha(k-1-j).
\end{equation}
This way, we keep only the dominant terms in \eqref{sol2}, and write
\begin{align*}
  v_{b,1,0}(\san+\tau) = e^{\frac \tau 2}\left(
    \sum_{(k,j,i)\in \bar E}\right.&(\san+\tau)^i e^{(1-\frac  k2)\san}A^{k-1}
    (k-j)\tilde C_{k,j,i}B_n^{k-1-j}\nonumber\\
    &\left.+O(\san^\gamma e^{(\frac{1-M}2)\san})\right). %\label{sol2}
\end{align*}
Using \eqref{diams2} and \eqref{nana2}, together with \eqref{cv}, we
write for all $\tau \in [0, (2M-2)\san]$ (note that this interval
refers to our discussion in Case 2
of the proof of item (i) of Proposition \ref{proptheta}):
%Step 4):
\begin{align*}
 v_{b,1,0}(\san+\tau) = &e^{\frac \tau 2}e^{(1-\frac{\bar k}2-
   \theta(\bar k -1- \bar j))\san}\san^{\bar i + \alpha(\bar k -1-
   \bar j)} A^{\bar k -1 +2\theta(\bar k -1 - \bar j)}\\
&\times \left(\sum_{(k,j,i)\in \bar E}\left(1+\frac \tau\san\right)^i(k-j) \tilde C_{k,j,i}L^{k-1-j}+o(1)\right).
\end{align*}
Now, for any $\delta_0>0$, we consider $\tau_n(\delta_0)>0$ such that
\[
e^{\frac \tau 2}e^{(1-\frac{\bar k}2-
   \theta(\bar k -1- \bar j))\san}\san^{\bar i + \alpha(\bar k -1-
   \bar j)} A^{\bar k -1 +2\theta(\bar k -1 - \bar j)}=\delta_0.
\]
Clearly, it holds that
\[
  \tau_n(\delta_0) \sim [\bar k - 2+\theta(\bar k -1 - \bar j)]\san
  \mbox{ as }n\to \infty.
\]
% we have the following expression:
% \[
%   \tau_n(\delta_0) = [\bar k - 2+\theta(\bar k -1 - \bar j)]\san
%   -[\bar i + \alpha(\bar k -1 - \bar j)]\log \san - [\bar k -1
%   +2\theta(\bar k -1 - \bar j)]\log A +\log \delta_0.
% \]
% \`A v\'erifier.
Therefore, we see that
\[
v_{b,1,0}(\san+\tau_n(\delta_0)) = \delta_0\left(\sum_{(k,j,i)\in \bar E}\left([\bar k - 1 +\theta(\bar k -1 - \bar j)\right)^i(k-j) \tilde C_{k,j,i}L^{k-1-j}+o(1)\right).
\]
This forces the coefficient of $\delta_0$ to be zero:
\[
\sum_{(k,j,i)\in \bar E}\left([\bar k - 1 +\theta(\bar k -1 - \bar j)\right)^i(k-j) \tilde C_{k,j,i}L^{k-1-j}=0,
\]
otherwise,
%we will
$v_{b,1,0}(\san+\tau_n(\delta_0))$ will be large as in \eqref{hadaf}, which leads to a contradiction.\\
Since $L\neq  0$ and $\tilde C_{k,j,i}\neq 0$, for all $(k,j,i)\in
\bar E$ (remember that $\bar E\subset H_2$ defined in \eqref{defh2}),
this identity contains at least two terms, and this is precisely the
desired polynomial relation.\\
This concludes the proof of
Proposition
%Claim
\ref{climen}
and finishes Part 1 dedicated to the super-quadratic case.
\end{proof}

\textbf{
Part 2:
 The quadratic regime}

% In Part 1,
% we dealt with super-quadratic regimes, where $\aa \gg \bb^2$ (see
% \eqref{noquadm}).
In this part, we consider the quadratic regime, where
\begin{equation}\label{quad0}
\aa \sim L \bb^2 \mbox{ as }n\to \infty,
\end{equation}
for some $L>0$, and show that $L$ satisfies a polynomial equation whose coefficients are given by the Taylor expansion of the solution. 

\medskip

Note first from the notation \eqref{an2} and the definition \eqref{defbn-} of $B_n$ that we have
\[
B_n \sim LA e^{-\frac{\san}2}\mbox{ as }n\to \infty.
\]
The proof is in fact a simple adaptation of our argument in the
super-quadratic regimes, given in
Part 1.
%Part 2.
Let us then follow
that part
%those parts
step by step, and see what changes.

\medskip

In
Part 1, 
%Part 2,
the outcome of Step 1 is the following:
\begin{equation}\label{asperger}
C_{m+1,m}+LC_{m,m-2}=0.
\end{equation}
If $C_{m,m-2}\neq 0$, then, we have our polynomial and we are done (in
fact, we have more, in the sense that $L$ enjoys only one value : $-
C_{m+1,m}/C_{m,m-2}$).\\
If  $C_{m,m-2}= 0$, then we also have $C_{m,m-3}=0$, because the
multilinear form in
\eqref{defmult}
%\eqref{defmult-m}
is nonpositive. In other words,
we have exactly the same conclusions as in Step 1 of
Part 1
%Part 2
in the
super-quadratic case (in fact, we have more, since $C_{m+1,m}=0$ from
\eqref{asperger}). For short, we can carry on all the next steps
of the super-quadratic case up to
to the end of Part 1,
%Step 11,
and see that $L$
satisfies a polynomial equations. For the reader's convenience, we
would like to mention that hypothesis \eqref{quad0} makes many steps
either non relevant or trivial:\\
- In Step 2 of Part 1,
%In Step 3 of Part 2,
$\theta=\frac 12$
which makes Proposition \ref{proptheta} non relevant. Moreover, 
%, and
\eqref{quad0} is stronger than \eqref{deftheta}.\\
%Then, Steps 4, 4bis and 5 are non relevant.
- In Step 3,
%- In Step 6,
estimate \eqref{quad0} is
stronger than \eqref{chafik}, and $\alpha$ defined in \eqref{amira} is
zero. Accordingly,
Proposition \ref{propalpha} is
%Steps 7 and 8 are
non relevant.\\
- As for
Step 4,
%Step 9,
again, estimate \eqref{quad0} is stronger than \eqref{defpsin},
and the first assertion of Proposition \ref{climen} is clear
($\psi_n\to L$ as $\to \infty$).\\
%and Step 10 is non relevant.\\
- In conclusion, only the second assertion of Proposition \ref{climen}  
%Step 11
remains
relevant, and provides us with the polynomial relation for $L$.

\bigskip

\textbf{
Part 3:
  %Part 4:
Conclusion of the proof of Theorem
\ref{cor0m}}
%\ref{th0m}}

From the study of the super-quadratic case in
Part 1,
%Part 2,
and
also the quadratic case in
Part 2,
%Part 3,
we see that either we are in the
subquadratic case $\aa = o(\bb)^2$, or
\[
  B_n \sim L A^{2\theta} e^{-\theta \san}\san^\alpha \mbox{ as }
  n\to \infty,
\]
where $\theta$ and $\alpha$ enjoy only a finite set of rational
values, and $L>0$ is a solution of a polynomial equations whose
coefficients depend on the Taylor expansion of the solution. By
definitions \eqref{an2} and \eqref{defbn-} of $\san$ and $B_n$, we see that
\[
\aa \sim L\bb^{2\theta+1}|2\log \bb|^{\alpha},
\]
with $2\theta+1\in (0,1]$, which is the desired estimate in
Theorem
\ref{cor0m}
%\ref{th0m}.
The set where $2\theta+1$ lives directly
follows from item
(iii)
%(ii)
in Proposition \ref{proptheta}.

\medskip

This concludes the proof of Theorem
\ref{cor0m}.
%\ref{th0m}.
\end{proof}

\def\cprime{$'$} \def\cprime{$'$}

%\bibliography{../ref}

\noindent{\bf Address}:\\
CY Cergy Paris Universit\'e, D\'epartement de math\'ematiques, 
2 avenue Adolphe Chauvin, BP 222, 95302 Cergy Pontoise cedex, France.\\
\vspace{-7mm}
\begin{verbatim}
e-mail: merle@math.u-cergy.fr
\end{verbatim}
Universit\'e Sorbonne Paris Nord, Institut Galil\'ee, 
Laboratoire Analyse, G\'eom\'etrie et Applications, CNRS UMR 7539,
99 avenue J.B. Cl\'ement, 93430 Villetaneuse, France.\\
\vspace{-7mm}
\begin{verbatim}
e-mail: Hatem.Zaag@univ-paris13.fr
\end{verbatim}
\end{document}